\def\titlerunning#1{\gdef\titrun{#1}}
\def\author#1{\gdef\autrun{\def\and{\unskip, }#1}\gdef\@author{#1}}
\def\address#1{{\def\and{\\\hspace*{18pt}}\renewcommand{\thefootnote}{}%
\footnote {#1}}%
\markboth{\autrun}{\titrun}}
\def\email#1{\hspace*{4pt}{\em e-mail}: #1}
\def\MSC#1{{\renewcommand{\thefootnote}{}%
\footnote{\emph{Mathematics Subject Classification (2020):} #1}}}
\def\keywords#1{\par\medskip
\noindent\textbf{Keywords:} #1}
\newtheorem{theorem}{Theorem}[section]
\newtheorem{prop}[theorem]{Proposition}
\newtheorem{cor}[theorem]{Corollary}
\newtheorem{lemma}[theorem]{Lemma}
\theoremstyle{definition}
\newtheorem{remark}[theorem]{Remark}
\numberwithin{equation}{section}
\def\s{\mathfrak s}
\def\cA{\mathcal A}
\def\cC{\mathcal C}
\def\cL{\mathcal L}
\def\cM{\mathcal M}
\def\cO{\mathcal O}
\def\cP{\mathcal P}
\def\cQ{\mathcal Q}
\def\cR{\mathcal R}
\def\cS{\mathcal S}
\def\cW{\mathcal W}
\def\cX{\mathcal X}
\def\cY{\mathcal Y}
\def\cZ{\mathcal Z}
\def\PG{{\rm PG}}
\def\F{{\mathbb F}}
\def\PGL{{\rm PGL}}
\def\PGO{{\rm PGO}}
\def\rk{{\rm rk}}
\def\Tr{{\rm Tr}}
\def\v{\boldsymbol v}
\def\u{\boldsymbol u}
\def\w{\omega}
\def\z{\zeta}
\begin{document}


\baselineskip=16pt

\titlerunning{}

\title{On the geometry of a $(q+1)$-arc of $\PG(3, q)$, $q$ even}

\author{Michela Ceria
\and
Francesco Pavese}

\date{}

\maketitle

\address{M. Ceria, F. Pavese: Dipartimento di Meccanica, Matematica e Management, Politecnico di Bari, Via Orabona 4, 70125 Bari, Italy; \email{\{michela.ceria, francesco.pavese\}@poliba.it}
}


\MSC{Primary 05E18. Secondary 05B25; 51E20; 51N35; 14N10.}

\begin{abstract}
In $\PG(3, q)$, $q = 2^n$, $n \ge 3$, let $\cA = \{(1,t,t^{2^h},t^{2^h+1}) \mid t \in \F_q\} \cup \{(0,0,0,1)\}$, with $\gcd(n,h) = 1$, be a $(q+1)$-arc and let $G_h \simeq \PGL(2, q)$ be the stabilizer of $\cA$ in $\PGL(4, q)$. The $G_h$-orbits on points, lines and planes of $\PG(3, q)$, together with the point-plane incidence matrix with respect to the $G_h$-orbits on points and planes of $\PG(3, q)$ are determined. The point-line incidence matrix with respect to the $G_1$-orbits on points and lines of $\PG(3, q)$ is also considered. In particular, for a line $\ell$ belonging to a given line $G_1$-orbits, say $\cL$, the point $G_1$-orbit distribution of $\ell$ is either explicitly computed or it is shown to depend on the number of elements $x$ in $\F_q$ (or in a subset of $\F_q$) such that $\Tr_{q|2}(g(x)) = 0$, where $g$ is an $\F_q$-map determined by $\cL$.   

\keywords{$(q+1)$-arc; twisted cubic.}
\end{abstract}

\section{Introduction}

Let $\PG(3, q)$ be the three-dimensional projective space over the finite field $\F_q$ with $q$ elements. A {\em $(q+1)$-arc} of $\PG(3, q)$ is a set of $q+1$ points of $\PG(3, q)$ no four in a plane. If $q = 2^n$, a $(q+1)$-arc of $\PG(3, q)$ is projectively equivalent to the pointset 
\begin{align}
    & \cA = \{P_t = (1,t,t^{2^h},t^{2^h+1}) \mid t \in \F_q\} \cup \{(0,0,0,1)\}, \label{arc}
\end{align}
where $\gcd(n, h) = 1$. If $h = 1$, then $\cA$ is called {\em twisted cubic} and if $q$ is odd every $(q+1)$-arc of $\PG(3, q)$ is a {\em twisted cubic}. Moreover $\cA$ and $\{(1,t,t^{2^{n-h}},t^{2^{n
-h}+1}) \mid t \in \F_q\} \cup \{(0,0,0,1)\}$ are projectively equivalent. The subgroup of $\PGL(4, q)$ stabilizing $\cA$, say $G_h$, is isomorphic to $\PGL(2, q)$ whenever $q > 4$, and contains a subgroup isomorphic to $\PGL(2, q)$ if $q \le 4$. For more details on $(q+1)$-arcs of $\PG(3, q)$ the reader is referred to \cite[Chapter 21]{H2}. 

Several authors dealt with the action of the group $G_1$ on points, lines and planes of $\PG(3, q)$ and the determination of their related incidence matrices. Indeed, a comprehensive understanding of this topic is of some interest in purely geometric terms \cite{DMP1, DMP2, DMP3, DMP}; in particular the classification of $G_1$-orbits on lines of $\PG(3,q)$ is equivalent to the classification of pencils of cubics in $\PG(1, q)$, see \cite{GL}. Moreover aspects of these problems are related and have been applied in coding theory in order to obtain asymptotically optimal multiple covering codes \cite{b2020}, or to study the weight distribution of the cosets of $\cC$ \cite{DMP0}, or to compute the coset leader weight enumerator of $\cC$ \cite{BPS}, where $\cC$ is the generalized Reed-Solomon code associated with a twisted cubic.   

In this paper we assume $q = 2^n$. In Section~\ref{sec:points}, we show that, similarly to the case $h = 1$, the group $G_h$ has five orbits on points on $\PG(3, q)$ and that the point-plane incidence matrix with respect to the $G_h$-orbits on points and planes of $\PG(3, q)$ mirrors the case $h = 1$, which was computed in \cite{b2020}. If $q \not\equiv 0 \pmod{3}$, the group $G_h$ fixes a symplectic polarity of $\PG(3, q)$ and hence to each $G_h$-orbit on points there corresponds a $G_h$-orbit on planes of $\PG(3, q)$. 

In Section~\ref{sec:lines}, we show that the group $G_h$ has $2q+7+\xi$ orbits on lines of $\PG(3, q)$, providing a proof of \cite[Conjecture 8.2]{DMP} in the even characteristic case. Finally the point-line incidence matrix with respect to the $G_1$-orbits on points and lines of $\PG(3, q)$ is considered. For a line $\ell$ belonging to a given line $G_1$-orbits, say $\cL$, the point $G_1$-orbit distribution of $\ell$ is either explicitly computed or it is shown to depend on the number of elements $x$ in $\F_q$ (or in a subset of $\F_q$) such that $\Tr_{q|2}(g(x)) = 0$, where $g$ is an $\F_q$-map determined by $\cL$.

\section{Preliminaries}

Let $q=2^n$ and let $\F_q$ be the finite field of order $q$. Let $\PG(3, q)$ be the three-dimensional projective space over $\F_{q}$ equipped with homogeneous projective coordinates $(X_1, X_2, X_3, X_4)$. Denote by $U_i$ the point having $1$ in the $i$-th position and $0$ elsewhere. 

Let $\cW(3, q)$ be the symplectic polar space consisting of the subspaces of $\PG(3, q)$ represented by the totally isotropic subspaces of $\F_{q}^4$ with respect to the non-degenerate alternating form $\beta$ given by 
\begin{align*}
x_1 y_4 + x_2 y_3 + x_3 y_2 + x_4 y_1. 
\end{align*}
The subspaces of $\cW(3, q)$ of maximum dimension are lines and are called {\em generators}. Denote by $\s$ the symplectic polarity of $\PG(3, q)$ defining $\cW(3, q)$.

Let $h$ be a positive integer such that $\gcd(h,n) = 1$ and consider the $(q+1)$-arc of $\PG(3,q)$ given by \eqref{arc}. This means that $\cA$ consists of $q+1$ points of $\PG(3,q)$ such that no four of them are coplanar. For $h = 1$ or $h = n-1$, $\cA$ is a twisted cubic of $\PG(3, q)$. 
A line of $\PG(3,q)$ joining two distinct points of $\cA$ is called a {\em real chord}. Let $\bar{\cA}$ be given by
\begin{equation}\label{quadratic}
\begin{aligned}
& \{\bar{P}_t = (1, t, t^{2^h}, t^{2^h+1})\mid t \in \F_{q^2}\} \cup \{U_4\} && \mbox{ if } h \mbox{ is odd}, \\
& \{\bar{P}_t = (1, t, t^{2^{n+h}}, t^{2^{n+h}+1})\mid t \in \F_{q^2}\} \cup \{U_4\} && \mbox{ if } h \mbox{ is even}.
\end{aligned}
\end{equation}
It holds that $\bar{\cA}$ is a $(q^2+1)$-arc of $\PG(3, q^2)$ and $\cA$ is embedded in $\bar{\cA}$. The line of $\PG(3, q^2)$ obtained by joining $\bar{P}_t$ and $\bar{P}_{t^q}$, with $t \in \F_{q^2} \setminus \F_q$, meets the canonical Baer subgeometry $\PG(3, q)$ in the $q+1$ points of a line skew to $\cA$. Such a line is called an {\em imaginary chord}. If $r$ is a (real or imaginary) chord, then the line $r^{\s}$ is called ({\em real} or {\em imaginary}) {\em axis}. For each point $P\in \mathcal{A}$, the {\em tangent line} to $\mathcal{A}$ at $P$ is the line $\ell_P = \langle P, P' \rangle$, with $P'=(0,1,0,t^{2^h})$ if $P = P_t$ and $P' = U_3$ if $P = U_4$. At each point $P_t$ (resp. $U_4$) of $\cA$ there corresponds the {\em osculating plane} $P_t^{\s}$ (resp. $U_4^{\s}$) with equation $t^{2^h+1} X_1 + t^{2^h} X_2 + t X_3 + X_4 = 0$ (resp. $X_1 = 0$), meeting $\cA$ only at $P_t$ (resp. $U_4$) and containing the tangent line $\ell_{P_t}$ (resp. $\ell_{U_4}$). Hence the $q+1$ lines tangent to $\cA$ are generators of $\cW(3, q)$ and they form a regulus of the hyperbolic quadric $\cQ^+(3, q): X_1 X_4 + X_2 X_3 = 0$. There are $q(q+1)/2$ real chords and $q(q-1)/2$ imaginary chords. Tangents, real chords and imaginary chords are all the chords of $\cA$. 

The group $G_h$ of projectivities of $\PG(3, q)$ stabilizing $\cA$ is isomorphic to $\PGL(2, q)$ whenever $q \geq 5$, and the elements of $G_h$ are represented by the matrices
\begin{align*}
& M_{a,b,c,d}=
\begin{pmatrix}
a^{2^{h}+1} & a^{2^h}b  & ab^{2^h} & b^{2^{h}+1} \\
a^{2^h}c  & a^{2^h}d  & b^{2^h}c  & b^{2^h}d\\
ac^{2^h}  & bc^{2^h}  & ad^{2^h}  & bd^{2^h}\\
c^{2^{h}+1} & c^{2^h}d  & cd^{2^h} & d^{2^{h}+1}
\end{pmatrix},
\end{align*}
where $a,b,c,d \in \F_q$ and $ad+bc = 1$. This group leaves $\mathcal{W}(3,q)$ invariant, being $M_{a,b,c,d}^tJM_{a,b,c,d}=J$, where \[J=\begin{pmatrix}0&0&0&1\\0&0&1&0\\ 0 &1 &0&0 \\ 1 &0&0&0\end{pmatrix}\] is the Gram matrix of $\beta$. We will denote by $\bar{G}_h$ or $\bar{G}_{n+h}$ the subgroup of $\PGL(4, q^2)$ fixing $\bar{\cA}$, depending on $h$ is odd or even. In both cases their elements are represented by the matrices $M_{a,b,c,d}$, with $a,b,c,d \in \F_{q^2}$ and $ad+bc = 1$. For more properties on $(q+1)$-arcs in even characteristic, we refer the readers to \cite[Section 21.3]{H2}.

\section{Point orbits}\label{sec:points}
In this section, we study the orbits on points and planes under the action of $G_h$. We start by proving that each point of $\PG(3,q)\setminus \cA$ lies on exactly one chord of $\cA$, similarly to the case $h \in \{1, n-1\}$, see \cite[Theorem 21.1.9]{H2}.

\begin{prop}\label{corde}
Every point of $\PG(3,q) \setminus \cA$ lies on exactly one chord of $\cA$.
\end{prop}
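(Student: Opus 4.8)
The plan is to fix an arbitrary point $Q \in \PG(3,q)\setminus\cA$ and count, with multiplicity, the chords of $\cA$ through $Q$, showing this number equals exactly $1$. The total count of chords is $(q+1) + q(q+1)/2 + q(q-1)/2 = (q+1)^2$ (tangents, real chords, imaginary chords), as recorded in the Preliminaries. Each chord is a line of $\PG(3,q)$, carrying $q+1$ points; the $q+1$ points of $\cA$ each lie on one tangent and $q$ real chords, i.e.\ on $q+1$ chords. A double-counting of incident (point, chord) pairs with the point ranging over $\cA$ gives $(q+1)(q+1)$ incidences absorbed entirely by $\cA$, and since $|\PG(3,q)| = q^3+q^2+q+1 = (q+1)(q^2+1)$ while the chords cover $(q+1)^2 q$ points off $\cA$ counted with multiplicity, we get $(q+1)^2 q = (q^3+q^2)\cdot\frac{q+1}{q+1}\cdots$; more cleanly, the number of points of $\PG(3,q)\setminus\cA$ is $q^3+q^2 = q^2(q+1)$, and $(q+1)^2 q = q(q+1)^2$, so the average number of chords through a point off $\cA$ is exactly $q(q+1)^2 / (q^2(q+1)) = (q+1)/q$, which is not an integer—so a naive averaging argument does not immediately work and one must instead argue that \emph{no two distinct chords meet outside $\cA$}.

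Accordingly, the real content is: two distinct chords of $\cA$ cannot share a point of $\PG(3,q)\setminus\cA$. First I would treat two real chords: if $\langle P_s, P_t\rangle$ and $\langle P_u, P_v\rangle$ met in a point off $\cA$ with $\{s,t\}\neq\{u,v\}$, then these four points $P_s,P_t,P_u,P_v$ of $\cA$ would be coplanar, contradicting the arc property (this also covers the case where one chord is a tangent, regarding it as a limit, or directly: a tangent at $P_t$ together with a secant through two further points of $\cA$ spanning a plane containing three points of $\cA$ plus the tangent direction forces a $4\times 4$ determinant vanishing, again contradicting the arc property via the Vandermonde-type minor computed from \eqref{arc}). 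Next, the passage to imaginary chords: an imaginary chord is $\langle \bar P_t, \bar P_{t^q}\rangle \cap \PG(3,q)$ for $t\in\F_{q^2}\setminus\F_q$, and a point $Q\in\PG(3,q)$ lies on it iff $Q \in \langle \bar P_t, \bar P_{t^q}\rangle$ in $\PG(3,q^2)$. Since $\bar\cA$ is a $(q^2+1)$-arc of $\PG(3,q^2)$, the same coplanarity obstruction shows $Q$ lies on at most one chord of $\bar\cA$, hence on at most one chord of $\cA$ of any type.

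For existence, I would show every $Q\notin\cA$ lies on \emph{at least} one chord. Project $\cA$ from $Q$: the $q+1$ points of $\cA$ map to points of a plane $\PG(2,q)$; if all images were distinct we would get a $(q+1)$-arc in $\PG(2,q)$, but combined with the count this forces some coincidence, i.e.\ two points of $\cA$ collinear with $Q$ (a real chord through $Q$), unless the projection is injective—in which case I would instead use that $\cA$ is a $(q+1)$-arc in $\PG(3,q)$ and a standard dimension/counting argument (the union of all chords of $\bar\cA$ covers all of $\PG(3,q^2)$, since a $(q^2+1)$-arc in $\PG(3,q^2)$ has this property, cf.\ \cite[Theorem 21.1.9]{H2} in the relevant characteristic), so $Q$, being a point of the Baer subgeometry, lies on some chord of $\bar\cA$; one then checks that a chord of $\bar\cA$ meeting $\PG(3,q)$ in a point of the subgeometry but not defined over $\F_q$ as a real/tangent chord must be Frobenius-stable, hence an imaginary chord. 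The main obstacle I anticipate is this last existence step—ensuring that the chord of $\bar\cA$ through $Q$ is one of the three admissible types over $\F_q$ rather than an arbitrary $\F_{q^2}$-chord—which is handled by a Frobenius-invariance argument: $\sigma(Q)=Q$ forces $\sigma$ to permute the (unique, by the already-established uniqueness over $\F_{q^2}$) chord of $\bar\cA$ through $Q$, so that chord is either $\F_q$-rational (giving a tangent or real chord) or swapped with its conjugate at the two arc-points (giving an imaginary chord).
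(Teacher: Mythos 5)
There is a genuine gap, and ironically it stems from an arithmetic slip that causes you to abandon the argument that actually works. The number of chords is $(q+1)+\tfrac{q(q+1)}{2}+\tfrac{q(q-1)}{2}=q^2+q+1$, not $(q+1)^2$, and the chords do not all carry $q$ points off $\cA$: a tangent carries $q$, a real chord $q-1$, an imaginary chord $q+1$. The correct incidence count between points of $\PG(3,q)\setminus\cA$ and chords is therefore $q(q+1)+\tfrac{(q-1)q(q+1)}{2}+\tfrac{(q+1)q(q-1)}{2}=q^3+q^2$, which is \emph{exactly} the number of points off $\cA$. So once you know no two chords meet outside $\cA$, the count alone forces every such point onto exactly one chord — no separate existence argument is needed. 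This is precisely the paper's proof (phrased there as: the off-$\cA$ points of the three families are pairwise disjoint sets of sizes $q^2+q$, $(q^3-q)/2$, $(q^3-q)/2$, summing to $q^3+q^2$). Having miscomputed the average as $(q+1)/q$, you instead bolt on an existence argument that is circular: you invoke the fact that the chords of $\bar{\cA}$ cover $\PG(3,q^2)$, which is exactly the proposition being proved, over $\F_{q^2}$; the cited Theorem 21.1.9 of Hirschfeld covers only the twisted cubic, whereas $\bar{\cA}$ is a general $(q^2+1)$-arc with the same parameter $h$, so nothing external can be quoted here and the induction up the tower $\F_{q^2},\F_{q^4},\dots$ never terminates. (The Frobenius-descent step at the end, by contrast, is fine once existence over $\F_{q^2}$ is granted.)

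A secondary issue is the disjointness of a tangent and a (real or imaginary) chord outside $\cA$: this is not a pure ``four coplanar arc points'' contradiction, since the plane spanned would contain only three points of $\cA$, which an arc permits. You need either the explicit $4\times 4$ determinant you gesture at (involving $P_s$, $P_t$, $P_u$ and the derivative point $(0,1,0,u^{2^h})$, which must be computed for general $h$, not just the Vandermonde case $h=1$), or, as the paper does, the observation that the union of the tangents is the quadric $\cQ^+(3,q)$, that a real chord meets this quadric only in its two points of $\cA$, and that an imaginary chord misses it entirely. I would recommend rewriting the proof around the corrected count plus the quadric argument; as it stands the existence half does not go through.
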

\begin{proof}
The points on tangent lines form the hyperbolic quadric $\cQ^+(3,q)$ and they are $(q+1)^2$ in number.
 
If two real chords meet, then their intersection point belongs to $\cA$. Indeed, assume by contradiction that $s, s'$ are two real chords which meet in a point $P \notin \cA$, then these two lines generate a plane containing four points of $\cA$, contradicting the fact that $\cA$ is a $(q+1)$-arc. Every real chord has $q-1$ points not in common with $\cA$ and a real chord meets $\cQ^+(3, q)$ in points of $\cA$. Since there are $q(q+1)/2$ real chords, it follows that there are $(q^3-q)/2$ points of $\PG(3, q) \setminus \cA$ lying on real chords.

An imaginary chord of $\cA$, when extended in $\PG(3, q^2)$, meets $\bar{\cA} \setminus \cA$ in two points. Hence, two imaginary chords or a real chord and an imaginary one cannot meet outside $\cA$, otherwise, when extended in $\PG(3, q^2)$, they would span a plane of $\PG(3, q^2)$ containing four points of $\bar{\cA}$, a contradiction. Let $\cQ^+(3, q^2)$ denote the hyperbolic quadric of $\PG(3, q^2)$ such that $\cQ^+(3, q^2) \cap \PG(3, q) = \cQ^+(3, q)$. It is easily seen that an imaginary chord meets $\cQ^+(3, q^2)$ in points of $\bar{\cA} \setminus \cA$. Hence an imaginary chord has empty intersection with $\cQ^+(3,q)$. Since there are $q(q-1)/2$ imaginary chords, there are $(q^3-q)/2$ points of $\PG(3, q) \setminus \cA$ lying on imaginary chords. 
\end{proof}

\begin{prop}\label{points}
The group $G_h$ has five orbits on points of $\PG(3, q)$:
\begin{itemize}
    \item $\cA$ of size $q+1$;
    \item $\cO_0$ of size $(q^3-q)/3$, consisting of points not contained in any osculating plane;
    \item $\cO_1$ of size $(q^3-q)/2$, consisting of points on exactly one osculating plane;
    \item $\cO_2 = \cQ^+(3, q) \setminus \cA$ of size $q^2+q$, consisting of points on exactly two osculating planes;
    \item $\cO_3$ of size $(q^3-q)/6$, consisting of points on exactly three osculating planes.
\end{itemize}
\end{prop}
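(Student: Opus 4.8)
The plan is to exploit the action of $G_h \simeq \PGL(2,q)$ together with the osculating-plane count, which is a $G_h$-invariant of a point. First I would note that $\cA$ is a single orbit (this is classical, since $G_h$ acts on $\cA$ as $\PGL(2,q)$ on $\PG(1,q)$), so it suffices to analyse $\PG(3,q)\setminus\cA$. For a point $P$ write $\nu(P)$ for the number of osculating planes through $P$; since $G_h$ permutes the $q+1$ osculating planes $P_t^{\s}$, $U_4^{\s}$, the value $\nu(P)$ is constant on $G_h$-orbits. The strategy is: (1) show $\nu(P) \le 3$ for every $P \notin \cA$; (2) for each value $i \in \{0,1,2,3\}$ determine the size of the set $\cO_i = \{P \notin \cA : \nu(P) = i\}$ by a double-counting argument; (3) show each $\cO_i$ is a single $G_h$-orbit.

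For step (1) and the sizes, I would count incident pairs $(P, \pi)$ with $P \notin \cA$ and $\pi$ an osculating plane through $P$, refined by how many osculating planes pass through a common point. An osculating plane $P_t^{\s}$ meets $\cA$ only in $P_t$, so it contains $q$ points of $\PG(3,q)\setminus\cA$; there are $q+1$ osculating planes, giving $\sum_i i\,|\cO_i| = q(q+1)$. Two distinct osculating planes $P_s^{\s}, P_t^{\s}$ meet in a line; I would check (a routine coordinate computation with the given plane equations $t^{2^h+1}X_1 + t^{2^h}X_2 + tX_3 + X_4 = 0$) that this line is the tangent line through no point of $\cA$ in general, and more precisely that it is the axis of the real chord $\langle P_s, P_t\rangle$, hence lies on $\cQ^+(3,q)$; this identifies $\cO_2$ with $\cQ^+(3,q)\setminus\cA$, of size $(q+1)^2 - (q+1) = q^2+q$, and gives $\sum_i \binom{i}{2}|\cO_i| = \binom{q+1}{2}(q-1)\cdot\frac{1}{?}$ — more carefully, $\sum_i \binom{i}{2}|\cO_i|$ equals the number of (point, unordered pair of osculating planes) incidences, which is $\binom{q+1}{2}$ times the number of points of $\PG(3,q)\setminus\cA$ on the intersection line of two osculating planes. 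Triple intersections of osculating planes correspond, via the duality $\s$, to planes through three points of $\cA$, i.e. to the $\binom{q+1}{3}$ secant planes; each such triple of osculating planes meets in exactly one point (three generic planes of $\PG(3,q)$ meet in a point, and one checks this point is off $\cA$), contributing $\sum_i \binom{i}{3}|\cO_i| = \binom{q+1}{3}$, and no point lies on four osculating planes since four points of $\cA$ are never coplanar (so four osculating planes are never concurrent, by dualizing). From $|\cO_3| = \binom{q+1}{3} = (q^3-q)/6$, the relation $\binom{2}{2}|\cO_2| + \binom{3}{2}|\cO_3| = \binom{q+1}{2}$ (each secant line of osculating planes hitting exactly the relevant configuration) pins down the count of pairwise-intersection points, and then the two linear relations $\sum i|\cO_i| = q(q+1)$ and $|\cA| + \sum|\cO_i| = q^3+q^2+q+1$ together with $|\cO_2|, |\cO_3|$ known solve for $|\cO_0|$ and $|\cO_1|$, yielding $(q^3-q)/2$ and $(q^3-q)/3$ as claimed. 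I would double-check the arithmetic is consistent, in particular that $|\cO_0| + |\cO_1| + |\cO_2| + |\cO_3| + (q+1) = q^3+q^2+q+1$.

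For step (3), transitivity of $G_h$ on each $\cO_i$: on $\cO_2 = \cQ^+(3,q)\setminus\cA$ I would use that $G_h$ acts on the quadric $\cQ^+(3,q)$ — whose one regulus is the set of tangent lines, permuted sharply $3$-transitively via $\PGL(2,q)$ — and deduce transitivity on the $q^2+q$ non-arc points from the $\PGL(2,q)\times\PGL(2,q)$-type action on the grid. For $\cO_1$ I would take a point $P$ on a unique osculating plane, say $U_4^{\s}: X_1 = 0$, and show the stabilizer of $U_4$ in $G_h$ (a group of order $q(q-1)$) acts transitively on the points of $U_4^{\s}$ lying on exactly one osculating plane; a direct parametrization of these points and the stabilizer matrices settles it. For $\cO_0$ and $\cO_3$ I would instead argue by sizes: once transitivity on $\cA$, $\cO_1$, $\cO_2$ is established and $|\cO_3| = \binom{q+1}{3}$ is realized concretely as the image under $\s$ of the secant planes — on which $G_h$ is transitive since $G_h$ is $3$-transitive on $\cA$ — we get transitivity on $\cO_3$; finally $\cO_0$ is the complement of all the others, and its transitivity follows because $P \in \cO_0$ lies on a unique imaginary chord (by Proposition~\ref{corde}), $G_h$ permutes the $q(q-1)/2$ imaginary chords transitively (these correspond to the $\PG(1,q^2)\setminus\PG(1,q)$ Frobenius-pairs, on which $\PGL(2,q)$ is transitive), and the stabilizer of an imaginary chord acts transitively on its $q-1$ points off $\cQ^+(3,q)$; comparing $\frac{q(q-1)}{2}(q-1) = (q^3 - 2q^2 + q)/2$ with $|\cO_0| = (q^3-q)/3$ shows this needs care, so I would actually derive $\cO_0$'s transitivity from the orbit-counting (Burnside) identity or directly from the stabilizer order $|G_h|/|\cO_0| = q(q^2-1)\cdot\frac{3}{q^3-q} = 3$, forcing the orbit of any $P \in \cO_0$ to have size a multiple of $|G_h|/3$, hence to be all of $\cO_0$ if I can exhibit one point with stabilizer of order exactly $3$.

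The main obstacle I expect is step (3) for $\cO_0$ and the clean identification of the pairwise and triple intersection loci of osculating planes in step (2): the linear-algebra bookkeeping (which line is $P_s^{\s}\cap P_t^{\s}$, which point is $P_r^{\s}\cap P_s^{\s}\cap P_t^{\s}$, and checking these are off $\cA$) is routine but must be done carefully in the $h$-twisted coordinates rather than the $h=1$ ones, and showing $G_h$ acts transitively on $\cO_0$ rather than merely on the imaginary chords requires either pinning down a point-stabilizer of order $3$ or invoking the orbit-size divisibility together with the already-known decomposition of the remaining $q^3+q^2+q+1 - (q+1) - |\cO_1| - |\cO_2| - |\cO_3| = |\cO_0|$ points into $G_h$-orbits; combining divisibility by $q(q^2-1)/3$ with the fact that $3 \nmid |\cO_0|/(|G_h|/3)$ is impossible unless $\cO_0$ is a single orbit then closes the argument.
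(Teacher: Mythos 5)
There is a genuine gap, and it sits exactly where you suspected: transitivity of $G_h$ on $\cO_0$ (and, to a lesser extent, on $\cO_1$). Your counting scheme does recover the correct sizes once two slips are fixed — an osculating plane contains $q^2+q$ points off $\cA$, not $q$, so $\sum_i i|\cO_i| = q(q+1)^2$; and the line $P_s^\s \cap P_t^\s$ (the axis of the real chord $\langle P_s,P_t\rangle$) is \emph{not} contained in $\cQ^+(3,q)$: it meets the quadric in exactly two points, the remaining $q-1$ of its points lying in $\cO_3$, so $\cO_2$ must instead be extracted from $|\cO_2|+3|\cO_3| = \binom{q+1}{2}(q+1)$. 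But sizes alone do not give orbits. Your divisibility fallback for $\cO_0$ is invalid: a set of size $|G_h|/3$ can perfectly well split into two orbits of size $|G_h|/6$, and orbit sizes are divisors of $|G_h|$, not multiples of $|G_h|/3$. Your other route — "each $P\in\cO_0$ lies on a unique imaginary chord" — is only true for $q\equiv -1\pmod 3$ (Corollary~\ref{chords}); for $q\equiv 1\pmod 3$ the points of $\cO_0$ lie on real chords. The honest fix you name, exhibiting a point of $\cO_0$ with stabilizer of order exactly $3$, is precisely the computation the paper performs and that your proposal defers: for $q\equiv 1\pmod 3$ one takes $Q=U_1+xU_4$ with $x$ a non-cube and solves $d^{2^h+1}=x$; for $q\equiv -1\pmod 3$ one must work on an imaginary chord, which the paper does by passing to the $(q^2+1)$-arc $\bar\cA$ and a non-canonical Baer subgeometry $\Sigma$ of $\PG(3,q^2)$ — this is the genuinely hard part of the proof and nothing in your sketch substitutes for it. The same remark applies to $\cO_1$: your reduction to the Borel subgroup $Stab_{G_h}(U_4)$ acting on the $(q^2-q)/2$ points of $U_4^\s\cap\cO_1$ is a legitimate (and potentially pleasant) alternative, but it is asserted, not carried out.

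For contrast, the paper does not use osculating-plane incidence counts at all to get the orbits: it invokes Proposition~\ref{corde} (every point off $\cA$ lies on a unique chord), uses transitivity of $G_h$ on tangents, real chords and imaginary chords, and computes the point stabilizers on one representative chord of each type, reading off the number of osculating planes through each representative afterwards. Your argument for $\cO_3$ via $3$-transitivity of $G_h$ on $\cA$ and polarity of secant planes is clean and genuinely simpler than the paper's treatment of that orbit, and your grid argument for $\cO_2$ is workable; but as it stands the proposal establishes the partition and its cardinalities without proving that $\cO_0$ and $\cO_1$ are single orbits, which is the substance of the proposition.
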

\begin{proof}
Taking into account Proposition~\ref{corde}, it is enough to determine the orbits of $G$ on points of $\PG(3, q) \setminus \cA$ lying on chords of $\cA$. Moreover the group $G_h$ acts transitively on tangent lines, real chords and imaginary chords of $\cA$. Let $\ell$ be a chord of $\cA$. 

If $\ell$ is tangent we choose $\ell = \langle U_1, U_2 \rangle$ and consider the point $U_2 \in \ell \setminus \cA$. The projectivities of $G_h$ stabilizing $U_2$ are represented by the matrices $M_{d^{-1},0,0,d}$, $d \in \F_q \setminus \{0\}$. Hence $|Stab_{G_h}(U_2)| = q-1$ and $|{U_2}^{G_h}| = q^2+q = |\cQ^+(3, q) \setminus \cA|$. The point $U_2$ belongs to $U_1^\s$ and $U_4^\s$ and to no other osculating plane. 

If $\ell$ is a real chord, we consider $\ell = \langle U_1, U_4 \rangle$. The stabilizer of $\ell$ in $G_h$ has order $2(q-1)$ and it is generated by the projectivities represented by the matrices $M_{0,1,1,0}$ and $M_{d^{-1},0,0,d}$, $d \in \F_q \setminus \{0\}$. Let $x \in \F_q \setminus \{0\}$ and let $Q = U_1 + x U_4 \in \ell \setminus \cA$. We will prove that 
\begin{align*}
    & |Stab_{G_h}(Q)| =  
    \begin{cases}
        2 & \mbox{ if } q \equiv -1 \pmod{3}, \\
        3 & \mbox{ if } q \equiv 1 \pmod{3} \mbox{ and } x \mbox{ is not a cube in } \F_q, \\
        6 & \mbox{ if } q \equiv 1 \pmod{3} \mbox{ and } x \mbox{ is a cube in } \F_q.
    \end{cases}
\end{align*}
A projectivity represented by $M_{d^{-1}, 0, 0, d}$ fixes $Q$ if and only if $d^{2^h+1} = 1$ which has $\gcd(2^h+1, 2^n-1)$ solutions. Since
\begin{align*}
    & \gcd(2^h+1, 2^n-1) = 
    \begin{cases}
    3 & \mbox{ if } n \mbox{ is even,} \\
    1 & \mbox{ if } n \mbox{ is odd,} 
    \end{cases}
\end{align*}
the number of these projectivities fixing $Q$ equals three or one according as $q \equiv 1$ or $-1 \pmod{3}$, respectively. Similarly, a projectivity represented by $M_{0, d^{-1}, d, 0}$ fixes $Q$ if and only if 
\begin{align}
    & d^{2^h+1} = x. \label{eq1}
\end{align}
Equation \eqref{eq1} admits a solution if and only if $x^{\frac{2^n-1}{\gcd(2^h+1, 2^n-1)}} = 1$ and in that case it has $\gcd(2^h+1, 2^n-1)$ solutions. Therefore, if $q \equiv 1 \pmod{3}$, the number of these projectivities fixing $Q$ is three or zero according as $x$ is a cube in $\F_q$ or not, whereas if $q \equiv -1 \pmod{3}$ exactly one of these projectivities leaves $Q$ invariant. The point $Q$ belongs to the osculating plane $P_t^\s$ if and only if $t^{2^h+1} = x$. Hence it lies on one osculating plane if $q \equiv -1 \pmod{3}$, whereas if $q \equiv 1 \pmod{3}$ on three or none osculating planes according as $x$ is a cube or not in $\F_q$. 

If $\ell$ is an imaginary chord, we consider the $(q^2+1)$-arc $\bar{\cA}$ of $\PG(3, q^2)$ given by \eqref{quadratic} and the non-canonical Baer subgeometry $\Sigma$ 
\begin{align*}
    & \{(\alpha, \beta, \beta^q, \alpha^q): \, \alpha, \beta \in \F_{q^2}, (\alpha,\beta) \neq (0,0)\}.
\end{align*}
Some straightforward computations show that the intersection of $\bar{\cA}$ with $\Sigma$ is the $(q+1)$-arc of $\Sigma$ consisting of the points $\bar{P}_t$, $t \in \F_{q^2}$, $t^{q+1} = 1$. The projectivities fixing $\bar{\cA} \cap \Sigma$ belong to $\bar{G}_h$, if $h$ is odd, or to $\bar{G}_{n+h}$, if $h$ is even, and are those represented by the matrices $M_{a,b,c,d}$ with $a,b,c,d \in \F_{q^2}$, $ad+bc=1$, $cd^q+ab^q=0$ and $a^{q+1}+b^{q+1}+c^{q+1}+d^{q+1}=0$. Denote by $S$ the group $Stab_{\bar{G}_h }( \Sigma) $ if $h$ is odd or $Stab_{\bar{G}_{n+h}}( \Sigma)$ if $h$ is even. Note that $G_h$ and $S$ are conjugated in $\bar{G}_h$ or $\bar{G}_{n+h}$. In this setting, $\ell = \langle U_1, U_4 \rangle_{q^2} \cap \Sigma$ is an imaginary chord of $\bar{\cA} \cap \Sigma$. The stabilizer of $\ell$ in $S$ has order $2(q+1)$ and it is generated by the projectivities represented by the matrices $M_{0,1,1,0}$ and $M_{d^{-1},0,0,d}$, $d \in \F_{q^2}$, with 
\begin{align}
    & d^{2^n+1} = 1. \label{eq2}
\end{align}
Fix $\alpha \in \F_{q^2} \setminus \{0\}$ and set $R = U_1 + \alpha^{q-1} U_4 \in \ell$. We show that 
\begin{align*}
    & |Stab_{S}(R)| =  
    \begin{cases}
        2 & \mbox{ if } q \equiv 1 \pmod{3}, \\
        3 & \mbox{ if } q \equiv -1 \pmod{3} \mbox{ and } \alpha^{q-1} \mbox{ is not a cube in } \F_{q^2}, \\
        6 & \mbox{ if } q \equiv -1 \pmod{3} \mbox{ and } \alpha^{q-1} \mbox{ is a cube in } \F_{q^2}.
    \end{cases}
\end{align*}
A projectivity represented by $M_{d^{-1}, 0, 0, d}$ fixes $R$ if and only if 
\begin{equation}\label{eq3}
    \begin{aligned}
        & d^{2^h+1} = 1 && \mbox{ if } h \mbox{ is odd,} \\
        & d^{2^{n+h}+1} = 1 && \mbox{ if } h \mbox{ is even.} 
    \end{aligned}
\end{equation}
Let $\bar{\w}$ be a primitive element of $\F_{q^2}$ and set $d = \bar{\w}^{i}$. Since $\gcd(2^h+1, 2^{2n}-1) = 3$ if $h$ is odd and $\gcd(2^{n+h}+1, 2^{2n}-1) = 3$ if $h$ is even, equation \eqref{eq3} is satisfied by
\begin{align*}
    & i \equiv \frac{t(2^{2n}-1)}{3} \pmod{2^{2n-1}}, t = 0,1,2.
\end{align*}
If $q \equiv -1 \pmod{3}$, these three solutions satisfy \eqref{eq2}, whereas if $q \equiv 1 \pmod{3}$, the unique solution fulfilling \eqref{eq2} occurs for $t=0$. Therefore there are three or one of these projectivities stabilizing $R$ depending on $q \equiv -1$ or $1 \pmod{3}$, respectively. Similarly, a projectivity represented by $M_{0, d^{-1}, d, 0}$ fixes $R$ if and only if 
\begin{equation}\label{eq4}
    \begin{aligned}
    & d^{-(2^h+1)} = \alpha^{q-1} && \mbox{ if } h \mbox{ is odd,} \\
    & d^{-(2^{n+h}+1)} = \alpha^{q-1} && \mbox{ if } h \mbox{ is even.} 
    \end{aligned}
\end{equation}
In this case, if $\alpha = \bar{\w}^k$, equation \eqref{eq4} admits a solution if and only if $3$ divides $k(2^n-1)$. Assume that $3$ divides $k(2^n-1)$, then \eqref{eq4} has the three solutions 
\begin{align*}
    & i \equiv \frac{-k(2^n-1) u_0 + t(2^{2n}-1)}{3} \pmod{2^{2n-1}}, t = 0,1,2,
\end{align*}
where $u_0$ is an integer such that $(2^h+1) u_0 + (2^{2n}-1) v_0 = 3$, if $h$ is odd and $(2^{n+h}+1) u_0 + (2^{2n}-1) v_0 = 3$, if $h$ is even. If $q \equiv -1 \pmod{3}$, these three solutions satisfy \eqref{eq2}, whereas if $q \equiv 1 \pmod{3}$, exactly one of these three solutions satisfies \eqref{eq2}. It follows that, if $q \equiv -1 \pmod{3}$, the number of these projectivities fixing $R$ is three or zero according as $\alpha^{q-1}$ is a cube in $\F_{q^2}$ or not, whereas if $q \equiv 1 \pmod{3}$ exactly one of these projectivities leaves invariant $R$. The point $R$ belongs to the osculating plane $\bar{P}_t^\s$ if and only if 
\begin{align*}
    & t^{2^h+1} = \alpha^{q-1} && \mbox{ if } h \mbox{ is odd,} \\
    & t^{2^{n+h}+1} = \alpha^{q-1} && \mbox{ if } h \mbox{ is even} 
\end{align*}
for some $t \in \F_{q^2}$, with $t^{q+1} = 1$. Hence it lies on one osculating plane if $q \equiv 1 \pmod{3}$, whereas if $q \equiv -1 \pmod{3}$ on three or none osculating planes according as $\alpha^{q-1}$ is a cube or not in $\F_{q^2}$.  
\end{proof}

\begin{cor}\label{chords}
If $q \equiv 1 \pmod{3}$, the points of $\cA \cup \cO_0 \cup \cO_3$ are on the real chords and those of $\cO_1$ on the imaginary chords. If $q \equiv -1 \pmod{3}$, the points of $\cO_0 \cup \cO_3$ are on the imaginary chords and those of $\cA \cup \cO_1$ on the real chords. 
\end{cor}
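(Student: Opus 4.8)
The plan is to read off the orbit–chord correspondence directly from the case analysis already performed inside the proof of Proposition~\ref{points}, after disposing of the two trivial incidences concerning $\cA$ and $\cQ^+(3,q)$.

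First I would record where $\cA$ sits: every point of $\cA$ lies on the $q$ real chords joining it to the remaining points of $\cA$, while imaginary chords are skew to $\cA$ by construction, so in both cases $\cA$ is contained in the union of the real chords and meets no imaginary chord. Next, by Proposition~\ref{corde} the points on tangent lines are exactly the $(q+1)^2$ points of $\cQ^+(3,q)$, so $\cO_2 = \cQ^+(3,q)\setminus\cA$ consists of the non-arc points on tangents and plays no role in this corollary; it remains only to locate the non-arc points of $\cO_0$, $\cO_1$, $\cO_3$.

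Then I would revisit the real-chord computation in the proof of Proposition~\ref{points}: for $Q = U_1 + xU_4$ on the real chord $\langle U_1,U_4\rangle$ it is shown there that, when $q \equiv -1 \pmod 3$, the point $Q$ lies on exactly one osculating plane, hence $Q \in \cO_1$; and when $q \equiv 1 \pmod 3$, $Q$ lies on three osculating planes or on none according as $x$ is or is not a cube in $\F_q$, hence $Q \in \cO_3$ or $Q \in \cO_0$. Since $G_h$ is transitive on real chords, the non-arc points on real chords therefore form $\cO_1$ if $q \equiv -1 \pmod 3$ and $\cO_0 \cup \cO_3$ if $q \equiv 1 \pmod 3$. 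The analogous reading of the imaginary-chord computation (carried out in the Baer subgeometry $\Sigma$) shows, for $R = U_1 + \alpha^{q-1}U_4$, that $R \in \cO_1$ when $q \equiv 1 \pmod 3$ and $R \in \cO_0 \cup \cO_3$ when $q \equiv -1 \pmod 3$, with the $\cO_3$/$\cO_0$ alternative governed by whether $\alpha^{q-1}$ is a cube in $\F_{q^2}$; transitivity on imaginary chords then identifies all points on imaginary chords. Combining these with the first paragraph yields exactly the claimed partition.

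Since essentially all the content is already inside Proposition~\ref{points}, there is no genuine obstacle; the only point requiring care is to pair the cube/non-cube dichotomy (in $\F_q$ for real chords, in $\F_{q^2}$ for imaginary chords) with the correct residue of $q$ modulo $3$, precisely as it appears there. As a consistency check one can compare cardinalities: each of the $q(q+1)/2$ real chords and each of the $q(q-1)/2$ imaginary chords contributes $q-1$ non-arc points, giving $(q^3-q)/2$ on each side, and indeed $|\cO_0| + |\cO_3| = (q^3-q)/3 + (q^3-q)/6 = (q^3-q)/2 = |\cO_1|$, in agreement with the orbit sizes listed in Proposition~\ref{points}.
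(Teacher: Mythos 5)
Your proposal is correct and follows exactly the route the paper intends: the corollary is stated without proof because it is a direct reading of the case analysis inside the proof of Proposition~\ref{points}, where the number of osculating planes through a point of a real chord (one if $q\equiv-1\pmod 3$; three or none if $q\equiv 1\pmod 3$) and through a point of an imaginary chord (the reverse) is computed, combined with Proposition~\ref{corde} and the transitivity of $G_h$ on each family of chords. Your cardinality check $|\cO_0|+|\cO_3|=(q^3-q)/2=|\cO_1|$ is a nice additional confirmation consistent with the counts already established in the proof of Proposition~\ref{corde}.
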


The $G_h$-orbits on planes of $\PG(3, q)$ can be described by using the symplectic polarity $\s$ of $\PG(3, q)$ defining $\cW(3, q)$. Let $\cO_j^\s = \{P^\s \mid P \in \cO_j\}$.

\begin{prop}
The group $G_h$ has five orbits on planes of $\PG(3, q)$:
\begin{itemize}
    \item The $q+1$ osculating planes;
    \item $\cO_0^\s$ of size $(q^3-q)/3$, consisting of planes containing no point of $\cA$;
    \item $\cO_1^\s$ of size $(q^3-q)/2$, consisting of planes containing one point of $\cA$;
    \item $\cO_2^\s$ of size $q^2+q$, consisting of planes containing two points of $\cA$;
    \item $\cO_3^\s$ of size $(q^3-q)/6$, consisting of planes containing three point of $\cA$.
\end{itemize}
\end{prop}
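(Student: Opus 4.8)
The plan is to deduce the statement from Proposition~\ref{points} by pushing it through the symplectic polarity $\s$, which is $G_h$-equivariant. First I would record that, since $M_{a,b,c,d}^t J M_{a,b,c,d}=J$, every element of $G_h$ preserves the form $\beta$ exactly; hence for $g\in G_h$ and any point $P$ one has $g(P^\s)=(gP)^\s$, because $\beta(P,R)=0$ forces $\beta(gP,gR)=\beta(P,R)=0$. Thus $\s$ intertwines the actions of $G_h$ on points and on planes, and therefore it carries the partition of the points of $\PG(3,q)$ into $G_h$-orbits onto the partition of the planes into $G_h$-orbits: for each point-orbit $\cX$ the set $\cX^\s=\{P^\s\mid P\in\cX\}$ is a single $G_h$-orbit on planes, of size $|\cX|$, distinct point-orbits give distinct plane-orbits, and every plane arises this way. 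By Proposition~\ref{points} this already produces exactly five plane-orbits, namely $\cA^\s,\cO_0^\s,\cO_1^\s,\cO_2^\s,\cO_3^\s$, of sizes $q+1$, $(q^3-q)/3$, $(q^3-q)/2$, $q^2+q$, $(q^3-q)/6$.

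Second, I would translate the geometric descriptions. By definition the osculating planes of $\cA$ are exactly the planes $P^\s$ with $P\in\cA$, so $\cA^\s$ is the orbit of the $q+1$ osculating planes. For the remaining orbits the key is that $\beta(P,Q)=\beta(Q,P)$: for a point $Q$ and a point $P\in\cA$ we get $P\in Q^\s \iff \beta(P,Q)=0 \iff Q\in P^\s$, and $P^\s$ is precisely the osculating plane at $P$. Hence the number of points of $\cA$ lying on the plane $Q^\s$ equals the number of osculating planes through $Q$. Combining this with the characterizations of $\cO_0,\cO_1,\cO_2,\cO_3$ in Proposition~\ref{points}, the plane $Q^\s$ contains no point of $\cA$, exactly one point, exactly two points, or exactly three points of $\cA$ according as $Q\in\cO_0$, $Q\in\cO_1$, $Q\in\cO_2$, or $Q\in\cO_3$, which is the assertion.

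The one point that needs care — the only real obstacle here — is that the osculating planes $\cA^\s$ and the planes of $\cO_1^\s$ both meet $\cA$ in exactly one point (just as the orbits $\cA$ and $\cO_1$ both lie on exactly one osculating plane), so these two plane-orbits are not separated merely by the number of points of $\cA$ they carry; they are separated because they are the $\s$-images of the two distinct point-orbits $\cA$ and $\cO_1$, equivalently because the osculating plane at $P\in\cA$ is the unique plane through $P$ containing the tangent line $\ell_P$, whereas a plane of $\cO_1^\s$ through $P$ does not contain $\ell_P$. Finally, since $\cA$ is a $(q+1)$-arc no plane contains four of its points, so the values $0,1,2,3$ are exhaustive, and the identity $(q+1)+(q^3-q)/3+(q^3-q)/2+(q^2+q)+(q^3-q)/6=q^3+q^2+q+1$ matches the number of planes of $\PG(3,q)$, confirming that nothing has been missed and completing the proof.
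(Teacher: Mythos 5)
Your proposal is correct and follows exactly the route the paper intends: the paper states this proposition without a separate proof, relying on the observation (made just before the statement, and justified in the Preliminaries by $M_{a,b,c,d}^tJM_{a,b,c,d}=J$) that the $G_h$-equivariant polarity $\s$ carries the five point-orbits of Proposition~\ref{points} bijectively onto five plane-orbits, with the symmetry of $\beta$ translating ``number of osculating planes through $Q$'' into ``number of points of $\cA$ on $Q^\s$''. Your additional remarks on distinguishing $\cA^\s$ from $\cO_1^\s$ and on the total count are sound but not needed beyond what the equivariance already gives.
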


\subsection{Point plane distribution}
In the case when $\cA$ is a twisted cubic, the point plane distribution has been determined in \cite{b2020}. Most of the combinatorial arguments used in \cite{b2020} can be adapted to the general case of a $(q+1)$-arc $\cA$. Set $\cO_{q+1} = \cA$. Denote by $r_{i,j}$ the number of planes in $\cO_j^\s$ through a point in $\cO_i$ and by $k_{i,j}$ the number of points of $\cO_i$ in a plane of $\cO_j^\s$. Hence
\begin{align} \label{eq-main}
    & |\cO_i| r_{i,j} = |\cO_j^\s| k_{i,j}.
\end{align}
Note that 
\begin{align*}
    & r_{0,q+1} = 0, \; r_{q+1,q+1} = r_{1, q+1} = 1, \; r_{2, q+1} = 2, \; r_{3, q+1} = 3,
\end{align*}
by Proposition~\ref{points}. Since a plane meets $\cA$ in at most three points, taking into account Proposition~\ref{corde}, the following results can be easily deduced.
\begin{lemma}\label{lemma-chord}
Through a real chord of $\cA$ there pass $q-1$ planes of $\cO_3^\s$ and two planes of $\cO_2^\s$, whereas through an imaginary chord of $\cA$ there pass $q+1$ planes of $\cO_1^\s$.
\end{lemma}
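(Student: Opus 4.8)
The plan is to treat real and imaginary chords separately, using only the $(q+1)$-arc property of $\cA$ (no four points coplanar, hence no three collinear), the classification of the five plane $G_h$-orbits, and Proposition~\ref{corde}.

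\emph{Real chord.} Write $r=\langle P_a,P_b\rangle$ with $P_a\neq P_b$ in $\cA$. Each of the $q+1$ planes through $r$ contains $P_a$ and $P_b$, hence meets $\cA$ in at least two and at most three points; in particular it is not an osculating plane and not a plane of $\cO_0^\s$ or $\cO_1^\s$, so it belongs to $\cO_2^\s$ or to $\cO_3^\s$. To count those in $\cO_3^\s$ I would associate to each $P_c\in\cA\setminus\{P_a,P_b\}$ the plane $\langle r,P_c\rangle$: it passes through $r$ and carries the three arc points $P_a,P_b,P_c$, distinct $P_c$ yield distinct planes (else four points of $\cA$ would be coplanar), and conversely every plane of $\cO_3^\s$ through $r$ is $\langle r,P_c\rangle$ for its third arc point. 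Hence exactly $q-1$ of the $q+1$ planes through $r$ lie in $\cO_3^\s$, and the remaining $(q+1)-(q-1)=2$ lie in $\cO_2^\s$.

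\emph{Imaginary chord.} Let $\ell$ be an imaginary chord, so $\ell$ is skew to $\cA$. I would first show that no plane through $\ell$ contains two points of $\cA$: otherwise such a plane would also contain the real chord joining them, the two coplanar lines would meet in a point $X\in\ell$, and since $\ell$ misses $\cA$ this $X\notin\cA$ would lie on two distinct chords of $\cA$, against Proposition~\ref{corde}. Likewise no plane through $\ell$ is an osculating plane: if $\ell$ lay in $P_t^\s$, it would meet the tangent line $\ell_{P_t}\subset P_t^\s$ in a point $Y$, and again $Y\notin\cA$ would lie on the two distinct chords $\ell$ and $\ell_{P_t}$, contradicting Proposition~\ref{corde}. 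Consequently every plane through $\ell$ meets $\cA$ in zero or one point and lies in $\cO_0^\s$ or $\cO_1^\s$. Finally, for each $P\in\cA$ the plane $\langle\ell,P\rangle$ is the unique plane through $\ell$ containing $P$ (note $P\notin\ell$), and these $q+1$ planes are pairwise distinct, since a coincidence would put two arc points into a single plane through $\ell$; so they exhaust the planes through $\ell$ and each contains exactly one point of $\cA$. Therefore all $q+1$ planes through $\ell$ lie in $\cO_1^\s$.

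The counting and distinctness steps are routine; the point that requires care is the claim, in the imaginary case, that a plane through $\ell$ cannot contain two points of $\cA$ (and similarly cannot be osculating), which is exactly where the skewness of $\ell$ to $\cA$ is combined with Proposition~\ref{corde}. If the elementary argument feels too terse, the imaginary case can be rerun in $\PG(3,q^2)$: there $\bar\ell$ meets $\bar\cA$ in $\{\bar P_t,\bar P_{t^q}\}$ with $t\in\F_{q^2}\setminus\F_q$, the $q+1$ planes $\langle\bar\ell,\bar P_s\rangle$ with $s\in\F_q\cup\{\infty\}$ are precisely the extensions of the $q+1$ planes through $\ell$, and each of them meets $\bar\cA$ — hence $\cA$ — in a single point of $\PG(3,q)$, again yielding $\cO_1^\s$.
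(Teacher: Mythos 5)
Your proof is correct and follows exactly the route the paper intends: the real-chord count comes from the arc property (no four points of $\cA$ coplanar) via the bijection $P_c\mapsto\langle r,P_c\rangle$, and the imaginary-chord count comes from combining skewness with Proposition~\ref{corde} to rule out planes in $\cO_0^\s$, $\cO_2^\s$, $\cO_3^\s$ and osculating planes. The paper states the lemma as "easily deduced" from precisely these two facts, so your write-up simply supplies the details it leaves implicit.
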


\begin{lemma}\label{lemma-tec}
The following hold.
\begin{enumerate}
\item[1)] $r_{i, q+1} + r_{i,1} + 2r_{i,2} + 3r_{i,3} = (q+1)^2$, $i \in \{0,1,2,3\}$.
\item[2)] For $q \equiv 1 \pmod 3$, it holds
\begin{align*}
    & r_{2,2} + 3r_{2,3} = r_{1,2} + 3r_{1,3} = \frac{q(q+1)}{2}, \\
    & r_{0,2} + 3r_{0,3} = r_{3,2} + 3r_{3,3} = \frac{q(q+3)}{2}.
\end{align*}
\item[3)] For $q \equiv -1 \pmod 3$, it holds
\begin{align*}
    & r_{2,2} + 3r_{2,3} = r_{0,2}+3r_{0,3} = r_{3,2}+3r_{3,3} = \frac{q(q+1)}{2}, \\
    & r_{1,2} + 3r_{1,3} = \frac{q(q+3)}{2}.
\end{align*}
\end{enumerate}
\end{lemma}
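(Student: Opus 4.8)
The plan is to prove all three parts by elementary double counting, using only Propositions~\ref{corde} and \ref{points}, Corollary~\ref{chords}, and the description of osculating planes from the Preliminaries.

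For part 1), fix $i \in \{0,1,2,3\}$ and a point $P \in \cO_i$, so $P \notin \cA$. I count in two ways the pairs $(\pi, Q)$ with $\pi$ a plane through $P$ and $Q$ a point of $\cA$ lying on $\pi$. Summing over the $q+1$ points $Q \in \cA$: each line $\langle P, Q\rangle$ lies on $q+1$ planes, giving $(q+1)^2$ pairs. Summing over planes through $P$: a plane in $\cO_{q+1}^\s$ (an osculating plane) meets $\cA$ in exactly one point, a plane in $\cO_j^\s$ meets $\cA$ in exactly $j$ points for $j \in \{0,1,2,3\}$, so the number of pairs is $r_{i,q+1} + r_{i,1} + 2r_{i,2} + 3r_{i,3}$. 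Equating the two expressions yields 1).

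For parts 2) and 3), the key remark is that a plane of $\cO_2^\s$ contains exactly one real chord of $\cA$ (the line joining its two $\cA$-points), a plane of $\cO_3^\s$ contains exactly three real chords (its three $\cA$-points are not collinear, since otherwise a plane through that line and a fourth point of $\cA$ would contain four points of $\cA$, as $q+1 \ge 4$), and osculating planes as well as planes of $\cO_0^\s \cup \cO_1^\s$ contain no real chord. Hence, for $P \in \cO_i$, the quantity $r_{i,2} + 3r_{i,3}$ equals the number of pairs $(\pi, s)$ with $\pi$ a plane through $P$ and $s \subseteq \pi$ a real chord. I count these by splitting on whether $P \in s$. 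By Proposition~\ref{corde}, $P$ lies on exactly one chord of $\cA$; writing $\epsilon_i = 1$ if the points of $\cO_i$ lie on real chords and $\epsilon_i = 0$ otherwise, the unique chord through $P$ is real in exactly $\epsilon_i$ cases, and if real it lies on $q+1$ planes, while each of the remaining $q(q+1)/2 - \epsilon_i$ real chords spans a unique plane with $P$. This gives $r_{i,2} + 3r_{i,3} = \epsilon_i(q+1) + \bigl(q(q+1)/2 - \epsilon_i\bigr) = q(q+1)/2 + q\epsilon_i$.

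Finally I read off $\epsilon_i$ from Corollary~\ref{chords}, noting first that the points of $\cO_2 = \cQ^+(3,q)\setminus\cA$ lie on tangent lines, hence on no real chord by Proposition~\ref{corde}, so $\epsilon_2 = 0$ in both cases. When $q \equiv 1 \pmod 3$, Corollary~\ref{chords} gives $\epsilon_0 = \epsilon_3 = 1$ and $\epsilon_1 = 0$, yielding $r_{0,2}+3r_{0,3} = r_{3,2}+3r_{3,3} = q(q+3)/2$ and $r_{1,2}+3r_{1,3} = r_{2,2}+3r_{2,3} = q(q+1)/2$; when $q \equiv -1 \pmod 3$, it gives $\epsilon_1 = 1$ and $\epsilon_0 = \epsilon_3 = 0$, yielding $r_{1,2}+3r_{1,3} = q(q+3)/2$ and $r_{0,2}+3r_{0,3} = r_{3,2}+3r_{3,3} = r_{2,2}+3r_{2,3} = q(q+1)/2$. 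This is exactly 2) and 3). I do not expect a genuine obstacle here; the only thing that needs care is the bookkeeping of how many points of $\cA$ and how many real chords lie in each type of plane — in particular the non-collinearity of the three $\cA$-points of an $\cO_3^\s$-plane and the uniqueness of the chord through a point outside $\cA$ — but both follow at once from the preliminaries.
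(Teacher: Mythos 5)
Your proof is correct and follows essentially the same double-counting argument as the paper: part 1) is the identical count of pairs (plane through $P$, point of $\cA$ in that plane), and parts 2)--3) rest on the same count of incidences between planes through $P$ and real chords, combined with Proposition~\ref{corde} and Corollary~\ref{chords}. The only difference is organizational: you include the real chord through $P$ in the count (contributing $q+1$ planes directly), whereas the paper excludes it and compensates with correction terms derived from Lemma~\ref{lemma-chord}; your bookkeeping is marginally cleaner but the idea is the same.
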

\begin{proof}
To prove {\em 1)}, let $P$ be a point of $\PG(3, q) \setminus \cA$ and let us count in two ways the pairs $(Q, \sigma)$, where $Q$ is a point of $\cA$, $\sigma$ is a plane and the line $\langle P, Q \rangle$ is contained in $\sigma$. The point $Q$ can be chosen in $q+1$ ways and for a fixed $Q$ there are $q+1$ planes containing both $P$ and $Q$. Hence on the one hand there are $(q+1)^2$ pairs. On the other hand, if $P \in \cO_i$, where $i \in \{0,1,2,3\}$, then the number of these pairs equals $r_{i, q+1} + r_{i, 1} + 2r_{i,2} + 3r_{i,3}$. 

In order to show {\em 2)} and {\em 3)}, by Corollary~\ref{chords}, it is enough to verify that 
\begin{align*}
r_{i,2} + 3 r_{i,3} =
\begin{cases}
    \frac{q(q+1)}{2} & \mbox{ if } \cO_i \mbox{ consists of points not on real chords}, \\
    \frac{q(q+3)}{2} & \mbox{ if } \cO_i \mbox{ consists of points off } \cA \mbox{ on real chords}. 
\end{cases}
\end{align*}
Let $P$ be a point of $\PG(3, q) \setminus \cA$ and let us count in two ways the pairs $(\ell, \sigma)$, where $\ell$ is a real chord, $P \notin \ell$ and $\sigma = \langle P, \ell \rangle$ is a plane such that $|\sigma \cap \cA| \in \{2, 3\}$. If $P$ does not belong to any real chord, then each of the $q(q+1)/2$ real chords gives rise to a plane $\sigma$ which contains either one or three real chords. Hence equation $r_{i,2} + 3 r_{i,3} = q(q+1)/2$ follows. If $P$ belongs to the real chord $r$, then each of the $q(q+1)/2 - 1$ real chords distinct from $r$ spans with $P$ a plane $\sigma$. By Lemma~\ref{lemma-chord}, among these planes there are $q-1$ of them which contain precisely two chords distinct from $r$. Therefore in this case 
\begin{align*}
    & (r_{i,2}-2) + 3(r_{i,3} - (q-1)) + 2(q-1) = \frac{q(q+1)}{2} - 1
\end{align*}
holds true and equation $r_{i,2} + 3 r_{i,3} = q(q+3)/2$ follows. 
\end{proof}

\begin{prop}
\begin{align*}
    & r_{q+1, d} = 
    \begin{cases}
        1 & \mbox{ if } d = q+1, \\
        0 & \mbox{ if } d = 0, \\
        \frac{q(q-1)}{2} & \mbox{ if } d \in \{1,3\}, \\
        2q & \mbox{ if } d = 2.
    \end{cases}
\end{align*}
\end{prop}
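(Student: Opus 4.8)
The plan is to fix a point $P \in \cA$ and distribute the $q^2+q+1$ planes of $\PG(3,q)$ through $P$ among the five plane orbits, using that a plane meets $\cA$ in at most three points. Two values are immediate. A plane of $\cO_0^\s$ contains no point of $\cA$, hence none passes through $P$ and $r_{q+1,0}=0$. For the osculating planes I would use the polarity: $R^\s$ passes through $P$ iff $\beta(R,P)=0$ iff $R\in P^\s$; since the osculating plane $P^\s$ meets $\cA$ only in $P$, the unique osculating plane through $P$ is $P^\s$, so $r_{q+1,q+1}=1$.

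For the planes of $\cO_3^\s$ through $P$: as $\cA$ is a $(q+1)$-arc, no three of its points are collinear and no four are coplanar, so a plane $\sigma\in\cO_3^\s$ with $P\in\sigma$ meets $\cA$ in $P$ together with two further points, and $\sigma\mapsto(\sigma\cap\cA)\setminus\{P\}$ is a bijection onto the $2$-subsets of $\cA\setminus\{P\}$; thus $r_{q+1,3}=\binom{q}{2}=q(q-1)/2$. For the planes of $\cO_2^\s$ through $P$: such a $\sigma$ meets $\cA$ in $P$ and exactly one further point $Q$, and $Q$ is determined by $\sigma$ (a plane containing two real chords through $P$ would contain three points of $\cA$). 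Conversely, each of the $q$ points $Q\in\cA\setminus\{P\}$ yields the real chord $\langle P,Q\rangle$, through which, by Lemma~\ref{lemma-chord}, pass exactly two planes of $\cO_2^\s$; these are pairwise distinct as $Q$ varies, so $r_{q+1,2}=2q$.

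Finally $r_{q+1,1}$ follows by subtraction: every plane through $P$ not counted above meets $\cA$ only in $P$ and is distinct from $P^\s$, hence lies in $\cO_1^\s$, giving $r_{q+1,1}=q^2+q+1-1-2q-q(q-1)/2=q(q-1)/2$. There is no genuine obstacle; the only point requiring care is separating, among the planes through $P$ meeting $\cA$ in exactly one point, the single osculating plane $P^\s$ from the planes of $\cO_1^\s$, and this is precisely what the polarity computation of the first paragraph resolves.
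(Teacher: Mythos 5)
Your proof is correct and follows essentially the same route as the paper: you count the planes of $\cO_2^\s$ and $\cO_3^\s$ through $P$ via the $q$ real chords through $P$ (using Lemma~\ref{lemma-chord}), and obtain $r_{q+1,1}$ by subtraction from $q^2+q+1$. The only cosmetic differences are that you realize the $\cO_3^\s$ count as a bijection with $2$-subsets of $\cA\setminus\{P\}$ rather than dividing the chord--plane incidences by two, and that you rederive $r_{q+1,q+1}=1$ from the polarity, a fact the paper had already recorded before the proposition.
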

\begin{proof}
The value $r_{q+1, 0}$, is easily determined. Through a point $Q \in \cA$ there are $q$ real chords. By Lemma~\ref{lemma-chord}, through a real chord there pass two planes of $\cO_2^\s$ and $q-1$ planes of $\cO_3^\s$. Therefore $r_{q+1, 2} = 2q$ and $r_{q+1, 3} = q(q-1)/2$, since a plane of $\cO_3$ contains two real chords through $Q$. Finally $r_{q+1, 1} = q^2+q+1 - (1 - 2q - q(q-1)/2) = q(q-1)/2$.
\end{proof}

\begin{prop}
If $q \equiv \xi \pmod 3$, $\xi \in \{\pm 1\}$, then 
\begin{align*}
    & r_{1,d} = 
    \begin{cases}
        1 & \mbox{ if } d=q+1, \\
        \frac{q(q-\xi)}{3} & \mbox{ if } d=0, \\
        \frac{q(q+\xi)}{2} & \mbox{ if } d=1, \\ 
        q & \mbox{ if } d=2, \\
        \frac{q(q-\xi)}{6} & \mbox{ if } d=3. \\
    \end{cases}
    & r_{2,d} = 
    \begin{cases}
        2 & \mbox{ if } d=q+1, \\
        \frac{q^2-1}{3} & \mbox{ if } d=0, \\
        \frac{q(q-1)}{2} & \mbox{ if } d=1, \\ 
        2q-1 & \mbox{ if } d=2, \\
        \frac{q^2-3q+2}{6} & \mbox{ if } d=3. \\
    \end{cases}
\end{align*}
\end{prop}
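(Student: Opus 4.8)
The plan is to compute the two rows $r_{1,d}$ and $r_{2,d}$ by combining the transitivity of $G_h$ on each point orbit with the linear constraints already established in Lemma~\ref{lemma-tec} and the double-counting identity \eqref{eq-main}, together with direct combinatorial counts using Proposition~\ref{corde} and Lemma~\ref{lemma-chord}. The key observation is that both rows are pinned down by three independent pieces of data: first, $r_{i,q+1}$ is known from Proposition~\ref{points} ($r_{1,q+1}=1$, $r_{2,q+1}=2$); second, part~1) of Lemma~\ref{lemma-tec} gives $r_{i,1}+2r_{i,2}+3r_{i,3}=(q+1)^2-r_{i,q+1}$; third, parts~2)--3) of Lemma~\ref{lemma-tec} give $r_{i,2}+3r_{i,3}$ as either $q(q+1)/2$ or $q(q+3)/2$ depending on whether $\cO_i$ meets the real chords, which by Corollary~\ref{chords} is governed by the residue of $q$ modulo $3$. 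Subtracting the third relation from the second immediately yields $r_{i,1}+r_{i,2}$, so it remains only to determine one more quantity, for which I would compute $r_{i,2}$ directly.

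For the direct computation of $r_{1,2}$ and $r_{2,2}$, recall that a plane of $\cO_2^\s$ contains exactly two points of $\cA$, hence exactly one real chord. So $r_{i,2}$ counts the real chords $s$ such that $\langle P,s\rangle$ is a plane meeting $\cA$ in precisely $s\cap\cA$, for $P$ a representative of $\cO_i$. First I would take a convenient representative. For $\cO_2=\cQ^+(3,q)\setminus\cA$, a point $P$ lies on exactly one tangent line and, as in the proof of Proposition~\ref{corde}, real chords through $P$ would force a plane with four arc points, so no real chord passes through $P$; thus every real chord $s$ gives a plane $\langle P,s\rangle$ that contains either one or three real chords, and counting arc-point pairs as in Lemma~\ref{lemma-tec} forces $r_{2,2}+3r_{2,3}=q(q+1)/2$ while a separate tangent-line argument (the tangent through $P$ lies in the osculating planes at its two arc points, which are in $\cO_2^\s$ only if\dots) pins $r_{2,2}$. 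More robustly, I would count pairs $(s,\sigma)$ with $s$ a real chord, $\sigma=\langle P,s\rangle$, $|\sigma\cap\cA|=2$: the plane $\sigma$ then contains a unique real chord, namely $s$ itself, so this count is exactly $r_{2,2}$; subtracting from $r_{2,2}+3r_{2,3}=q(q+1)/2$ the contribution $3r_{2,3}$ (planes with three arc points, each containing three real chords through none of which passes $P$) and using that the $q(q+1)/2$ real chords split as $r_{2,2}$ one-chord planes plus $r_{2,3}$ three-chord planes with the three-chord ones counted thrice gives $r_{2,2}+3r_{2,3}=q(q+1)/2$; combined with a count of how many of these planes actually contain only two arc points through the tangent structure at $P$, one extracts $r_{2,2}=2q-1$. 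For $\cO_1$, the representative is a point on exactly one osculating plane: by Corollary~\ref{chords} it lies on a real chord if $q\equiv-1\pmod3$ and on an imaginary chord if $q\equiv1\pmod3$, and in either case I would count, as in Lemma~\ref{lemma-tec}, the pairs $(r,\sigma)$ to isolate $r_{1,2}=q$.

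Once $r_{i,2}$ is known, everything else follows by back-substitution: from $r_{i,2}+3r_{i,3}=\tfrac{q(q+\xi\varepsilon_i)}{2}$ (with the sign dictated by Corollary~\ref{chords}) solve for $r_{i,3}$, then from part~1) of Lemma~\ref{lemma-tec} solve for $r_{i,1}$, and finally $r_{i,0}$ is forced by $\sum_d r_{i,d}=q^2+q+1$ (the total number of planes), giving $r_{1,0}=q(q-\xi)/3$ and $r_{2,0}=(q^2-1)/3$. I expect the main obstacle to be the direct determination of $r_{i,2}$: the clean linear algebra above leaves exactly one degree of freedom per row, and closing it requires a careful geometric count of how the real chords through the arc points near a representative $P$ distribute among planes with two versus three arc points, keeping track of the tangent line at $P$ when $P\in\cO_2$. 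The remaining steps are routine arithmetic, and one should double-check the final answers against \eqref{eq-main} and against the known twisted-cubic case $h=1$ of \cite{b2020} as a consistency test; in particular the divisibility by $3$ and $6$ in the stated values is exactly what Proposition~\ref{points} predicts, which is a reassuring sanity check.
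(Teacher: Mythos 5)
There is a genuine gap. You correctly set up the linear system: $r_{i,q+1}$ is known, Lemma~\ref{lemma-tec}~\emph{1)} gives $r_{i,1}+2r_{i,2}+3r_{i,3}=(q+1)^2-r_{i,q+1}$, and Lemma~\ref{lemma-tec}~\emph{2),3)} gives $r_{i,2}+3r_{i,3}$, which leaves exactly one degree of freedom per row. But the step you propose to close it --- a direct computation of $r_{i,2}$ --- is never actually carried out. Your ``more robust'' double count of pairs $(s,\sigma)$ with $s$ a real chord and $\sigma=\langle P,s\rangle$ only reproduces the identity $r_{2,2}+3r_{2,3}=q(q+1)/2$, i.e.\ the very relation you already have from Lemma~\ref{lemma-tec}; the paragraph is circular and ends by asserting $r_{2,2}=2q-1$ and $r_{1,2}=q$ rather than deriving them. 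The auxiliary tangent-line remark is also garbled: a tangent line meets $\cA$ in exactly one point, and osculating planes lie in $\cO_{q+1}^\s$, not $\cO_2^\s$, so ``the osculating planes at its two arc points, which are in $\cO_2^\s$'' does not parse into a usable count.

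The paper closes the system with a different, and genuinely independent, piece of data: it computes $k_{1,0}$ and $k_{2,0}$ directly. A plane $\sigma\in\cO_0^\s$ contains no point of $\cA$, hence no real chord and no tangent line, and by Lemma~\ref{lemma-chord} no imaginary chord either; so each imaginary chord (if $q\equiv1\pmod3$), each real chord (if $q\equiv-1\pmod3$), and each tangent line meets $\sigma$ in exactly one point. Since by Proposition~\ref{corde} every point off $\cA$ lies on a unique chord, this gives $k_{1,0}=q(q-\xi)/2$ and $k_{2,0}=q+1$ exactly, whence $r_{1,0}$ and $r_{2,0}$ via \eqref{eq-main}, and then $\sum_d r_{i,d}=q^2+q+1$ supplies the missing fourth equation; the three relations you listed then determine $r_{i,1},r_{i,2},r_{i,3}$. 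If you want to keep your plan of pinning down $r_{i,2}$ first instead, you must produce an actual second equation involving $r_{i,2}$ and $r_{i,3}$ separately (for instance via $k_{i,2}$ and \eqref{eq-main}), which your current argument does not supply.
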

\begin{proof}
By Corollary~\ref{chords}, if $q \equiv 1 \pmod{3}$ (or $q \equiv -1 \pmod{3}$), $\cO_1$ consists of the points on imaginary chords (or, together with $\cA$, of the points on real chords). On the other hand, by Lemma~\ref{lemma-chord}, a plane $\sigma$ in $\cO_0^\s$ does not contain any imaginary (or real) chord and hence each of the $q(q-1)/2$ (or $q(q+1)/2$) imaginary (or real) chords of $\cA$ has exactly one point in common with $\sigma$. It follows that Equation~\ref{eq-main} reads as $\frac{q(q^2-1)}{2} r_{1,0} = \frac{q(q^2-1)}{3} \frac{q(q-\xi)}{2}$ and therefore $r_{1, 0} = q(q-\xi)/3$. As a consequence we have that $1 + r_{1,1}+r_{1,2}+r_{1,3} = q^2+q+1 - q(q-\xi)/3 = (2q^2+(3+\xi)q+3)/3$, which combined with Lemma~\ref{lemma-tec} {\em 1), 2)} (or Lemma~\ref{lemma-tec} {\em 1), 3)}), gives the required values of $r_{1,1}, r_{1,2}, r_{1,3}$.   

Analogously, no tangent line is contained in a plane $\sigma$ of $\cO_0^\s$ and hence each of the $q+1$ tangent lines has exactly one point in common with $\sigma$. In this case Equation~\ref{eq-main} becomes as $q(q+1) r_{2,0} = \frac{q(q^2-1)}{3} (q+1)$ and therefore $r_{2, 0} = (q^2-1)/3$. Moreover, $2 + r_{2,1} + r_{2,2} + r_{2,3} = q^2+q+1 - (q^2-1)/3 = (2q^2+3q+4)/3$, which together with Lemma~\ref{lemma-tec} {\em 1), 2), 3)} provides the values of $r_{2,1}, r_{2,2}, r_{2,3}$. 
\end{proof}

\begin{prop}
If $q \equiv \xi \pmod 3$, $\xi \in \{\pm 1\}$, then 
\begin{align*}
    & r_{0,d} = 
    \begin{cases}
        0 & \mbox{ if } d=q+1, \\
        \frac{q^2+\xi q+1}{3} & \mbox{ if } d=0, \\
        \frac{q(q-\xi)}{2} & \mbox{ if } d=1, \\ 
        q+1 & \mbox{ if } d=2, \\
        \frac{q^2+\xi q-2}{6} & \mbox{ if } d=3. \\
    \end{cases}
    & r_{3,d} = 
    \begin{cases}
        3 & \mbox{ if } d=q+1, \\
        \frac{q^2+\xi q-2}{3} & \mbox{ if } d=0, \\
        \frac{q(q-\xi)}{2} & \mbox{ if } d=1, \\ 
        q-2 & \mbox{ if } d=2, \\
        \frac{q^2+ \xi q+4}{6} & \mbox{ if } d=3. \\
    \end{cases}
\end{align*}
\end{prop}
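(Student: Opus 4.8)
The plan is to follow the pattern of the three preceding propositions. By Proposition~\ref{points} we already know $r_{0,q+1}=0$ and $r_{3,q+1}=3$. The first real task is the column $d=0$. Fix a plane $\sigma\in\cO_0^\s$ and write $k_{i,0}$ for the number of points of $\cO_i$ on $\sigma$; since $|\cO_0^\s|=|\cO_0|=(q^3-q)/3=2|\cO_3|$, \eqref{eq-main} gives $r_{0,0}=k_{0,0}$ and $r_{3,0}=2k_{3,0}$, so it suffices to compute $k_{0,0}$ and $k_{3,0}$. Now $\sigma$ contains no point of $\cA$, and the values $k_{1,0}=q(q-\xi)/2$ and $k_{2,0}=q+1$ are what \eqref{eq-main} produces from the already-determined $r_{1,0}=q(q-\xi)/3$ and $r_{2,0}=(q^2-1)/3$. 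To get $k_{3,0}$ I would double-count the incident pairs $(\rho,M)$ with $\rho$ an osculating plane and $M\in\sigma\cap\rho$: since $\sigma$ is not an osculating plane, each of the $q+1$ osculating planes meets $\sigma$ in a line, giving $(q+1)^2$ pairs; on the other hand a point of $\cO_j\cap\sigma$ lies on exactly $j$ osculating planes and $\sigma\cap\cA=\emptyset$, so the count is $k_{1,0}+2k_{2,0}+3k_{3,0}$. (This is just the $\s$-dual of Lemma~\ref{lemma-tec}~1) applied to $\sigma$.) Hence $k_{3,0}=(q^2+\xi q-2)/6$ and $k_{0,0}=q^2+q+1-k_{1,0}-k_{2,0}-k_{3,0}=(q^2+\xi q+1)/3$, giving $r_{0,0}=(q^2+\xi q+1)/3$ and $r_{3,0}=(q^2+\xi q-2)/3$.

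For the entries $d\in\{1,2,3\}$ I would, exactly as in the proofs of the two previous propositions, solve for each $i\in\{0,3\}$ (and each residue of $q$ modulo $3$) the linear system formed by the total count $r_{i,q+1}+r_{i,0}+r_{i,1}+r_{i,2}+r_{i,3}=q^2+q+1$, by Lemma~\ref{lemma-tec}~1), and by Lemma~\ref{lemma-tec}~2) or 3) according as $q\equiv1$ or $-1\pmod3$. The last ingredient uses Corollary~\ref{chords}: for $q\equiv1\pmod3$ both $\cO_0$ and $\cO_3$ consist of points off $\cA$ on real chords, so $r_{i,2}+3r_{i,3}=q(q+3)/2$; for $q\equiv-1\pmod3$ both consist of points on imaginary chords, so $r_{i,2}+3r_{i,3}=q(q+1)/2$ (the two right-hand sides being $q(q+2+\xi)/2$). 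Solving yields $r_{0,1}=r_{3,1}=q(q-\xi)/2$, $r_{0,2}=q+1$, $r_{3,2}=q-2$, $r_{0,3}=(q^2+\xi q-2)/6$ and $r_{3,3}=(q^2+\xi q+4)/6$, which are the asserted values. A useful cross-check — and an alternative route to the $d=0$ column — is that $\s$ is an incidence-reversing bijection taking the point-orbit $\cO_i$ onto the plane-orbit $\cO_i^\s$, so that $k_{i,j}=r_{j,i}$ and $|\cO_i|r_{i,j}=|\cO_j|r_{j,i}$ by \eqref{eq-main}; substituting the already-known rows $r_{q+1,\bullet}$, $r_{1,\bullet}$, $r_{2,\bullet}$ recovers $r_{0,1}$, $r_{0,2}$, $r_{3,1}$, $r_{3,2}$ and the relation $r_{3,0}=2r_{0,3}$ directly, after which $r_{0,3}$ follows from Lemma~\ref{lemma-tec}~1) with $i=0$, and $r_{0,0}$, $r_{3,3}$ from the two total counts.

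I expect the only genuinely substantive point to be the $d=0$ column, i.e. the determination of $k_{0,0}$ and $k_{3,0}$: everything hinges on the observation that a plane of $\cO_0^\s$, being disjoint from $\cA$, meets every osculating plane in a line whose $q+1$ points are again off $\cA$, which is exactly what produces the clean identity $k_{1,0}+2k_{2,0}+3k_{3,0}=(q+1)^2$. Once this is in hand, the remaining values reduce to solving small linear systems, with the $q\equiv\pm1\pmod3$ bookkeeping that already appears in the preceding proofs.
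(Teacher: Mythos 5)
Your proposal is correct and follows essentially the same strategy as the paper: a double count of osculating-plane incidences on a fixed plane whose $\cO_1$- and $\cO_2$-contents are already known via \eqref{eq-main}, followed by the linear relations of Lemma~\ref{lemma-tec} and the total count $q^2+q+1$. The only (immaterial) difference is that you anchor the double count at a plane of $\cO_0^\s$ to get the $d=0$ column first, whereas the paper anchors it at a plane of $\cO_3^\s$ to get $k_{3,3}$ and $k_{0,3}$ first and recovers $r_{0,0}$, $r_{3,0}$ by difference at the end.
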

\begin{proof}
First we determine $r_{3,3}$ and $r_{0,3}$. Fix a plane $\sigma$ in $\cO_3^\s$ and let us count in two ways the pairs $(Q, \gamma)$, where $Q$ is a point, $\gamma$ is an osculating plane and $Q \in \sigma \cap \gamma$. Since there are $q+1$ osculating planes and for each of these planes there are $q+1$ choices for the point $Q$, we have $(q+1)^2$ of such pairs. On the other hand, $\sigma$ contains $3$ points of $\cA$, $q(q-\xi)/2$ points of $\cO_1$, $q-2$ points of $\cO_2$ and $k_{3,3}$ points of $\cO_3$. It follows that 
\begin{align*}
    & 1 \times 3 + 1 \times \frac{q(q-\xi)}{2} + 2 \times (q-2) + 3 \times k_{3,3} = (q+1)^2.
\end{align*}
Therefore $k_{3,3} = (q^2+\xi q+4)/6$ and $k_{0, 3} = q^2+q+1 - 3 - q(q-\xi)/2 - (q-2) - (q^2+\xi q +4)/6 = (q^2+\xi q -2)/3$. By using Equation~\eqref{eq-main}, we get $r_{3,3} = (q^2+\xi q+4)/6$ and $r_{0,3} = (q^2+\xi q -2)/6$.

Next, the values $r_{0, 2}$, $r_{3, 2}$ can be recovered from Lemma~\ref{lemma-tec} {\em 2), 3)}, $r_{0, 1}$, $r_{3, 1}$ from Lemma~\ref{lemma-tec} {\em 1)} and $r_{0,0}$, $r_{3,0}$ by difference.
\end{proof}

The results so obtained are summarized in Tables \ref{rij} and \ref{kij}, where $q \equiv \xi \pmod 3$, $\xi \in \{\pm 1\}$.

\begin{table}[ht]\caption{The number of planes of $\cO_j^\s$ through a point of $\cO_i$}\label{rij}
\centering 
\begin{tabular}{|c|c|c|c|c|c|}
\hline 
$\quad$ & $\cA$ & $\cO_0$ & $\cO_1$  &  $\cO_2$ & $\cO_3$   \\
\hline 
$\cO_{q+1}^\s$ & $1$ & $0$ & $1$ & $2$ & $3$ \\
$\cO_0^\s$ & $0$ & $(q^2+\xi q+1)/3$ & $(q^2-\xi q)/3$ & $(q^2-1)/3$ & $(q^2+ \xi q-2)/3$ \\
$\cO_1^s$ & $(q^2-q)/2$ & $(q^2- \xi q)/2$ & $(q^2+ \xi q)/2$ & $(q^2-q)/2$ & $(q^2- \xi q)/2$ \\
$\cO_2^\s$ & $2q$ & $q+1$ & $q$ & $2q-1$ & $q-2$ \\
$\cO_3^\s$ & $(q^2-q)/2$ & $(q^2+\xi q -2)/6$ & $(q^2-\xi q)/6$ & $(q^2-3q+2)/6$ & $(q^2+\xi q+4)/6$ \\
\hline
\end{tabular} 
\end{table}

\begin{table}[ht]\caption{The number of points of $\cO_i$ in a plane of $\cO_j^\s$}\label{kij}
\centering 
\begin{tabular}{|c|c|c|c|c|c|}
\hline 
$\quad$ & $\cA$ & $\cO_0$  & $\cO_1$ & $\cO_2$ & $\cO_3$  \\
\hline 
$\cO_{q+1}^\s$ & $1$ & $0$ &$(q^2-q)/2$ & $2q$ & $(q^2-q)/2$  \\
$\cO_0^\s$ & $0$ & $(q^2+ \xi q+1)/3$ & $(q^2- \xi q)/2$ & $q+1$ & $(q^2+ \xi q-2)/6$  \\
$\cO_1^\s$ & $1$ & $(q^2- \xi q)/3$ & $(q^2+ \xi q)/2$ & $q$ & $(q^2- \xi q)/6$  \\
$\cO_2^\s$ & $2$ & $(q^2-1)/3$ & $(q^2-q)/2$ & $2q-1$ & $(q^2-3q+2)/6$  \\
$\cO_3^\s$ & $3$ & $(q^2+ \xi q-2)/3$ & $(q^2- \xi q)/2$ & $q-2$ & $(q^2+ \xi q+4)/6$ \\
\hline
\end{tabular} 
\end{table}

\section{Line orbits}\label{sec:lines}

In this section we determine the orbits of $G_h$ on the lines of $\PG(3, q)$. In order to do that we consider the Pl\"ucker correspondence, which sends the lines of $\PG(3,q)$ to the points of the Klein quadric 
\begin{align*}
    & \cQ^+(5,q): X_1 X_6 + X_2 X_5 + X_3 X_4 = 0.    
\end{align*}
Under this embedding, the $q+1$ lines tangent to $\cA$ are mapped to the $q+1$ points of the conic
\begin{align*}
    & \cX = \pi \cap \cQ^+(5, q) = \{(1,0,t, t, 0, t^2) \mid t \in \F_q\} \cup \{(0,0,0,0,0,1)\},
\end{align*}
whereas the lines of the opposite regulus of $\cQ^+(3, q)$ are mapped to the points of the conic
\begin{align*}
    & \cX' = \pi^\perp \cap \cQ^+(5, q) = \{(0,1,t, t,t^2,0) \mid t \in \F_q\} \cup \{(0,0,0,0,1,0)\}. 
\end{align*}
Here $\pi$ denotes the plane $X_2 = X_5 = X_3+X_4 = 0$ and $\perp$ is the polarity of $\PG(5, q)$ associated with $\cQ^+(5, q)$. The $(q+1)(q^2+1)$ generators of $\cW(3, q)$ are sent to the points of the parabolic quadric $\cQ(4, q) = \cQ^+(5, q) \cap \Pi$, where $\Pi: X_3+X_4 = 0$. Moreover, under the Pl\"ucker map, points of $\cW(3, q)$ correspond to lines of $\cQ(4, q)$ and, hence, if $\ell$ is a line of $\PG(3, q)$ that is not a generator of $\cW(3, q)$, then $\ell$ and $\ell^\perp$ are mapped to the two reguli of a hyperbolic quadric contained in $\cQ(4, q)$. To the group $G_h$, there corresponds a group $\tilde{G}_{h} \le \PGO^+(6, q)$ represented by the matrices  
\begin{align*}
    & \tilde{M}_{a,b,c,d} = 
    \begin{pmatrix} 
        a^{2^{h+1}} & 0 & a^{2^h}b^{2^h} & a^{2^h}b^{2^h} & 0 & b^{2^{h+1}} \\
        0 & a^2 & ab & ab & b^2 & 0 \\
        a^{2^h}c^{2^h} & ac & b^{2^h}c^{2^h} + bc + 1 & b^{2^h}c^{2^h} + bc & bd & b^{2^h}d^{2^h} \\ 
        a^{2^h}c^{2^h} & ac & b^{2^h}c^{2^h} + bc & b^{2^h}c^{2^h} + bc + 1 & bd & b^{2^h}d^{2^h} \\ 
        0 & c^2 & cd & cd & d^2 & 0 \\ 
        c^{2^{h+1}} & 0 & c^{2^h}d^{2^h} & c^{2^h}d^{2^h} & 0 & d^{2^{h+1}} 
    \end{pmatrix}
\end{align*}
with $a,b,c,d \in \F_q$, $ad+bc = 1$. Note that the group $\tilde{G}_h$ acts faithfully on both planes $\pi$ and $\pi^\perp$. Hence $\tilde{G}_h$ induces on $\pi$ (or $\pi^\perp$) a group isomorphic to $\PGL(2, q)$ acting in its natural representation on the $q+1$ point of $\cX$ (or $\cX'$). Furthermore, the group $\tilde{G}_h$ fixes the parabolic quadric $\cQ(4, q)$, each of the two conics $\cX$ and $\cX'$ and the hypersurface $\cY$ of $\Pi$:
\begin{align*}
& \cY = \{P \in \Pi \mid F_h(P) = 0\}, 
\end{align*}
where
\begin{align*}
& F_h(X_1,X_2,X_3,X_4,X_5,X_6) = X_1X_5^{2^h} + X_2^{2^h} X_6. 
\end{align*}
The hypersurface $\cY$ consists of $q+1$ planes through the point $\Pi^\perp = (0,0,1,1,0,0)$. Each of these planes intersects $\cQ(4, q)$ in exactly one of the following $q+1$ lines
\begin{align*}
& \langle (0,1, x, x, x^2, 0), (1,0,x^{2^h},x^{2^h},0,x^{2^{h+1}}) \rangle, \, x \in \F_q, \; \langle (0,0,0,0,1,0), (0,0,0,0,0,1) \rangle .
\end{align*}
These $q+1$ lines are obtained by joining a point of $\cX$ and a point of $\cX'$; they are pairwise disjoint and hence $|\cY \cap \cQ(4, q)| = (q+1)^2$. Taking into account Proposition~\ref{points}, the following result holds.

\begin{prop}
The group $\tilde{G}_h$ has five orbits on lines of $\cQ(4, q)$:
\begin{itemize}
    \item $\cL_{q+1}$ of size $q+1$, consisting of the lines of $\cY$;
    \item $\cL_0$ of size $(q^3-q)/3$, consisting of lines disjoint to $\cY$;
    \item $\cL_1$ of size $(q^3-q)/2$, consisting of lines meeting $\cY$ in one point;
    \item $\cL_2$ of size $q^2+q$, consisting of lines having two points in common with $\cY$;
    \item $\cL_3$ of size $(q^3-q)/6$, consisting of lines intersecting $\cY$ in three point.
\end{itemize}
\end{prop}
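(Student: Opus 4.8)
The plan is to transport the point orbit decomposition of Proposition~\ref{points} through the Plücker correspondence and match it against the lines of $\cQ(4,q)$. First I would recall that points of $\cW(3,q)$ correspond under the Plücker map to lines of the parabolic quadric $\cQ(4,q)$, and that this correspondence is a bijection intertwining the action of $G_h$ on points of $\PG(3,q)$ with the action of $\tilde{G}_h$ on lines of $\cQ(4,q)$. Consequently the $\tilde{G}_h$-orbits on lines of $\cQ(4,q)$ are in natural bijection with the $G_h$-orbits on points of $\PG(3,q)$, and the orbit sizes are preserved. Thus there are exactly five orbits, of the stated sizes $q+1$, $(q^3-q)/3$, $(q^3-q)/2$, $q^2+q$, $(q^3-q)/6$, matching $\cA$, $\cO_0$, $\cO_1$, $\cO_2$, $\cO_3$ respectively.

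Next I would identify each orbit geometrically via its intersection pattern with the hypersurface $\cY$. A point $P$ of $\PG(3,q)$ lies on $j$ osculating planes precisely when the corresponding line $\ell_P$ of $\cQ(4,q)$ meets the configuration dual to the osculating planes in $j$ points; since the osculating planes are the planes $P_t^\s$ (together with $U_4^\s$), which as points of $\PG(3,q)$ correspond to the lines of $\cY\cap\cQ(4,q)$ — that is, to the $q+1$ pairwise disjoint lines $\langle(0,1,x,x,x^2,0),(1,0,x^{2^h},x^{2^h},0,x^{2^{h+1}})\rangle$ and $\langle(0,0,0,0,1,0),(0,0,0,0,0,1)\rangle$ listed above — incidence between a point and an osculating plane in $\PG(3,q)$ becomes incidence between the corresponding two lines of $\cQ(4,q)$, hence intersection of $\ell_P$ with the union $\cY$. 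Therefore: the lines of $\cY$ itself form the orbit $\cL_{q+1}$ corresponding to $\cA$ (the tangent lines to $\cA$ are generators, and under Plücker they go to the $q+1$ lines of $\cY\cap\cQ(4,q)$, the images of the osculating planes under $\s$); a line meeting $\cY$ in exactly $j$ points corresponds to a point of $\cO_j$ for $j\in\{0,1,3\}$; and the lines meeting $\cY$ in two points correspond to $\cO_2=\cQ^+(3,q)\setminus\cA$.

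To make this rigorous I would verify the dictionary "point $P$ on osculating plane $\gamma$ $\iff$ line of $\cQ(4,q)$ associated to $P$ meets line of $\cQ(4,q)$ associated to $\gamma$." This follows because under the Plücker embedding a point $P\in\cW(3,q)$ and a generator $g$ through $P$ map to a line and a point of that line on $\cQ(4,q)$; dualising via $\s$ and $\perp$, the osculating plane $\gamma=Q^\s$ with $Q\in\cA$ corresponds to the tangent line $\ell_Q$, whose Plücker image is a point of $\cX$, and the unique line of $\cY\cap\cQ(4,q)$ through that point is the image of the pencil of generators in $\gamma$; then $P\in\gamma$ iff $\langle P,Q\rangle$ lies in $\gamma$ and is a generator, which translates exactly to the two associated lines of $\cQ(4,q)$ meeting. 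Granting this, the count $|\cY\cap\cQ(4,q)|=(q+1)^2$ already computed confirms $\cL_{q+1}\cup\cL_2$ accounts for the quadric points on $\cY$, and the orbit sizes from Proposition~\ref{points} pin down the remaining three orbits uniquely by size.

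The main obstacle I anticipate is the bookkeeping of the two polarities $\s$ and $\perp$ together with the Plücker map: one must be careful that the duality sending osculating planes of $\cA$ to tangent lines of $\cA$ (via $\s$), composed with Plücker, composed with $\perp$ on $\PG(5,q)$, really does send "point lies on osculating plane" to "the two associated lines of $\cQ(4,q)$ intersect," and in particular that the $q+1$ lines of $\cY\cap\cQ(4,q)$ are exactly the Plücker images of the pencils of $\cW(3,q)$-generators lying in the osculating planes. Once that identification is checked — essentially a direct computation with the coordinates already displayed for $\cX$, $\cX'$, $\cY$ and the matrices $\tilde M_{a,b,c,d}$ — the result is immediate from Proposition~\ref{points}, since orbit membership is detected by the single invariant $|\ell\cap\cY|\in\{0,1,2,3,q+1\}$ together with the known orbit cardinalities.
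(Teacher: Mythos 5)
Your proposal is correct and follows essentially the same route as the paper: the paper likewise identifies lines of $\cQ(4,q)$ with points of $\PG(3,q)$ via the Pl\"ucker correspondence on pencils of generators of $\cW(3,q)$, notes that $\cY\cap\cQ(4,q)$ is the union of $q+1$ pairwise disjoint lines corresponding to the points of $\cA$ (equivalently, to the osculating planes under $\s$), and then reads off the five orbits and their sizes directly from Proposition~\ref{points}. The dictionary you propose to verify (incidence with an osculating plane translating to intersection with a line of $\cY\cap\cQ(4,q)$) is exactly the step the paper leaves implicit, and your justification of it is sound.
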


\begin{remark}
Note that a line of $\cL_1$ and $\cL_3$ has no point in common with $\cX \cup \cX'$, whereas each of the lines in $\cL_{q+1}$ and $\cL_2$ meets both $\cX$, $\cX'$ in one point. 
\end{remark}

The $G_h$-action on lines of $\PG(3, q)$ is equivalent to the $\tilde{G}_h$-action on points and reguli of $\cQ(4, q)$. Our goal is to describe these $\tilde{G}_h$-orbits. We first consider the $\tilde{G}_h$-orbits on points of $\cQ(4, q)$.

\begin{prop}\label{para}
Let $q \equiv \xi \pmod{3}$, $\xi \in \{\pm 1\}$. The group $\tilde{G}_h$ has $5+\xi$ orbits on points of $\cQ(4, q)$:
\begin{itemize}
\item $2$ orbits, $\cX$ and $\cX'$, of size $q+1$; 
\item one orbit, $\cY \cap \left(\cQ(4, q) \setminus (\cX \cup \cX')\right)$, of size $q^2-1$;
\item $2+\xi$ orbits on points of $\cQ(4, q) \setminus \cY$ of size $\frac{q^3-q}{2+\xi}$. 
\end{itemize}
\end{prop}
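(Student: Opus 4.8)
The plan is to transfer the problem to $\PG(3,q)$ via the Plücker correspondence: the $\tilde{G}_h$-action on points of $\cQ(4,q)$ is precisely the $G_h$-action on generators of $\cW(3,q)$, i.e. on tangent lines and isotropic lines of $\cA$. Since $\cQ(4,q)$ has $(q+1)(q^2+1)$ points, this equals the number of generators of $\cW(3,q)$, so we must distribute these generators among $G_h$-orbits. The conics $\cX$, $\cX'$ correspond to the regulus of tangent lines to $\cA$ and to its opposite regulus on $\cQ^+(3,q)$; $G_h$ acts transitively on each (it acts as $\PGL(2,q)$ on $\cA$, hence $3$-transitively, in particular transitively, on tangent lines), giving the two orbits of size $q+1$. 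First I would therefore set aside $\cX \cup \cX'$ and work with the remaining $(q+1)(q^2+1)-2(q+1) = q^3+q^2-q-1 = (q+1)(q^2-1)$ points, i.e. the generators of $\cW(3,q)$ that are neither tangents nor lines of the opposite regulus.

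Next I would identify what $\cY$ means in $\PG(3,q)$. A point of $\cQ(4,q)$ lying in $\cY$ corresponds, under Plücker, to a generator of $\cW(3,q)$ that is incident with the distinguished configuration cut out by $F_h$; concretely the $(q+1)^2$ points of $\cY\cap\cQ(4,q)$ are the generators meeting both a tangent line and a line of the opposite regulus, and the $q+1$ lines of $\cY$ inside $\cQ(4,q)$ correspond (via the Klein correspondence sending lines of $\cQ(4,q)$ to pencils of generators through a point of $\cW(3,q)$) to the $q+1$ points of $\cA$. The key structural fact to establish is: a generator $\ell$ of $\cW(3,q)$ that is not a tangent nor in the opposite regulus passes through $\ell^{\s}$-many… more precisely, by the self-duality, $\ell = \ell^{\s}$, and $\ell$ meets $\cA$ in at most one point, meets the tangent regulus in a controlled number of points, and the invariant separating orbits is exactly the number of points of $\cA$ lying on $\ell$ (equivalently the osculating-plane incidence), which by the analysis behind Proposition~\ref{points} is $0$ or links to whether a certain element is a cube. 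This is where the $\xi$ enters: a generator through a point $P_t \in \cA$ is essentially the tangent $\ell_{P_t}$ itself, so a non-tangent generator meets $\cA$ in $0$ points and lies on $0$ osculating planes; the splitting of the $(q+1)(q^2-1)$ remaining generators into the $q^2-1$ of $\cY\setminus(\cX\cup\cX')$ and the $2+\xi$ orbits of size $(q^3-q)/(2+\xi)$ on $\cQ(4,q)\setminus\cY$ must be read off from stabilizer orders.

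Concretely, for the orbit $\cY\cap(\cQ(4,q)\setminus(\cX\cup\cX'))$: pick the representative generator corresponding to a convenient point of $\cY$, say the line $\langle(0,1,x,x,x^2,0),(1,0,x^{2^h},x^{2^h},0,x^{2^{h+1}})\rangle$ for a fixed $x\in\F_q^*$; I would compute $Stab_{\tilde{G}_h}$ by solving $\tilde{M}_{a,b,c,d}$ fixing this point, reducing to equations in $a,b,c,d$ with $ad+bc=1$, and show the stabilizer has order $q(q-1)\cdot?$ so that the orbit has size $q^2-1$ — and crucially show it is a single orbit by checking transitivity of the induced $\PGL(2,q)$-action on $\cX\times\cX'$ minus the "diagonal" lines already in $\cY\cap\cQ(4,q)$. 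For $\cQ(4,q)\setminus\cY$: a generator $\ell$ here meets neither $\cX$ nor $\cX'$; pick a representative (an isotropic imaginary-type line not through $\cA$), compute its stabilizer, and find it has order $q+1$ times $\gcd$-data — exactly $(2+\xi)(q-1)\cdot$something — giving orbit size $(q^3-q)/(2+\xi)$; the number of such orbits, $2+\xi$, comes from the $\gcd(2^h+1,2^n-1)\in\{1,3\}$ dichotomy already exploited in the proof of Proposition~\ref{points}, i.e. whether a scalar parameter is or is not a cube, exactly as in the three cases ($2$, $3$, $6$) there.

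The main obstacle I expect is the bookkeeping on $\cQ(4,q)\setminus\cY$: one must show that the cube/non-cube partition of the parameter yields genuinely distinct $\tilde{G}_h$-orbits (not merged by some hidden symmetry) and that their common size is $(q^3-q)/(2+\xi)$, which requires a clean choice of representative line and an honest stabilizer computation inside $\tilde{G}_h$ rather than $G_h$ — the $6\times 6$ matrices $\tilde{M}_{a,b,c,d}$ make this heavier than the $4\times 4$ computation in Proposition~\ref{points}, though it is parallel. A counting check ($2(q+1)+(q^2-1)+(2+\xi)\cdot\frac{q^3-q}{2+\xi} = (q+1)(q^2+1)$, which indeed holds) can be used to confirm no orbit is missed once the orbit sizes are pinned down.
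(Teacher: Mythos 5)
Your high-level strategy (explicit representatives plus orbit--stabilizer, with a final counting check) is the same as the paper's, which takes $P=U_5+U_6$ as representative on $\cY\cap(\cQ(4,q)\setminus(\cX\cup\cX'))$, shows its stabilizer is the order-$q$ group $\{\tilde M_{1,0,c,1}\mid c\in\F_q\}$, and takes $P=U_1+U_5$ off $\cY$, whose stabilizer is the order-$(2+\xi)$ group $\{\tilde M_{d^{-1},0,0,d}\mid d^{2^h+1}=1\}$, with $U_1+\w U_5$ and $U_1+\w^2U_5$ representing the remaining two orbits when $\xi=1$. But your proposal has a genuine structural error and leaves the load-bearing steps undone. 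The error: you claim that ``a generator through a point $P_t\in\cA$ is essentially the tangent $\ell_{P_t}$ itself, so a non-tangent generator meets $\cA$ in $0$ points.'' This is false: through every point of $\PG(3,q)$, hence through every $P\in\cA$, there pass $q+1$ generators of $\cW(3,q)$ (the pencil of totally isotropic lines in $P^{\s}$). For $P\in\cA$ exactly one of these is the tangent (a point of $\cX$) and exactly one lies in the opposite regulus (a point of $\cX'$); the other $q-1$ are precisely the points of $\cY\cap(\cQ(4,q)\setminus(\cX\cup\cX'))$ on the corresponding line of $\cY$, which is how the count $(q+1)(q-1)=q^2-1$ arises. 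So the correct dichotomy is: generators meeting $\cA$ in one point are the $(q+1)^2$ points of $\cY\cap\cQ(4,q)$, and generators disjoint from $\cA$ are the $q^3-q$ points of $\cQ(4,q)\setminus\cY$. Your misreading would lead you to look for the $q^2-1$ orbit in the wrong place.

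Beyond that, the decisive computations are only gestured at, and the stabilizer orders you pencil in are wrong: for the orbit of size $q^2-1$ the stabilizer must have order $q$, not ``$q(q-1)\cdot{?}$'' (which would give an orbit of size $q+1$), and for the orbits off $\cY$ it must have order exactly $2+\xi$, not ``$q+1$ times $\gcd$-data.'' You also never establish transitivity on each claimed piece, nor that the three cube classes give genuinely distinct orbits when $q\equiv 1\pmod 3$; the clean way to see the latter is that for $g\in\tilde G_h$ one has $F_h(P^g)=F_h(P)$ and $F_h(\lambda P)=\lambda^3F_h(P)$, so the cube class of $F_h(P)$ in $\F_q^*/(\F_q^*)^3$ is an orbit invariant separating $U_1+\w^iU_5$, $i=0,1,2$ (this is exactly Lemma~\ref{technical} in the case $h=1$). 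As it stands the proposal is a plan with one false claim and the essential verifications missing, not yet a proof.
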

\begin{proof}
Let $P = U_5 + U_6$. Then $P \in \cY \cap \left(\cQ(4, q) \setminus (\cX \cup \cX')\right)$ and $Stab_{\tilde{G}_h}(P)$ is the group of order $q$ represented by the matrices $\tilde{M}_{1,0,c,1}$, with $c \in \F_q$. Hence $|P^{\tilde{G}_h}| = q^2-1 = |\cY \cap \left(\cQ(4, q) \setminus (\cX \cup \cX')\right)|$. 

Let $P = U_1 + U_5$. In this case $Stab_{\tilde{G}_h}(P)$ is the group of order $2+\xi$ represented by the matrices $\tilde{M}_{d^{-1}, 0, 0, d}$, with $d \in \F_q$ and $d^{2^h+1} = 1$. Hence the group $\tilde{G}_h$ has one or three orbits on points of $\cQ(4, q) \setminus \cY$ according as $\xi$ equals $-1$ or $1$, respectively. In the latter case the representatives of the orbits are $U_1 + U_5$, $U_1 + \w U_5$ and $U_1 + \w^2 U_5$, where $\w$ is a primitive element of $\F_q$.
\end{proof} 

In order to determine the $\tilde{G}_h$-orbits on reguli contained in $\cQ(4, q)$, the following results are needed.

\begin{lemma}\label{secarcs}
Let $s$ be a line of $\pi$ secant to $\cX$. The stabilizer of $s$ in $\tilde{G}_h$ has $(q+2)/2$ orbits on lines of $\pi$ secant to $\cX$:
\begin{itemize}
\item $\{s\}$;
\item $(q-2)/2$ orbits of size $q-1$; 
\item one orbit of size $2(q-1)$.
\end{itemize}
\end{lemma}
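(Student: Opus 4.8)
The plan is to work entirely inside the plane $\pi$, where $\tilde G_h$ induces a group isomorphic to $\PGL(2,q)$ acting in its natural $2$-transitive representation on the $q+1$ points of the conic $\cX$. A line $s$ of $\pi$ secant to $\cX$ meets $\cX$ in an unordered pair of points; conversely, since $\pi$ is a projective plane and $\cX$ is an irreducible conic, secant lines of $\pi$ correspond bijectively to $2$-subsets of $\cX$, and there are $\binom{q+1}{2}=q(q+1)/2$ of them. Thus the action of $\tilde G_h$ on secant lines of $\pi$ is equivalent to the action of $\PGL(2,q)$ on $2$-subsets of the projective line $\PG(1,q)$. After choosing a normal form, I would take $s$ to be the secant through the two points of $\cX$ corresponding to $0$ and $\infty$ in the natural $\PGL(2,q)$-model; its stabilizer $H=\mathrm{Stab}_{\tilde G_h}(s)$ is then the subgroup of $\PGL(2,q)$ stabilizing the $2$-set $\{0,\infty\}$, which is the dihedral group $D_{q-1}$ of order $2(q-1)$ generated by the maps $z\mapsto \lambda z$ ($\lambda\in\F_q^*$) and $z\mapsto 1/z$.

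The core computation is then the orbit decomposition of this $D_{q-1}$ on the set of $2$-subsets of $\PG(1,q)$. First, $\{s\}$ itself is fixed, giving the orbit $\{s\}$. For the remaining $q(q+1)/2 - 1$ secant lines, I would split according to how many of $\{0,\infty\}$ the new $2$-set contains. The $2$-sets containing exactly one of $0,\infty$ are $\{0,\mu\}$ and $\{\infty,\mu\}$ with $\mu\in\F_q^*$; the cyclic part $z\mapsto\lambda z$ acts freely and transitively on the $q-1$ pairs $\{0,\mu\}$ and likewise on $\{\infty,\mu\}$, and the inversion $z\mapsto 1/z$ swaps these two families, so together they form a single orbit of size $2(q-1)$. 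For the $2$-sets $\{\mu,\nu\}$ with $\mu,\nu\in\F_q^*$ distinct, the scaling action $z\mapsto\lambda z$ is semiregular (no nonidentity scaling fixes such a pair, since $\lambda\mu=\mu,\lambda\nu=\nu$ forces $\lambda=1$, while $\lambda\mu=\nu,\lambda\nu=\mu$ forces $\lambda^2=1$, i.e. $\lambda=1$ in even characteristic), so the cyclic group of order $q-1$ breaks these $\binom{q-1}{2}$ pairs into $(q-2)/2$ orbits of length $q-1$; and since the full stabilizer of such a pair in $D_{q-1}$ already has even order when one adds the reflection $z\mapsto \mu\nu/z$ (which fixes $\{\mu,\nu\}$), these $(q-2)/2$ cyclic orbits are already $D_{q-1}$-orbits — the reflection $z\mapsto1/z$ need not preserve individual $2$-sets, but one checks that its action on the $\binom{q-1}{2}$ pairs permutes the already-formed orbits to themselves because each such orbit is closed under $z\mapsto 1/z$ composed with a suitable scaling. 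This yields $(q-2)/2$ orbits of size $q-1$. Adding up: $1 + 2(q-1) + \tfrac{q-2}{2}(q-1) = \tfrac{q(q+1)}{2}$, confirming the count, and the orbit sizes match the statement with $(q+2)/2 = 1 + 1 + (q-2)/2$ orbits in total.

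The step I expect to be the main obstacle is the last claim above, namely verifying that the reflection $z\mapsto 1/z$ does not merge any two of the $(q-2)/2$ cyclic orbits among the pairs $\{\mu,\nu\}\subseteq\F_q^*$, and dually that it does not split the family $\{0,\mu\}\cup\{\infty,\mu\}$ into something other than one orbit. The clean way to handle this is to use the invariant: to the $2$-set $\{\mu,\nu\}$ (with neither $0$ nor $\infty$) associate the cross-ratio $(\,0,\infty;\mu,\nu\,)$ up to the action of the Klein four-group $\{t, 1/t, 1-t, \ldots\}$ relevant to permuting $\{0,\infty\}$ and the pair — but in fact the relevant stabilizer symmetry on the cross-ratio is only $t\mapsto 1/t$ (from swapping $0\leftrightarrow\infty$, since swapping $\mu\leftrightarrow\nu$ is irrelevant as $\{\mu,\nu\}$ is unordered and that also gives $t\mapsto 1/t$). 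So the orbit invariant is the quantity $\mu/\nu + \nu/\mu \in \F_q$ (which is symmetric and inversion-invariant), and one shows this is a complete invariant by a direct count: the map $\{\mu,\nu\}\mapsto \mu/\nu+\nu/\mu$ is $(q-1)$-to-one onto its image of size $(q-2)/2$ — here one must check that $x+1/x = y+1/y$ with $x\ne y$ in $\F_q^*$ forces $y = 1/x$, which follows since $x+1/x=y+1/y$ rearranges to $(x+y)(1+1/(xy))=0$, hence $x=y$ or $xy=1$. Feeding the explicit matrices $\tilde M_{a,b,c,d}$ of $\tilde G_h$ (restricted to $\pi$, where they act as $\PGL(2,q)$ on $\cX$) into this invariant then gives the orbit structure with no further case analysis. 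The bookkeeping with the explicit $6\times 6$ matrices is routine once the translation to the $\PGL(2,q)$-model on $\cX$ is made precise, so the only real content is the elementary cross-ratio/invariant argument just sketched.
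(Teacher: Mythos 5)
Your proposal is correct and follows essentially the same route as the paper: both arguments identify the stabilizer of $s$ as a dihedral group of order $2(q-1)$, split the remaining secants according to whether they meet $s$ on $\cX$ (giving the single orbit of size $2(q-1)$) or off $\cX$, and then show the stabilizer of a secant of the second kind has order exactly two. The only difference is cosmetic: you work in the $\PGL(2,q)$-on-$\PG(1,q)$ model and certify the $(q-2)/2$ orbits via the invariant $\mu/\nu+\nu/\mu$, whereas the paper identifies the order-two stabilizer directly as the involutory elation of $\pi$ with axis $s$ and center $s\cap s'$.
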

\begin{proof}
Since $\tilde{G}_h$ is transitive on the $q(q+1)/2$ lines of $\pi$ secant $\cX$ we may take $s$ as the line joining $U_1$ and $U_6$. Then $Stab_{\tilde{G}_h}(s)$ is the group of order $2(q-1)$ represented by $\left\langle \tilde{M}_{0,1,1,0}, \tilde{M}_{d^{-1}, 0,0, d} \right\rangle$, $d \in \F_q$. It is easily seen that $Stab_{\tilde{G}_h}(s)$ permutes in a single orbit the $2(q-1)$ lines of $\pi$ secant to $\cX$ and containing $U_1$ or $U_6$. Indeed each line containing $U_1$ (resp. $U_6$) is sent to a line containing $U_6$ (resp. $U_1$) by $\tilde{M}_{0,1,1,0}$, while it is sent to a different line  containing $U_1$ (resp. $U_6$) by $\tilde{M}_{d^{-1}, 0,0, d}$.
 
Every line $s'$ of $\pi$ secant to $\cX$ such that $s \cap s' \notin \cX$ belongs to an orbit of $Stab_{\tilde{G}_h}(s)$ of size $q-1$; indeed the unique non-trivial element of $Stab_{\tilde{G}_h}(s)$ fixing $s'$ is the involutory elation of $\pi$ having as axis the line $s$ and as center the point $s \cap s'$.
\end{proof}

In a similar fashion, one can prove the following results.

\begin{lemma}\label{extarcs}
Let $e$ be a line of $\pi$ external to $\cX$. The stabilizer of $e$ in $\tilde{G}_h$ has $q/2$ orbits on lines of $\pi$ external to $\cX$:
\begin{itemize}
\item $\{e\}$;
\item $(q-2)/2$ orbits of size $q+1$.
\end{itemize}
\end{lemma} 

\begin{theorem}\label{klein}
The group $\tilde{G}_h$ has $2(q+1)$ orbits on reguli of $\cQ(4, q)$:
\begin{itemize}
\item $2$ orbits of size $q(q-1)/2$;
\item $2$ orbits of size $q(q+1)/2$;
\item $2(q-2)$ orbits of size $(q^3-q)/2$;
\item $2$ orbits of size $q^3-q$.
\end{itemize}
\end{theorem}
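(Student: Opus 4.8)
The starting point is the observation already isolated before the statement: the $\tilde G_h$-action on reguli of $\cQ(4,q)$ is equivalent to the $G_h$-action on lines of $\PG(3,q)$ that are \emph{not} generators of $\cW(3,q)$, since a non-generator line $\ell$ and its polar $\ell^\s$ together produce one hyperbolic quadric inside $\cQ(4,q)$, i.e.\ one pair of opposite reguli. Thus a regulus is a pair $\{\ell,\ell^\s\}$, and the orbit count we want is the number of $G_h$-orbits on such \emph{unordered pairs}. Because $G_h$ commutes with $\s$, the map $\ell\mapsto\ell^\s$ is $G_h$-equivariant, so each $G_h$-orbit on non-generator lines either is stabilised setwise under $\s$ (the line orbit is \emph{self-polar}) or is swapped with a distinct orbit; in the first case it yields roughly one regulus orbit of the same size (or, if some line is fixed by $\s$, exactly one), and in the second case the two line-orbits merge into one regulus orbit. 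So the real content is: (i) classify the $\tilde G_h$-orbits on hyperbolic quadrics of $\cQ(4,q)$ directly, or (ii) classify $G_h$-orbits on non-generator lines and then track the $\s$-pairing. I would take route (i), working inside $\cQ(4,q)$, because the structure provided by the conics $\cX$, $\cX'$ and the hypersurface $\cY$ gives exactly the invariants needed.

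The key geometric device is this: a hyperbolic quadric $\cH\subset\cQ(4,q)$ is the intersection of $\cQ(4,q)$ with a $3$-space $\Sigma$ of $\Pi$ (a hyperplane section), and $\Sigma\cap\pi$, $\Sigma\cap\pi^\perp$ are lines (or all of $\pi$, $\pi^\perp$, but that case does not meet $\cQ(4,q)$ in a hyperbolic quadric in the relevant configuration) meeting $\cX$, $\cX'$ respectively in $0$, $1$ or $2$ points. Since $\tilde G_h$ acts on $\pi$ as $\PGL(2,q)$ on the conic $\cX$, a line of $\pi$ is secant or external to $\cX$, and likewise for $\pi^\perp$ and $\cX'$; moreover the two plane-sections are linked, because the restriction of $\tilde G_h$ to $\pi$ determines its restriction to $\pi^\perp$ (same parameters $a,b,c,d$). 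The plan is: (1) parametrise hyperbolic quadrics $\cH$ by the pair $(\Sigma\cap\pi,\ \Sigma\cap\pi^\perp)$; (2) use Lemma~\ref{secarcs} and Lemma~\ref{extarcs} to enumerate, for a fixed reference line of $\pi$ (secant, resp.\ external), the orbits of its stabiliser on lines of $\pi^\perp$, which pins down the orbits on pairs where the $\pi$-side is fixed; (3) assemble the global orbit count by the orbit-counting (Burnside) bookkeeping, i.e.\ summing $|\mathrm{Stab}$-orbits on the other side$|$ over representatives of the $\tilde G_h$-orbits on the first side, being careful to include the two degenerate possibilities ($\Sigma$ contains $\pi$, or $\Sigma$ contains $\pi^\perp$) which correspond to the lines of $\cQ(4,q)$ passing through a point of $\cX$ or $\cX'$; (4) match the resulting orbit sizes against the claimed list $2\times q(q-1)/2$, $2\times q(q+1)/2$, $2(q-2)\times (q^3-q)/2$, $2\times(q^3-q)$, and verify the sizes sum to the total number of reguli of $\cQ(4,q)$, namely $\tfrac12|\cQ(4,q)|\cdot(\text{number of hyperbolic sections through a point})$, as an arithmetic check. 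The four "flavours" are visibly: both plane-sections secant and paired a certain way ($\to$ size $q(q-1)/2$ and $q(q+1)/2$ orbits, the imaginary/real chord analogues), mixed secant/external ($\to$ the generic size-$(q^3-q)/2$ orbits, $2(q-2)$ of them, indexed by the $(q-2)/2$ secant-stabiliser orbits of Lemma~\ref{secarcs} together with a factor $2$ from the $\cX\leftrightarrow\cX'$ asymmetry), and both external ($\to$ the size-$(q^3-q)$ orbits).

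The main obstacle I expect is \textbf{step (2)--(3): controlling the coupling between the $\pi$-section and the $\pi^\perp$-section}. Fixing a line $s$ of $\pi$ does not fix a line of $\pi^\perp$, but it does restrict $\tilde G_h$ to $\mathrm{Stab}_{\tilde G_h}(s)$, a dihedral-type group of order $2(q-1)$ (secant case) or cyclic of order $q+1$ (external case) as computed in the lemmas; I must then determine how this subgroup acts on the lines of $\pi^\perp$ and, crucially, on which lines of $\pi^\perp$ actually arise as $\Sigma\cap\pi^\perp$ for a $3$-space $\Sigma$ with $\Sigma\cap\pi=s$ and $\Sigma\cap\cQ(4,q)$ hyperbolic. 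This is where one has to be genuinely careful: not every pair (line of $\pi$, line of $\pi^\perp$) is realised, because $\Sigma$ is a $3$-space in the $4$-space $\Pi$, so the two lines cannot be arbitrary — they span at most a $3$-space, which is a genuine constraint, and additionally $\Sigma\cap\cQ(4,q)$ must be non-degenerate. The bookkeeping will require separating the cases by the type (secant/external, and relative position to $\cY$) of both sections and checking that the stabiliser computations of Lemmas~\ref{secarcs} and~\ref{extarcs}, which were done \emph{inside a single plane}, transfer correctly to the constrained family of $3$-spaces; the elations appearing in those lemmas act on $\pi^\perp$ too, and their fixed configurations there are exactly what produces the orbit splitting. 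Once that coupling is nailed down, the final tally of $2(q+1)$ orbits with the stated sizes follows by a direct (if slightly lengthy) count, and the arithmetic identity $2\cdot\frac{q(q-1)}2+2\cdot\frac{q(q+1)}2+2(q-2)\cdot\frac{q^3-q}2+2(q^3-q) = $ (total number of reguli of $\cQ(4,q)$) serves as the consistency check closing the proof.
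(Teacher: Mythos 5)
Your overall strategy is the paper's: parametrise the hyperbolic quadrics $\cQ^+ = \Sigma \cap \cQ(4,q)$ by the pair of lines $(\Sigma\cap\pi,\ \Sigma\cap\pi^\perp)$, reduce to the stabiliser orbits of Lemmas~\ref{secarcs} and~\ref{extarcs}, and double because each quadric carries two reguli. However, your case analysis contains a concrete error that would derail the tally. The ``mixed secant/external'' flavour you invoke to produce the $2(q-2)$ orbits of size $(q^3-q)/2$ does not exist among hyperbolic sections: since $\ell=\Sigma\cap\pi$, $\ell'=\Sigma\cap\pi^\perp$ and $\Sigma^\perp$ are pairwise orthogonal, a hyperbolic $\Sigma\cap\cQ(4,q)$ forces $\ell$ and $\ell'$ to be \emph{both} secant or \emph{both} external to $\cX$, $\cX'$ respectively (a mixed pair spans a solid meeting $\cQ(4,q)$ in an elliptic quadric, which contains no regulus; the counts $q^2(q+1)^2/4+q^2(q-1)^2/4=q^2(q^2+1)/2$ versus $q^2(q^2-1)/2$ confirm this). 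Consequently the orbits are distributed differently from what you predict: the secant--secant case, governed by the $(q+2)/2$ orbits of Lemma~\ref{secarcs} of sizes $1$, $q-1$ (taken $(q-2)/2$ times) and $2(q-1)$, multiplied by the $q(q+1)/2$ choices of $\ell'$ and doubled, yields $2$ orbits of size $q(q+1)/2$, $q-2$ orbits of size $(q^3-q)/2$, and the $2$ orbits of size $q^3-q$; the external--external case, via Lemma~\ref{extarcs}, yields $2$ orbits of size $q(q-1)/2$ and the remaining $q-2$ orbits of size $(q^3-q)/2$. So the two largest orbits come from secant sections meeting on $\cX$, not from the external case, and the middle block splits evenly between the two cases.

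Two further points. The coupling you flag as the main obstacle is resolved not by analysing how $\mathrm{Stab}_{\tilde{G}_h}(s)$ acts on $\pi^\perp$ for arbitrary targets, but by the observation that $\tilde{G}_h$ acts with the same parameters on $\pi$ and $\pi^\perp$, so the stabiliser of a secant (resp.\ external) line $\ell'$ of $\pi^\perp$ \emph{is} the stabiliser of a corresponding secant (resp.\ external) line of $\pi$; once $\ell'$ is fixed by transitivity, the admissible $\ell$ range over all secant (resp.\ external) lines of $\pi$, and the orbits on pairs are literally the orbits of Lemma~\ref{secarcs} (resp.\ \ref{extarcs}). Finally, the factor $2$ is not an ``$\cX\leftrightarrow\cX'$ asymmetry'': it comes from checking that every element of $\mathrm{Stab}_{\tilde{G}_h}(\ell\cup\ell')$ preserves each of the two reguli of $\cQ^+$ individually, so the two reguli of one quadric always lie in distinct $\tilde{G}_h$-orbits. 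This verification is a needed step, not a formality, and your plan omits it.
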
 
\begin{proof}
Let $\Sigma$ be a solid of $\Pi$ such that $\Sigma \cap \cQ(4, q)$ is a three-dimensional hyperbolic quadric $\cQ^+$. Then $\Sigma$ meets $\pi$ and $\pi^\perp$ in a line. Let $\Sigma \cap \pi$ and $\Sigma \cap \pi^\perp$ be denoted by $\ell$ and $\ell'$, respectively, and $Stab_{\tilde{G}_h}(\Sigma) = Stab_{\tilde{G}_h}(\ell \cup \ell') = Stab_{\tilde{G}_h}(\ell) \cap Stab_{\tilde{G}_h}(\ell')$. The three lines $\ell$, $\ell'$ and $\Sigma^\perp$ are pairwise orthogonal. It follows that the lines $\ell$, $\ell'$ are either both secant or both external to $\cQ(4, q)$. 

Assume that the former case occurs. Since $\tilde{G}_h$ is transitive on the $q(q+1)/2$ lines of $\pi^\perp$ secant $\cX'$, we may fix $\ell'$. Hence $Stab_{\tilde{G}_h}(\ell')$ is a group of order $2(q-1)$ fixing a line of $\pi$ secant to $\cX$. It follows that $\ell$ has to lie in one of the $(q+2)/2$ orbits of Lemma~\ref{secarcs}. Since every member of $Stab_{\tilde{G}_h}(\ell \cup \ell')$ stabilizes the reguli of $\cQ^+$, the number of $\tilde{G}_h$-orbits on reguli of $\cQ(4, q)$ obtained in this way is twice the number of orbits of Lemma~\ref{secarcs}. Moreover their size is $q(q+1)/2$ times the size of the orbits of Lemma~\ref{secarcs}.

Similarly, if the latter case occurs, we may fix $\ell'$ to be our favorite line of $\pi^\perp$ external to $\cX'$. In this case $Stab_{\tilde{G}_h}(\ell')$ is a group of order $2(q+1)$ fixing a line of $\pi$ external to $\cX$ and $\ell$ has to be in one of the $q/2$ orbits of Lemma~\ref{extarcs}. Again it can be checked that every member of $Stab_{\tilde{G}_h}(\ell \cup \ell')$ fixes the reguli of $\cQ^+$. Therefore the number of $\tilde{G}_h$-orbits on reguli of $\cQ(4, q)$ obtained in this way is twice the number of orbits of Lemma~\ref{extarcs} and their size is $q(q-1)/2$ times the size of the orbits of Lemma~\ref{extarcs}.
\end{proof}

With the same notation used in the proof of Theorem~\ref{klein}, denote by $\cS$ one of the $2(q+1)$ orbits on reguli of $\cQ(4, q)$ and by $\Sigma$ a solid of $\Pi$ such that 
\begin{itemize}
\item[{\em i)}] $\Sigma \cap \cQ(4, q)$ is a three-dimensional hyperbolic quadric $\cQ^+$;
\item[{\em ii)}] exactly one of the two reguli $\cR_1$, $\cR_2$ of $\cQ^+$ belongs to $\cS$. 
\end{itemize}
The next results, which can be deduced from the proof of Theorem~\ref{klein}, provides a solid $\Sigma$ such that one of the two reguli $\cR_1$, $\cR_2$ of $\cQ^+ = \Sigma \cap \cQ(4, q)$ is a representative for the orbit $\cS$.

\begin{cor}\label{representatives}
Let $\zeta$ be a fixed element in $\F_{q^2} \setminus \F_q$ such that $(\z+1)^{q+1} = 1$.
\begin{itemize}
\item $|\cS| = q(q+1)/2$, $\Sigma: X_3 = 0$, $\cQ^+: X_1 X_6 + X_2 X_5 = 0$.
\item $|\cS| = q^3-q$, $\Sigma: X_1 = X_3$, $\cQ^+: X_1^2 + X_1 X_6 + X_2 X_5 = 0$.
\item $|\cS| = (q^3-q)/2$ and $|\Sigma \cap \cX| = 2$, $\Sigma: X_1 + \frac{z^4}{z^2+1} X_3 + X_6 = 0$ for some $z \in \F_q \setminus \{0,1\}$, $\cQ^+: X_1^2 + \frac{z^4}{z^2+1} X_1 X_3 + X_3^2 + X_2X_5 = 0$.
\item $|\cS| = q(q-1)/2$, $\Sigma: X_1 + \z^{q+1} X_2 + \z^{2(q+1)} X_3 + \z^{q+1} X_5 + X_6 = 0$, \\
$\cQ^+: X_1^2 + \z^{q+1} X_1X_2 + \z^{2(q+1)} X_1X_3 + \z^{q+1} X_1 X_5 + X_2 X_5 + X_3^2 = 0$.
\item $|\cS| = (q^3-q)/2$ and $|\Sigma \cap \cX| = 0$, $\Sigma: X_1 + \frac{t^{q+1}}{\z^{q+1}} X_2 + t^{q+1} X_3 + \frac{t^{q+1}}{\z^{q+1}} X_5 + X_6 = 0$, for some $t \in \F_{q^2} \setminus \left\{0, \z^2, \z^{2q}\right\}$, $(t+1)^{q+1} = 1$, $\cQ^+: X_1^2 + \frac{t^{q+1}}{\z^{q+1}} X_1X_2 + t^{q+1} X_1X_3 + \frac{t^{q+1}}{\z^{q+1}} X_1 X_5 + X_2 X_5 + X_3^2 = 0$.
\end{itemize}
\end{cor}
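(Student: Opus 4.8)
The plan is to produce each reguli-orbit $\cS$ from an explicit solid $\Sigma=\langle\ell,\ell'\rangle$, exactly as in the proof of Theorem~\ref{klein}. Recall that on $\Pi$ (the hyperplane $X_3=X_4$) the parabolic quadric is $\cQ(4,q): X_1X_6+X_2X_5+X_3^2=0$, that $\pi=\langle U_1,U_3+U_4,U_6\rangle$ and $\pi^\perp=\langle U_2,U_3+U_4,U_5\rangle$ meet exactly in the common nucleus $\Pi^\perp=(0,0,1,1,0,0)$ of $\cX$ and $\cX'$, and that $\tilde{G}_h$ stabilizes $\pi$ and $\pi^\perp$. As observed inside the proof of Theorem~\ref{klein}, for $\Sigma=\langle\ell,\ell'\rangle$ with $\ell=\Sigma\cap\pi$ and $\ell'=\Sigma\cap\pi^\perp$ both secant (or both external) to $\cX,\cX'$, one has $\Sigma\cap\pi^\perp=\ell'$, so two such solids are $\tilde{G}_h$-equivalent if and only if their lines $\ell$ are equivalent under $Stab_{\tilde{G}_h}(\ell')$; hence the reguli-orbits correspond to twice the orbits of Lemmas~\ref{secarcs} and~\ref{extarcs}. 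For each $\cS$ I fix $\ell'$, take $\ell$ to be a representative of the appropriate orbit, form $\Sigma=\langle\ell,\ell'\rangle$, read its linear equation off from spanning points, and substitute $X_4=X_3$ and that equation into $X_1X_6+X_2X_5+X_3X_4$ to obtain $\cQ^+=\Sigma\cap\cQ(4,q)$.

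In the secant case I fix $\ell'=\langle U_2,U_5\rangle$, so $Stab_{\tilde{G}_h}(\ell')$ is the group of order $2(q-1)$ of Lemma~\ref{secarcs} stabilizing $s=\langle U_1,U_6\rangle$ and acting on $\cX\cong\PG(1,q)$ as $\langle t\mapsto\lambda t,\ t\mapsto t^{-1}\rangle$. Taking $\ell=s$ gives $\Sigma: X_3=0$, $\cQ^+: X_1X_6+X_2X_5=0$ (orbit size $q(q+1)/2$); taking $\ell=\langle U_6,(1,0,1,1,0,0)\rangle$, a secant through one of the two points of $s\cap\cX$ (the $2(q-1)$-orbit of Lemma~\ref{secarcs}), gives $\Sigma: X_1=X_3$, $\cQ^+: X_1^2+X_1X_6+X_2X_5=0$ (orbit size $q^3-q$); taking $\ell$ the secant through $\bar{P}_u$ and $\bar{P}_{u^{-1}}$ (none of the special points), a direct elimination gives $\Sigma: X_1+(u+u^{-1})X_3+X_6=0$. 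Since $\{u+u^{-1}:u\in\F_q\setminus\{0,1\}\}=\{z^4/(z^2+1):z\in\F_q\setminus\{0,1\}\}$ — both sets have size $(q-2)/2$, and for every $z$ the equation $w^2+\frac{z^4}{z^2+1}w+1=0$ is solvable because $\Tr_{q|2}(\frac{z^2+1}{z^4})=\Tr_{q|2}(z^{-2})+\Tr_{q|2}(z^{-4})=0$ — this rewrites as $\Sigma: X_1+\frac{z^4}{z^2+1}X_3+X_6=0$ with $\cQ^+: X_1^2+\frac{z^4}{z^2+1}X_1X_3+X_3^2+X_2X_5=0$; as $z$ runs over $\F_q\setminus\{0,1\}$ (two-to-one onto $(q-2)/2$ solids) these realize all $q-2$ reguli-orbits of size $(q^3-q)/2$ meeting $\cX$ in two points.

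In the external case I fix $\z\in\F_{q^2}\setminus\F_q$ with $(\z+1)^{q+1}=1$; then $\z^{q+1}=\z+\z^q=:\tau$, so $\z$ is a root of the irreducible polynomial $T^2+\tau T+\tau$ and hence $\Tr_{q|2}(\tau^{-1})=1$. The line $\ell'=\{(0,b,c,c,b+\tau c,0)\}\subset\pi^\perp$ is then external to $\cX'$ (the relevant equation $s^2+\tau s+1=0$ is irreducible, again by $\Tr_{q|2}(\tau^{-1})=1$), and together with the corresponding fixed external line of $\pi$ it gives $\Sigma: X_1+\z^{q+1}X_2+\z^{2(q+1)}X_3+\z^{q+1}X_5+X_6=0$ and the stated $\cQ^+$ (orbit size $q(q-1)/2$). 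For the remaining orbits I take $\ell\subset\pi$ external to $\cX$, through $\bar{P}_\omega$ and $\bar{P}_{\omega^q}$ with $\omega=t+1$ and $(t+1)^{q+1}=1$ (whence $\omega^{q+1}=1$, which forces $\ell$ external by the same trace computation), obtaining $\Sigma: X_1+\frac{t^{q+1}}{\z^{q+1}}X_2+t^{q+1}X_3+\frac{t^{q+1}}{\z^{q+1}}X_5+X_6=0$ and the stated $\cQ^+$; discarding $t=0$ (which makes $\ell$ tangent) and $t=\z^2,\z^{2q}$ (which give back the previous solid) leaves $q-2$ admissible values of $t$, two-to-one onto $(q-2)/2$ solids, hence onto the $q-2$ reguli-orbits of size $(q^3-q)/2$ disjoint from $\cX$. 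Finally every $\cQ^+$ is hyperbolic: writing $\Sigma$ as the orthogonal sum $\ell\perp\ell'$, the form on $\Sigma$ is an orthogonal sum of two hyperbolic planes (secant case) or of two copies of the anisotropic binary form over $\F_q$ (external case); the latter has $(q+1)^2$ isotropic points and hence is again of $+$ type, while in the concrete cases one can also check it directly, e.g. $X_1^2+cX_1X_3+X_3^2$ with $c=z^4/(z^2+1)$ is hyperbolic since $\Tr_{q|2}(c^{-1})=0$. The orbit sizes are then those given by Theorem~\ref{klein}.

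The step I expect to be the main obstacle is, in each parametrized family, the verification that the chosen $\ell$'s really do hit a full transversal of the $(q-2)/2$ non-trivial orbits of Lemmas~\ref{secarcs}, \ref{extarcs}. For the secant family this reduces to the fact that the orbit of the secant $\{\bar{P}_u,\bar{P}_{u^{-1}}\}$ depends only on the ratio class $\{u^2,u^{-2}\}$ and that $u\mapsto u^2$ is a bijection of $\F_q^*$. For the external family one must first identify $Stab_{\tilde{G}_h}(\ell')|_\pi$ with the normalizer of the non-split torus $C$ fixing the imaginary pair $\{\bar{P}_{\nu_0},\bar{P}_{\nu_0^q}\}$ attached to the fixed external line (here $\nu_0=(\z+1)^2$), note that $C$ already produces those orbits, and then compute that the Möbius map $f: \omega\mapsto\frac{\omega+\nu_0}{\omega\nu_0+1}$ restricts to a bijection $\{\omega:\omega^{q+1}=1\}\to\PG(1,q)$, so that the $C$-orbit of $\ell$ is governed by $\{f(\omega),f(\omega)^{-1}\}$ and the $(q-2)/2$ lines $\ell$ lie in distinct orbits. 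All remaining steps — the eliminations yielding the equations of $\Sigma$ and $\cQ^+$, the irreducibility and trace criteria, and the two-to-one counts — are routine.
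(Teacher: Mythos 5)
Your proposal is correct and follows exactly the route the paper intends: the paper gives no explicit proof of Corollary~\ref{representatives} but states that it ``can be deduced from the proof of Theorem~\ref{klein}'', and you carry out precisely that deduction --- fixing $\ell'$, choosing representatives $\ell$ of the orbits of Lemmas~\ref{secarcs} and~\ref{extarcs}, and computing $\Sigma=\langle\ell,\ell'\rangle$ and $\cQ^+$. I checked the details you flag as delicate (the identity $\{u+u^{-1}\}=\{z^4/(z^2+1)\}$, the trace criteria forcing the external cases, and the transversality of the families $\{u,u^{-1}\}$ and $\{\omega,\omega^{-1}\}$ via the invariant $\{f(\omega),f(\omega)^{-1}\}$, which is the square root of the cross-ratio with the fixed pair) and they all hold.
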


The results of Proposition~\ref{para} and Theorem~\ref{klein} provide a proof of \cite[Conjecture 8.2]{DMP} in the even characteristic case.

\begin{theorem}
Let $q$ be even, with $q \equiv \xi \pmod{3}$, $\xi \in \{\pm 1\}$. The group $G_h$ has $2q+7+\xi$ orbits on lines of $\PG(3, q)$:
\begin{itemize}
\item $2$ orbits of size $q+1$; 
\item $2$ orbits of size $q(q-1)/2$;
\item $2$ orbits of size $q(q+1)/2$;
\item one orbit of size $q^2-1$;
\item $2+\xi$ orbits of size $\frac{q^3-q}{2+\xi}$;
\item $2(q-2)$ orbits of size $(q^3-q)/2$;
\item $2$ orbits of size $q^3-q$.
\end{itemize}
\end{theorem}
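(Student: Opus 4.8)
The plan is to deduce the statement by combining the Pl\"ucker correspondence with the two structural results already obtained in this section, Proposition~\ref{para} and Theorem~\ref{klein}.

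First I would recall the dictionary set up above. Under the Pl\"ucker map the lines of $\PG(3,q)$ are in bijection with the points of the Klein quadric $\cQ^+(5,q)$, this bijection is equivariant for the action of $G_h$ (realised on $\cQ^+(5,q)$ as $\tilde{G}_h$), and the generators of $\cW(3,q)$ correspond exactly to the points of $\cQ(4,q) = \cQ^+(5,q) \cap \Pi$. A line $\ell$ of $\PG(3,q)$ that is not a generator of $\cW(3,q)$, together with its polar line $\ell^\s$, is mapped onto the two reguli of a three-dimensional hyperbolic quadric $\cQ^+(3,q) \subset \cQ(4,q)$; assigning to $\ell$ the regulus it determines, one obtains a $G_h$-equivariant bijection between the non-generator lines of $\PG(3,q)$ and the reguli contained in $\cQ(4,q)$. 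Consequently the set of lines of $\PG(3,q)$ splits $G_h$-invariantly as (generators) $\sqcup$ (non-generators), and the $G_h$-orbits on lines of $\PG(3,q)$ are precisely the $\tilde{G}_h$-orbits on points of $\cQ(4,q)$ together with the $\tilde{G}_h$-orbits on reguli of $\cQ(4,q)$.

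It then remains to read off the two lists. By Proposition~\ref{para} the orbits on points of $\cQ(4,q)$ are $\cX$ and $\cX'$, both of size $q+1$, the orbit $\cY \cap (\cQ(4,q) \setminus (\cX \cup \cX'))$ of size $q^2-1$, and $2+\xi$ orbits of size $(q^3-q)/(2+\xi)$, hence $5+\xi$ orbits; by Theorem~\ref{klein} the orbits on reguli of $\cQ(4,q)$ are two of size $q(q-1)/2$, two of size $q(q+1)/2$, $2(q-2)$ of size $(q^3-q)/2$ and two of size $q^3-q$, hence $2(q+1)$ orbits. Adding, $G_h$ has $(5+\xi) + 2(q+1) = 2q+7+\xi$ orbits on lines of $\PG(3,q)$ with the stated sizes; a quick arithmetic check that the sizes sum to $(q^2+1)(q^2+q+1)$ confirms that no orbit has been missed.

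The one point requiring care, rather than being pure bookkeeping, is the equivariant identification of non-generator lines with reguli of $\cQ(4,q)$: one must ensure that the \emph{two} reguli of a fixed $\cQ^+(3,q) \subset \cQ(4,q)$ come from the \emph{pair} $\{\ell, \ell^\s\}$ and not from a single line, so that no two $\tilde{G}_h$-orbits on reguli get fused into one $G_h$-orbit on lines. This is exactly why the reguli-orbits in Theorem~\ref{klein} occur in pairs: in the notation of its proof, $Stab_{\tilde{G}_h}(\ell \cup \ell')$ stabilises each of the two reguli of $\cQ^+ = \Sigma \cap \cQ(4,q)$ separately, so the polarity $\s$ interchanges the two reguli of a given $\cQ^+(3,q)$ without merging the corresponding $\tilde{G}_h$-orbits, matching the ``$2\times$'' structure in the statement of Theorem~\ref{klein}. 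Granting this, the theorem follows.
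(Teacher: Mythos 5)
Your proposal is correct and is essentially the paper's own argument: the authors state this theorem as an immediate consequence of Proposition~\ref{para} and Theorem~\ref{klein} via the Pl\"ucker dictionary (generators of $\cW(3,q)$ $\leftrightarrow$ points of $\cQ(4,q)$, non-generator lines $\leftrightarrow$ reguli contained in $\cQ(4,q)$), exactly as you do, with the same count $(5+\xi)+2(q+1)=2q+7+\xi$. Your additional remark on the equivariant bijection between non-generator lines and reguli (so that the pair $\{\ell,\ell^{\s}\}$ accounts for the two reguli of a fixed $\cQ^+$) is a careful justification of a step the paper leaves implicit.
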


\subsection{Point line distribution}

In this section we focus on the case when $\cA$ is a twisted cubic (i.e. $h = 1$) and we address the point line distribution with respect to $G_1$-orbits on points and lines of $\PG(3, q)$. Dually, this problem is equivalent to determine 
\begin{itemize}
\item the number of lines of $\cL_i$ through a point of $\cQ(4, q)$ of a given $\tilde{G}_1$-orbit on points of $\cQ(4, q)$;
\item the number of lines of $\cL_i$ in a regulus of a given $\tilde{G}_1$-orbit on reguli of $\cQ(4, q)$.  
\end{itemize}
As for $\tilde{G}_1$-orbits on points of $\cQ(4, q)$, recall that besides the three three orbits $\cX$, $\cX'$ and $\cY \setminus (\cX \cup \cX')$, there is one or three more orbits, according as $q \equiv -1$ or $1 \pmod{3}$. Let us call $\cZ_i$, $i = 1, \dots, 2 + \xi$, these $\tilde{G}_1$-orbits. If $q \equiv 1 \pmod{3}$, from the proof of Proposition~\ref{para}, it follows that $\cZ_i = R_i^{\tilde{G}_1}$, where $R_{i} = U_1 + \w^{i-1} U_5$, $i = 1,2,3$. In particular $|\cZ_i| = (q^3-q)/3$.  

\subsubsection{The number of lines of $\cL_i$ through a point of $\cQ(4, q)$}

In order to determine the number of lines of $\cL_i$ through a point of $\cP$, where $\cP$ is a $\tilde{G}_1$-orbits of points of $\cQ(4, q)$, we calculate the number $k$ of points of $\cP$ on a line of $\cL_i$ and then use the formula 
\[
\frac{|\cL_i| k}{|\cP|}
\] 
to derive the required result.

Let $r$ be a line of $\cQ(4, q)$. If $r \in \cL_{q+1}$, then it has one point in common with both $\cX$ and $\cX'$, $q-1$ points of $\cY \setminus (\cX \cup \cX')$ and no point of $\cZ_i$. If $r \in \cL_2$, it has one point in common with both $\cX$ and $\cX'$, $(q-1)/(2+\xi)$ points of $\cZ_i$ and no point of $\cY \setminus (\cX \cup \cX')$. 

Assume $q \equiv -1 \pmod 3$. If $r \in \cL_0$, then all its points belong to $\cZ_1$, while if $r \in \cL_1$, then $|r \cap (\cY \setminus (\cX \cup \cX'))| = 1$ and $|r \cap \cZ_1| = q$. If $r \in \cL_3$, then $|r \cap (\cY \setminus (\cX \cup \cX')) = 3$ and $|r \cap \cZ_1| = q-2$.

In the case when $q \equiv 1 \pmod 3$, a more refined argument is needed.

\begin{lemma}\label{technical}
Let $q \equiv 1 \pmod{3}$. A point $P$ belongs to $\cZ_i$ if and only if $F_1(P) = \w^{3j+i-1}$. 
\end{lemma}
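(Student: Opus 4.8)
The plan is to understand what the function $F_1$ does on the orbits $\cZ_i$ and then show that the value $F_1(P)$, read modulo cubes, is a complete invariant separating the three orbits. First I would recall from the proof of Proposition~\ref{para} that when $q \equiv 1 \pmod 3$ the orbits $\cZ_i$ are $R_i^{\tilde{G}_1}$ with $R_i = U_1 + \w^{i-1} U_5$, and that $\tilde{G}_1$ fixes $\cY$, the zero set of $F_1$; since $\cY \cap (\cQ(4,q)\setminus(\cX\cup\cX'))$ is itself an orbit, the points of $\cZ_i$ lie off $\cY$, so $F_1(P) \ne 0$ on each $\cZ_i$. The key structural fact I need is that although $F_1$ is not $\tilde{G}_1$-invariant as a function, it is a \emph{relative invariant}: for $g = \tilde{M}_{a,b,c,d} \in \tilde{G}_1$ one has $F_1(P g) = \lambda(g)^{?}\, F_1(P)$ up to the projective scaling of the coordinates of $Pg$, where $\lambda(g)$ is some power of $ad+bc=1$ times a cube. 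Concretely, $F_1(X_1,\dots,X_6) = X_1 X_5^2 + X_2^2 X_6$ is a cubic form (degree $1+2$ in the first monomial, $2+1$ in the second), so under a linear substitution its value changes by the relevant factors coming from the weights of the coordinates; the point is to check that this factor is always a nonzero cube in $\F_q^*$. Because $q \equiv 1 \pmod 3$, the cubes form an index-$3$ subgroup of $\F_q^*$, so $F_1(P) \bmod (\F_q^*)^3$ is a well-defined $\tilde{G}_1$-invariant of the point $P$ (the ambiguity in the homogeneous coordinates of $P$ is a scalar $\mu$, and $F_1(\mu P) = \mu^3 F_1(P)$, again only changing things by a cube).

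With that in hand, the argument is short. Evaluate $F_1$ at the representative $R_i = U_1 + \w^{i-1} U_5$: here $X_1 = 1$, $X_5 = \w^{i-1}$, all other coordinates $0$, so $F_1(R_i) = \w^{2(i-1)}$. Wait — I should double-check the normalization the authors want; the claimed answer is $F_1(P) = \w^{3j+i-1}$, i.e.\ $F_1(P) \in \w^{i-1}(\F_q^*)^3$. So I would instead observe that the authors are free to rescale the representative: replacing $R_i$ by $\w^{i-1} R_i$ (the same projective point) gives $F_1 = \w^{3(i-1)}$; more usefully, one checks that along the orbit $\cZ_i$ the value of $F_1$ sweeps out exactly one coset of the cubes, and one identifies which coset by a single evaluation. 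Since $\w^{2(i-1)} \equiv \w^{-(i-1)} \pmod{(\F_q^*)^3}$ is not matching $\w^{i-1}$ in general, the cleanest route is: pick the coordinate normalization for points of $\cQ(4,q)\setminus\cY$ used implicitly elsewhere in the paper (e.g.\ $X_1 = 1$ on the affine chart, or some fixed Plücker normalization), evaluate $F_1$ there, and record that $R_i$ gives a representative of the coset $\w^{i-1}(\F_q^*)^3$. Then the three cosets $(\F_q^*)^3$, $\w(\F_q^*)^3$, $\w^2(\F_q^*)^3$ are pairwise distinct, so the three orbits $\cZ_1, \cZ_2, \cZ_3$ are exactly the three fibres of the map $P \mapsto F_1(P) \bmod (\F_q^*)^3$, which is precisely the statement: $P \in \cZ_i \iff F_1(P) = \w^{3j+i-1}$ for some $j$.

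The steps in order: (1) show $F_1(P) \ne 0$ for $P \in \cQ(4,q)\setminus\cY$ and in particular for $P \in \bigcup_i \cZ_i$; (2) prove the relative-invariance identity $F_1(Pg) \equiv F_1(P) \pmod{(\F_q^*)^3}$ for all $g \in \tilde{G}_1$, by verifying it on generators of $\tilde{G}_1$ (it suffices to check $\tilde{M}_{0,1,1,0}$, the diagonal torus $\tilde{M}_{d^{-1},0,0,d}$, and one unipotent $\tilde{M}_{1,b,0,1}$, since these generate $\PGL(2,q)$), each being a routine substitution into $X_1 X_5^2 + X_2^2 X_6$; (3) note that $P\mapsto F_1(P)\bmod(\F_q^*)^3$ is then constant on each $\tilde{G}_1$-orbit and well-defined (independent of the choice of homogeneous coordinates, because $F_1$ is homogeneous of degree $3$); (4) evaluate on the three representatives $R_i$ to see the three orbits hit three distinct cube-cosets; (5) conclude equality of the fibres with the orbits by counting — there are exactly three orbits and exactly three cosets, and each orbit lands in one coset injectively, hence bijectively. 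The main obstacle is step (2): one must be careful that the linear action $\tilde{M}_{a,b,c,d}$ on the Plücker coordinates $(X_1,\dots,X_6)$ really does scale $F_1$ by a cube and not by a mere square or an arbitrary element — this hinges on $ad+bc = 1$ (so the "naive" scalar is $1$) together with the fact that on $\Pi$ (where $X_3 = X_4$) the form $F_1$ restricts compatibly with the conic structure on $\cX$, $\cX'$; tracking these factors through the unipotent generator is where the genuine computation lies, but it is elementary field arithmetic once set up.
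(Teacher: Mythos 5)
Your proposal follows essentially the same route as the paper: $F_1$ is homogeneous of degree $3$, so its value modulo the subgroup of cubes in $\F_q^*$ is a projectively well-defined quantity, and one then checks $\tilde G_1$-invariance on the representatives $R_i=U_1+\w^{i-1}U_5$ and concludes since the three orbits land in three distinct cosets. The worry you raise in your step (2) dissolves immediately upon computation: with the action $P\mapsto \tilde M_{a,b,c,d}P$ one gets $R_i^g=(a^4,\,\w^{i-1}b^2,\,\ast,\,\ast,\,\w^{i-1}d^2,\,c^4)$, hence $F_1(R_i^g)=\w^{2(i-1)}(a^4d^4+b^4c^4)=\w^{2(i-1)}(ad+bc)^4=\w^{2(i-1)}=F_1(R_i)$ exactly, not merely up to a cube. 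The normalization discrepancy you noticed is real, but your attempted repair is not: since $F_1 \bmod$ cubes is well defined on projective points, rescaling $R_i$ cannot move its value into a different coset; the computation places $\cZ_2$ in the coset $\w^{2}\cdot(\mathrm{cubes})$ and $\cZ_3$ in $\w\cdot(\mathrm{cubes})$, so the lemma as stated simply interchanges the labels of $\cZ_2$ and $\cZ_3$ relative to the definition $\cZ_i=R_i^{\tilde G_1}$ (harmless downstream, as $\cZ_2$ and $\cZ_3$ play symmetric roles in all later tables). With that relabelling understood, your steps (1)--(5) are exactly the paper's argument.
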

\begin{proof}
First of all observe that $F_1(\lambda P) = \lambda^3 F_1(P)$. Since $\cZ_i = R_i^{\tilde{G}_1}$, where $R_{i} = U_1 + \w^{i-1} U_5$, $i = 1,2,3$, it is enough to see that if $g \in \tilde{G}_1$ is represented by $\tilde{M}_{a,b,c,d}$, then $F_1(R_i) = F_1(R_i^g) = \w^{2(i-1)}$.
\end{proof}

\begin{prop}
Let $q = 2^n \equiv 1 \pmod{3}$. A line of $\cL_3$ has $3$ points in common with $\cY$, $(q - (-1)^{\frac{n}{2}} 2 \sqrt{q} -2)/3$ points in common with $\cZ_1$ and $(q + (-1)^{\frac{n}{2}} \sqrt{q} - 2)/3$ in common with both $\cZ_2$ and $\cZ_3$. A line of $\cL_0$ has $(q - (-1)^{\frac{n}{2}} 2 \sqrt{q} + 1)/3$ points in common with $\cZ_1$ and $(q + (-1)^{\frac{n}{2}} \sqrt{q} + 1)/3$ in common with both $\cZ_2$ and $\cZ_3$. 
\end{prop}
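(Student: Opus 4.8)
The plan is to use Lemma~\ref{technical}: for a point $P$ of $\cQ(4,q)$ one has $P\in\cY$ iff $F_1(P)=0$ and, since $q\equiv1\pmod3$, $P\in\cZ_i$ iff $F_1(P)$ lies in the cube coset $\w^{i-1}(\F_q^*)^3$. Fix a multiplicative character $\chi$ of $\F_q^*$ of order $3$. Given a line $r$ of $\cQ(4,q)$, I would choose representative vectors $A,B$ of two of its points and set $G_r(\mu,\nu)=F_1(\mu A+\nu B)$, a binary cubic form; since $F_1(\lambda P)=\lambda^3F_1(P)$, the cube coset of $G_r(\mu,\nu)$ depends only on $(\mu:\nu)\in r$, and $G_r\equiv0$ is impossible for $r\in\cL_3\cup\cL_0$. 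Orthogonality gives $\mathbf 1[z\in\w^{i-1}(\F_q^*)^3]=\tfrac13\sum_{k=0}^{2}\chi^k(z)\,\overline{\chi(\w)^{k(i-1)}}$ for $z\neq0$, so the number $N_i$ of points of $r$ in $\cZ_i$ is $\tfrac13\big[(q+1-|r\cap\cY|)+\sum_{k=1}^{2}\overline{\chi(\w)^{k(i-1)}}\,S_k\big]$, where $S_k=\sum_{P\in r,\,F_1(P)\neq0}\chi^k(F_1(P))$ and $S_2=\overline{S_1}$; here $|r\cap\cY|=3$ for $r\in\cL_3$ and $=0$ for $r\in\cL_0$. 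Because $\tilde G_1$ stabilises $\cY$ and each $\cZ_i$ by Proposition~\ref{para}, $S_1$ depends only on the $\tilde G_1$-orbit of $r$, so it suffices to evaluate it on one representative of $\cL_3$ and one of $\cL_0$.

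\textbf{The two cubics.} For $\cL_3$ I would take an explicit line $r_3\in\cL_3$ and, reparametrising it so that its three points of $\cY$ sit at $(1:0),(0:1),(1:1)$, obtain $G_{r_3}(\mu,\nu)=c\,\mu\nu(\mu+\nu)$ for some $c\in\F_q^*$; a direct computation with the chosen representative shows $\chi(c)=1$. Then $S_1=\chi(c)\sum_{\lambda\in\F_q}\chi(\lambda^2+\lambda)$. For $\cL_0$ I would take an explicit $r_0\in\cL_0$; since $F_1|_{r_0}$ has no zero, $g_0(\lambda):=G_{r_0}(\lambda,1)$ is a cubic polynomial irreducible over $\F_q$, and one checks it can be taken (up to a cube factor, contributing $1$ under $\chi$) with coefficients in a small subfield, e.g. $g_0(\lambda)=\lambda^3+\lambda+1$ when $3\nmid n$, with a suitable $\F_4$-rational replacement in the remaining cases.

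\textbf{Evaluation.} For $\cL_3$: the Artin--Schreier map $\lambda\mapsto\lambda^2+\lambda$ is two-to-one onto $H=\{z:\Tr_{q|2}(z)=0\}$, so $\sum_\lambda\chi(\lambda^2+\lambda)=2\sum_{z\in H\setminus\{0\}}\chi(z)=G(\chi,\psi)$, the cubic Gauss sum of $\F_q$ with respect to the canonical additive character $\psi$; by the Davenport--Hasse lifting relation applied to $\F_4\subseteq\F_q$, where the cubic Gauss sum equals $2$, one gets $G(\chi,\psi)=-(-2)^{n/2}=-(-1)^{n/2}\sqrt q$, hence $S_1=S_2=-(-1)^{n/2}\sqrt q$. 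For $\cL_0$: the curve $E\colon y^3=g_0(\lambda)$ is a smooth plane cubic (as $g_0$ is separable) carrying the order-$3$ automorphism $y\mapsto\z_3\,y$, so $j(E)=0$ and $E$ is supersingular in characteristic $2$; a direct count gives $\#E(\F_4)=9$, i.e. the $\F_4$-Frobenius is the scalar $[-2]$, whence over $\F_q=\F_{4^{n/2}}$ it is the scalar $[(-2)^{n/2}]$ and $\#E(\F_q)=q+1-2(-1)^{n/2}\sqrt q$; since Frobenius is a scalar it acts equally on the two nontrivial eigenspaces of the $\mu_3$-action on $H^1(E)$, and $\#E(\F_q)=q+1+S_1+S_2$ then yields $S_1=S_2=-(-1)^{n/2}\sqrt q$ again. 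Substituting $S_1=S_2=-(-1)^{n/2}\sqrt q$ into the formula of the first paragraph with $|r\cap\cY|=3$ (resp.\ $0$) produces exactly the asserted values of $N_1$ and $N_2=N_3$ for $\cL_3$ (resp.\ $\cL_0$).

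\textbf{Main obstacle.} The hard part will be the $\cL_0$ case: producing a concrete representative line of $\cL_0$, verifying that the restriction of $F_1$ to it is, up to a cube, an irreducible cubic definable over $\F_2$ or $\F_4$ (watching the subcase $6\mid n$, where the $\F_2$-irreducible cubics split over $\F_q$ and a genuinely $\F_4$-rational model is needed), and pinning down precisely which supersingular $j=0$ curve $y^3=g_0$ is — i.e.\ ruling out the other admissible Frobenius traces $\pm\sqrt q$ — together with the $\mathbb{Z}[\z_3]$-CM bookkeeping that forces the equality $S_1=S_2$ (equivalently $N_2=N_3$). By comparison, for $\cL_3$ the only delicate point is the verification $\chi(c)=1$, a finite and routine computation.
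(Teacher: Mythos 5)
Your proposal is correct in outline and reaches the stated values, but it takes a genuinely different route from the paper for the key evaluation. The paper parametrizes explicit representatives $r_x=\{(1,\mu,0,0,x,\mu x)\}\cup\{U_2+xU_6\}$ ($x$ a non-cube) and $r=\{(1,\mu,0,0,1,\mu)\}\cup\{U_2+U_6\}$, observes via Lemma~\ref{technical} that membership in $\cZ_1$ amounts to solvability of the diagonal equations $x\mu^3+\lambda^3=x^2$ and $\mu^3+\lambda^3=1$, and then simply cites Wolfmann's explicit count of solutions of $a\mu^3+b\lambda^3=c$ over $\F_{2^n}$; the equality $|r\cap\cZ_2|=|r\cap\cZ_3|$ is obtained by an elementary Frobenius-squaring argument ($P_\mu\in r_x\cap\cZ_2\iff P_{\mu^2}\in r_{x^2}\cap\cZ_3$) combined with orbit-invariance, rather than by your rationality-of-$S_1$ argument. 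Your character-orthogonality bookkeeping and the identity $S_1=G(\chi,\psi)=-(-1)^{n/2}\sqrt q$ via Davenport--Hasse are sound and in substance equivalent to what underlies Wolfmann's formula; what your route buys is a self-contained evaluation, at the price of having to identify the relevant Jacobi/Gauss sums yourself.

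The one step that would fail as written is the $\cL_0$ normalization. For the natural representative $r_x$ the restriction of $F_1$ is $x\mu^3+x^2=x(\mu^3+x)$ with $x$ a \emph{non-cube}, so the scalar you factor out contributes $\chi(x)\neq 1$, not $1$; moreover $x$ need not lie in any proper subfield, so replacing $g_0$ by $\lambda^3+\lambda+1$ changes the cubic-twist class and, a priori, the value of $S_1$ (the three twists of the $j=0$ curve have traces $\pm 2(-2)^{n/2}$ and $\mp(-2)^{n/2}$, so "which twist" genuinely matters). The computation does close up --- one finds $S_1=\chi(x)\bigl(\chi(x)^2J(\chi,\chi)-1\bigr)+\chi(x)=J(\chi,\chi)=-(-1)^{n/2}\sqrt q$, the non-cube factors cancelling exactly --- but this cancellation is the content you would still need to supply; it is precisely the point that the paper's appeal to Wolfmann (whose formula already tracks the cube classes of $a,b,c$) handles for free. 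You correctly flagged this as the main obstacle; resolving it by the Jacobi-sum cancellation above is shorter than the supersingular-curve/CM route you sketch.
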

\begin{proof}
Recall that the points of a line of $\cQ(4, q)$ in $\cL_0$ or $\cL_3$ are the image under the Pl\"ucker embedding of the lines of $\cW(3, q)$ through a point $Q$ belonging to $\cO_0$ or $\cO_3$, respectively. Since $q \equiv 1 \pmod{3}$, by Proposition~\ref{points}, we may assume $Q = (1,0,0,x)$, where $x \in \F_q$ is not a cube, or $Q = (1,0,0,1)$, respectively. Therefore a representative for $\cL_0$ is the line 
\begin{align*}
    & r_x = \{(1,\mu,0,0,x,\mu x) \mid \mu \in \F_q\} \cup \{U_2 + x U_6\},
\end{align*}
whereas for $\cL_3$ is the line 
\begin{align*}
    & r = \{(1,\mu,0,0,1,\mu) \mid \mu \in \F_q\} \cup \{U_2 + U_6\}.    
\end{align*}
Taking into account Lemma~\ref{technical}, $|r_x \cap \cZ_i|$ equals the number of points of $r_x$ whose evaluation of $F$ has the form $\w^{3j+i-1}$, for some $j$. In particular, the point $U_2 + x U_6$ belongs to $\cZ_2$ or $\cZ_3$, according as $x$ equals $\w^{3l+1}$ or $\w^{3l+2}$. The point $P_\mu = (1,\mu,0,0,x,\mu x)$ belongs to $\cZ_1$ if and only if there exists $\lambda \in \F_q$ such that
\begin{align}
    & x \mu^3 + \lambda^3 = x^2. \label{eq5}
\end{align}
By \cite[Theorem 1]{W}, in both cases $x = \w^{3l+1}$ or $x = \w^{3l+2}$, the number of pairs $(\mu, \lambda) \in \F_q^2$ satisfying \eqref{eq5} is $q - (-1)^{\frac{n}{2}}2\sqrt{q} + 1$. Since for each $\mu$, there are three distinct $\lambda$ satisfying \eqref{eq5}, it follows that 
\begin{align*}
    & |r_x \cap \cZ_1| = \frac{q - (-1)^{\frac{n}{2}}2\sqrt{q} + 1}{3}.
\end{align*}
Observe that $P_\mu \in r_x \cap \cZ_2$ if and only if $P_{\mu^2} \in r_{x^2} \cap \cZ_3$. Hence $|r_x \cap \cZ_2| = |r_{x^2} \cap \cZ_3|$. On the other hand $|r_x \cap \cZ_3| = |r_{x^2} \cap \cZ_3|$, since $r_x$ and $r_{x^2}$ are in the same $\tilde{G}_h$-orbit. Therefore
\begin{align*}
    & |r_x \cap \cZ_2| = |r_x \cap \cZ_3| = \frac{q+1 - |r_0 \cap \cZ_1|}{2} = \frac{q + (-1)^{\frac{n}{2}}\sqrt{q} + 1}{3}.
\end{align*}

Similarly, the point $U_2 + U_6$ belongs to $\cZ_1$, whereas the point $P_\mu = (1,\mu,0,0,1,\mu)$ belongs to $\cZ_1$ if and only if $\mu^3 \ne 1$ and there exists $\lambda \in \F_q$ such that
\begin{align}
    & \mu^3 + \lambda^3 = 1. \label{eq6}
\end{align}
By using again \cite[Theorem 1]{W} (or \cite[Corollary 4]{W}), we find that the number of pairs $(\mu, \lambda) \in \F_q^2$ satisfying \eqref{eq6} is $q - (-1)^{\frac{n}{2}}2\sqrt{q} - 2$. Among these solutions we have to exclude $(1, 0)$, $(\w^{\frac{q-1}{3}}, 0)$, $(\w^{\frac{2(q-1)}{3}}, 0)$ and for each $\mu$ of the remaining pairs, there are three distinct $\lambda$ satisfying \eqref{eq6}, it follows that 
\begin{align*}
    & |r \cap \cZ_1| = \frac{q - (-1)^{\frac{n}{2}}2\sqrt{q} - 2}{3}.
\end{align*}
Finally $P_\mu \in r \cap \cZ_2$ if and only if $P_{\mu^2} \in r \cap \cZ_3$. Hence 
\begin{align*}
    & |r \cap \cZ_2| = |r \cap \cZ_3| = \frac{q+1 - 3 - |r_0 \cap \cZ_1|}{2} = \frac{q + (-1)^{\frac{n}{2}}\sqrt{q} - 2}{3}.
\end{align*}
\end{proof}

Therefore the number of lines of $\cL_i$ through a point of $\cQ(4, q)$ of a given $\tilde{G}_1$-orbit on points of $\cQ(4, q)$ can be deduced as shown in Tables~\ref{-1} and \ref{+1}.

\begin{table}[ht]\caption{Number of lines of $\cL_i$ through a point of $\cQ(4, q)$, $q \equiv -1 \pmod{3}$}\label{-1} 
\centering 
\begin{tabular}{|c|c|c|c|c|}
\hline 
$\quad$ & $\cX$ & $\cX'$ & $\cY \setminus (\cX \cup \cX')$ & $\cZ_1$ \\
\hline 
$\cL_{q+1}$ & $1$ & $1$ & $1$ & $0$ \\
$\cL_0$ & $0$ & $0$ & $0$ & $(q+1)/3$ \\
$\cL_1$ & $0$ & $0$ & $q/2$ & $q/2$ \\
$\cL_2$ & $q$ & $q$ & $0$ & $1$ \\
$\cL_3$ & $0$ & $0$ & $q/2$ & $(q-2)/6$ \\
\hline
\end{tabular} 
\end{table}

\begin{table}[ht]\caption{Number of lines of $\cL_i$ through a point of $\cQ(4, q)$, $q \equiv 1 \pmod{3}$}\label{+1} 
\centering 
\begin{tabular}{|c|c|c|c|c|c|c|}
\hline 
$\quad$ & $\cX$ & $\cX'$ & $\cY \setminus (\cX \cup \cX')$ & $\cZ_1$ & $\cZ_2$ & $\cZ_3$ \\ 
\hline 
$\cL_{q+1}$ & $1$ & $1$ & $1$ & $0$ & $0$ & $0$ \\
$\cL_0$ & $0$ & $0$ & $0$ & $(q - (-1)^{\frac{n}{2}} 2 \sqrt{q} + 1)/3$ & $(q + (-1)^{\frac{n}{2}} \sqrt{q} + 1)/3$ & $(q + (-1)^{\frac{n}{2}} \sqrt{q} + 1)/3$ \\
$\cL_1$ & $0$ & $0$ & $q/2$ & $(q + (-1)^{\frac{n}{2}}2\sqrt{q})/2$ & $(q - (-1)^{\frac{n}{2}}\sqrt{q})/2$ & $(q - (-1)^{\frac{n}{2}}\sqrt{q})/2$ \\
$\cL_2$ & $q$ & $q$ & $0$ & $1$ & $1$ & $1$ \\
$\cL_3$ & $0$ & $0$ & $q/2$ & $(q - (-1)^{\frac{n}{2}} 2 \sqrt{q} -2)/6$ & $(q + (-1)^{\frac{n}{2}} \sqrt{q} - 2)/6$ & $(q + (-1)^{\frac{n}{2}} \sqrt{q} - 2)/6$ \\
\hline
\end{tabular} 
\end{table}

\subsubsection{The number of lines of $\cL_i$ in a regulus of $\cQ(4, q)$}

By Theorem~\ref{klein}, the group $\tilde{G}_h$ has $2(q+1)$ orbits on reguli of $\cQ(4, q)$. Denote by $\cS$ one of these $2(q+1)$ orbits and by $\Sigma$ a solid of $\Pi$ such that 
\begin{itemize}
\item[{\em i)}] $\Sigma \cap \cQ(4, q)$ is a three-dimensional hyperbolic quadric $\cQ^+$;
\item[{\em ii)}] exactly one of the two reguli $\cR_1$, $\cR_2$ of $\cQ^+$ belongs to $\cS$. 
\end{itemize}
Recall that $\cY$ consists of the $q+1$ lines 
\begin{align*}
    & \ell_x = \{(1, \lambda, x^2+\lambda x, x^2+\lambda x, \lambda x^2, x^4) \mid \lambda \in \F_q\} \cup \{(0,1,x,x,x^2,0)\}, \; x \in \F_q, \\
    & \ell_\infty = \{(0,0,0,0,1, \lambda) \mid \lambda \in \F_q\} \cup \{U_6\}.
\end{align*}

\begin{theorem}\label{AllReguli}
Let $\zeta$ be a fixed element in $\F_{q^2} \setminus \F_q$ such that $(\z+1)^{q+1} = 1$. With the above notation, one of the following cases occurs.

\begin{enumerate}

\item If $\cS$ is one of the two orbits of size $q(q+1)/2$, then
\begin{itemize}
	\item $|\cL_{q+1} \cap \cR_1| = 2$, $|\cL_2 \cap \cR_1| = 0$, 
		\begin{align*}
			& |\cL_0 \cap \cR_1| = 2(q-1)/3, |\cL_1 \cap \cR_1| = 0, |\cL_3 \cap \cR_1| = (q-1)/3, && \mbox{ if } q \equiv 1 \pmod{3}, \\
			& |\cL_0 \cap \cR_1| = 0, |\cL_1 \cap \cR_1| = q-1, |\cL_3 \cap \cR_1| = 0, && \mbox{ if } q \equiv -1 \pmod{3}.	
		\end{align*}
	\item $|\cL_{q+1} \cap \cR_2| = |\cL_0 \cap \cR_2| = |\cL_1 \cap \cR_2| = 0$, $|\cL_2 \cap \cR_2| = 2$, $|\cL_3 \cap \cR_2| = q-1$.
\end{itemize}

\item If $\cS$ is one of the two orbits of size $q^3-q$, then 
\begin{itemize}
	\item $|\cL_{q+1} \cap \cR_1| = 1$, $|\cL_2 \cap \cR_1| = 1$, 
		\begin{align*}
			& |\cL_0 \cap \cR_1| = (q-1)/3, |\cL_1 \cap \cR_1| = q/2, |\cL_3 \cap \cR_1| = (q-4)/6, && \mbox{ if } q \equiv 1 \pmod{3}, \\
			& |\cL_0 \cap \cR_1| = (q+1)/3, |\cL_1 \cap \cR_1| = (q-2)/2, |\cL_3 \cap \cR_1| = (q-2)/6, && \mbox{ if } q \equiv -1 \pmod{3}.	
		\end{align*}
	\item $|\cL_{q+1} \cap \cR_2| = |\cL_0 \cap \cR_2| = 0, |\cL_1 \cap \cR_2| = q/2$, $|\cL_2 \cap \cR_2| =2$, $|\cL_3 \cap \cR_2| = (q-2)/2$.
\end{itemize}

\item If $\cS$ is one of the $q-2$ orbits with $|\cS| = (q^3-q)/2$ and $|\Sigma \cap \cX| = 2$, let 
\begin{align*}
	& \u_i = |\{x \in \F_q \setminus \{0,z+1,(z+1)^{-1}\} \mid \Tr_{q|2}\left(f_i(x)\right) = 0\}|, 
\end{align*}
where 
\begin{align*}
	& f_1(x) = \frac{1}{z^2} \left(\frac{x(z+1)}{z^2}+\frac{z+1}{xz^2}+1\right), & f_2(x) = \frac{z^2+1}{z^2} \left(\frac{x(z+1)}{z^2}+\frac{z+1}{xz^2}+1\right), 
\end{align*}	
	for some $z \in \F_q \setminus \{0,1\}$. Then
\begin{align*}
	& |\cL_{q+1} \cap \cR_i| = 0, |\cL_0 \cap \cR_i| = (2\u_i+6)/3, |\cL_1 \cap \cR_i| = q-\u_i-3, |\cL_2 \cap \cR_i| = 2, |\cL_3 \cap \cR_i| = \u_i/3, \\
	& i \in \{1, 2\}. 
\end{align*}

\item If $\cS$ is one of the two orbits of size $q(q-1)/2$, then
\begin{itemize}
	\item $|\cL_{q+1} \cap \cR_1| = |\cL_2 \cap \cR_1| = 0$, 
		\begin{align*}
			& |\cL_0 \cap \cR_1| = |\cL_3 \cap \cR_1| = 0, |\cL_1 \cap \cR_1| = q+1, && \mbox{ if } q \equiv 1 \pmod{3}, \\
			& |\cL_0 \cap \cR_1| = 2(q+1)/3, |\cL_1 \cap \cR_1| = 0, |\cL_3 \cap \cR_1| = (q+1)/3, && \mbox{ if } q \equiv -1 \pmod{3}.	
		\end{align*}
	\item $|\cL_{q+1} \cap \cR_2| = |\cL_0 \cap \cR_2| = |\cL_2 \cap \cR_2| = |\cL_3 \cap \cR_2| = 0$, $|\cL_1 \cap \cR_2| = q+1$.
\end{itemize}

\item If $\cS$ is one of the $q-2$ orbits with $|\cS| = (q^3-q)/2$ and $|\Sigma \cap \cX| = 0$, let 
\begin{align*}
	& \v_i = |\{x \in \F_q \mid \Tr_{q|2}\left(g_i(x)\right) = 0\}|, 
\end{align*}
where 
\begin{align*}
	& g_1(x) = \frac{\frac{\z^{2(q+1)}}{t^{2(q+1)}} \left( x^2 + \frac{t^{q+1}}{\z^{q+1}}x + \frac{t^2}{t^{q+1}} + \frac{t^{q+1}}{\z^2} \right) \left( \left( \frac{t^2}{t^{q+1}} + \frac{t^{q+1}}{\z^2} \right)x^2 + \frac{t^{q+1}}{\z^{q+1}}x + 1\right)}{(x^2+\z^{q+1}+1)^2}, \\
	& g_2(x) = \frac{\frac{\z^{2(q+1)}}{t^{2(q+1)}} \left( x^2 + \frac{t^{q+1}}{\z^{q+1}}x + \frac{t^{q+1}}{t^{2}} + \frac{t^{q+1}}{\z^2} \right) \left( \left( \frac{t^{q+1}}{t^{2}} + \frac{t^{q+1}}{\z^2} \right)x^2 + \frac{t^{q+1}}{\z^{q+1}}x + 1 \right)}{(x^2+\z^{q+1}+1)^2},
\end{align*}	
	for some $t \in \F_{q^2} \setminus \left\{0, \z^2, \z^{2q}\right\}$, $(t+1)^{q+1} = 1$. Then
\begin{align*}
	& |\cL_{q+1} \cap \cR_i| = 0, |\cL_0 \cap \cR_i| = (2\v_i+6)/3, |\cL_1 \cap \cR_i| = q-\v_i-2, |\cL_2 \cap \cR_i| = 0, |\cL_3 \cap \cR_i| = (\v_i+3)/3, \\
	& \mbox{ or } \\
	& |\cL_{q+1} \cap \cR_i| = 0, |\cL_0 \cap \cR_i| = 2\v_i/3, |\cL_1 \cap \cR_i| = q-\v_i+1, |\cL_2 \cap \cR_i| = 0, |\cL_3 \cap \cR_i| = \v_i/3,  \\
	& i \in \{1, 2\}.
\end{align*}
according as $\Tr_{q|2} \left( \frac{z^2}{t^{q+1}} \left(1 + \frac{z^{2q}}{t^2}\right) \right)$ equals $0$ or $1$, respectively.
\end{enumerate}
\end{theorem}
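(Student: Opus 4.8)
The plan is to transfer everything through the Pl\"ucker correspondence and reduce the statement to a point--orbit count. Recall that a point $P$ of $\PG(3,q)$ is sent to the line of $\cQ(4,q)$ coming from the pencil of generators of $\cW(3,q)$ through $P$, that this line lies in $\cL_j$ (resp.\ $\cL_{q+1}$) exactly when $P\in\cO_j$ (resp.\ $P\in\cA$), and that if $\ell$ is a line of $\PG(3,q)$ which is not a generator then the $q+1$ lines attached to its points form one of the two reguli of the hyperbolic quadric $\cQ^+\subseteq\cQ(4,q)$ associated with $\ell,\ell^\s$, the other regulus coming from $\ell^\s$. Hence $|\cL_j\cap\cR|=|\cO_j\cap\ell|$, and Theorem~\ref{AllReguli} amounts to determining the $G_1$--orbit distribution of the points of one representative line in each of the $2(q+1)$ orbits; the pairs $(\Sigma,\cQ^+)$ produced in Corollary~\ref{representatives} are chosen precisely to provide these representatives. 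Equivalently, the orbit of a line $r$ of $\cQ(4,q)$ can be read off directly from $\cY$: for $h=1$ the hypersurface $\cY$ is $\{X_1X_5^2+X_2^2X_6=0\}\cap\Pi$, and $r$ lies in $\cL_0,\cL_1,\cL_2,\cL_3,\cL_{q+1}$ according as $|r\cap\cY|$ equals $0,1,2,3,q+1$.

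For the count on a fixed representative line $\ell$ I would use the osculating cubic of a point. For $h=1$ a point $P=(p_1,p_2,p_3,p_4)$ lies on $P_t^\s$ (resp.\ $U_4^\s$) exactly when $(t:1)$ (resp.\ $(1:0)$) is a root in $\PG(1,q)$ of the binary cubic $C_P(t,u)=p_1t^3+p_2t^2u+p_3tu^2+p_4u^3$; by Proposition~\ref{points} a point $P\notin\cA$ lies in $\cO_0,\cO_1,\cO_3$ according as $C_P$ has $0,1,3$ distinct roots, while $P\in\cO_2$ when it has exactly two, and one checks that $C_P$ has a repeated root iff $p_1p_4=p_2p_3$ iff $P\in\cQ^+(3,q)$, and that $C_P$ is the cube of a linear form iff $P\in\cA$. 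Parametrizing the points of $\ell$ as $P_\lambda=\lambda V+\mu W$, the family $C_{P_\lambda}=\lambda C_V+\mu C_W$ is a pencil of binary cubics; its members with a repeated root form a conic in $(\lambda:\mu)$ (realizing the $|\ell\cap\cQ^+(3,q)|$ points of $\ell$, split into perfect cubes, which give $\cA\cap\ell$, and the rest, which give $\cO_2\cap\ell$), while the remaining members are separable and contribute to $\cO_0,\cO_1,\cO_3$ by their number of rational roots. Since in all the relevant orbits the pencil is base-point free, the total number of roots summed over the $q+1$ members equals $q+1$; combined with the conic count, the whole distribution is therefore pinned down by the single further number $3\,|\cO_3\cap\ell|$.

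It remains to pin down $|\cO_3\cap\ell|$ in each orbit. For the orbits of sizes $q(q\pm1)/2$ and $q^3-q$ (Cases~1, 2, 4) the representative $\ell$ is a chord, an axis, or similarly special, and the pencil $C_{P_\lambda}$ is either constant in factorization type or governed only by which elements of $\F_q^\ast$ are cubes, i.e.\ by $\gcd(3,q-1)$, so the numbers come out explicitly and yield the stated $q\equiv\pm1\pmod3$ dichotomy (the presence of a base point for an axis is what forces the companion regulus to carry lines of $\cL_2$ rather than of $\cL_{q+1}$). For the two families of size $(q^3-q)/2$ (Cases~3 and~5) one reparametrizes the separable members of the base-point-free pencil by one of their roots: for $x$ in the indicated range there is a unique member with $(x:1)$ as a root, and writing $C_{P_\lambda}(t,1)=(t+x)\,Q_{i,x}(t)$ with $Q_{i,x}$ a quadratic over $\F_q$, that member has three rational roots — equivalently the corresponding point of $\ell$ lies in $\cO_3$ — precisely when $Q_{i,x}$ splits, i.e.\ when $\Tr_{q|2}$ of its Artin--Schreier invariant vanishes; the computation is to check that this invariant is exactly $f_i(x)$ (resp.\ $g_i(x)$) and that the excluded $x$ are exactly those realized as a root of a repeated-root member. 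Then $3\,|\cO_3\cap\ell|=\u_i$ (resp.\ $\v_i$), and the displayed formulas follow from the bookkeeping of the previous paragraph. In Case~5 the leading coefficient of $C_{P_\lambda}$ vanishes at one parameter — there the relevant osculating plane is $U_4^\s$ — and whether the line so obtained lies in $\cL_1$ or $\cL_3$ is decided by the value $0$ or $1$ of the extra trace invariant displayed in the statement, which accounts for the two alternatives.

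The conceptual content is light; the main obstacle is the explicit algebra in Cases~3 and~5. Starting from the bulky equations of Corollary~\ref{representatives} one must, for each of the two reguli, write down the corresponding pencil of osculating cubics (equivalently restrict $X_1X_5^2+X_2^2X_6$ to the lines of the regulus), perform the division by $t+x$, and control the surviving quadratic closely enough that its Artin--Schreier invariant is literally $f_1(x),f_2(x)$ (resp.\ $g_1(x),g_2(x)$) and the exceptional set is exactly the one in the statement. A second, genuine difficulty is to show that the two reguli of a single $\cQ^+$ fall under the same formula with $f_1\leftrightarrow f_2$ (resp.\ with the two $\v_i$--alternatives), which is not forced a priori because $\tilde{G}_1$ does not interchange the rulings and $\cY$ lies asymmetrically with respect to them.
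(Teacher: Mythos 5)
Your reduction is sound and, once unwound, it is the paper's own computation read through the inverse Pl\"ucker map. The paper parametrizes $\cQ^+\cap\cY$ by points $T_x$, fixes one line of each regulus, and asks, for fixed $x$, for which $y$ the secant $\langle T_x,T_y\rangle$ lies in $\cL_3\cap\cR_j$; this is a quadratic in $y$ whose solvability is governed by the trace of its Artin--Schreier invariant, which is exactly your criterion ``does the quadratic cofactor $Q_{i,x}$ split''. Your bookkeeping is also correct where it applies: writing $a=|\cA\cap\ell|$, $b=|\cO_2\cap\ell|$, $n_j=|\cO_j\cap\ell|$, a base-point-free pencil gives $a+2b+n_1+3n_3=q+1$, hence $n_0=b+2n_3$ and $n_1=q+1-a-2b-3n_3$, and these identities reproduce every row of the tables. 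One caveat: the blanket assertion that ``in all the relevant orbits the pencil is base-point free'' is false as stated --- for the second regulus in Cases 1 and 2 the line $\ell^\s$ lies in two, resp.\ one, osculating planes, so the pencil has base points and the identity $a+2b+n_1+3n_3=q+1$ fails there. You do treat the axes separately, but this must be said precisely, since that identity is what you invoke to ``pin down the whole distribution''.

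The genuine gap is that the actual content of the theorem --- the functions $f_1,f_2,g_1,g_2$, their excluded arguments, and the explicit constants in Cases 1, 2 and 4 --- is nowhere derived. To execute your plan you must invert the Pl\"ucker map on the representative solids of Corollary~\ref{representatives} to obtain the lines $\ell,\ell^\s$ of $\PG(3,q)$ (and decide which carries $\cR_1$ and which $\cR_2$, which matters in the asymmetric cases), write down the two pencils of binary cubics, perform the division by $t+x$, and check that the resulting Artin--Schreier invariants are literally $f_i(x)$ and $g_i(x)$ with exactly the stated exceptional sets; you must also settle the member at infinity in Case 5 via the extra trace condition. You acknowledge all of this, and in addition you flag as unresolved the claim that the two reguli of one quadric obey the same formula with $f_1\leftrightarrow f_2$ (resp.\ the same $\v_i$-alternative). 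The paper does not obtain any of this from symmetry: it computes $\cQ^+\cap\cY$ and the rank conditions for both reguli in each of the five cases. So what you have is a correct and somewhat more conceptual proof plan, equivalent to the paper's, but the case-by-case algebra that actually establishes the displayed formulas --- the bulk of the paper's proof --- is missing.
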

\begin{proof}
For each of the five cases listed in Corollary~\ref{representatives}, we determine $\cQ^+ \cap \cY$ and hence $|\cL_i \cap \cR_j|$, $i \in \{0, 1, 2, 3, q+1\}$, $j \in \{1,2\}$.

If $\cS$ is one of the two orbits of size $q(q+1)/2$ then
\begin{align*}
    & \cQ^+ \cap \cY = \{(1,x,0,0,x^3,x^4) \mid x \in \F_q \setminus \{0\}\} \cup \ell_0 \cup \ell_\infty.
\end{align*}
Assume that $\ell_0 \in \cR_1$. Then $|\cL_{q+1} \cap \cR_1| = |\cL_2 \cap \cR_2| = 2$ and $|\cL_{q+1} \cap \cR_2| = |\cL_2 \cap \cR_1| = 0$. We infer that $|\cL_{3} \cap \cR_2| = q-1$ and $|\cL_1 \cap \cR_2| = |\cL_0 \cap \cR_2| = 0$. The two points $(1,x,0,0,x^3,x^4)$, $(1,y,0,0,y^3,y^4)$, $x, y \in \F_q \setminus \{0\}$, $x \ne y$, belonging to $\cY \setminus (\cX \cup \cX')$ define a line of $\cL_3 \cap \cR_1$ if and only if 
\begin{align*}
    & \rk 
    \begin{pmatrix}
        1 & x & 0 & 0 & x^3 & x^4 \\
        1 & y & 0 & 0 & y^3 & y^4 \\
        1 & 0 & 0 & 0 & 0 & 0 \\
        0 & 0 & 0 & 0 & 1 & 0 
    \end{pmatrix} = 3,
\end{align*}
which gives $xy(x^3+y^3) = 0$. For $q \equiv 1 \mod 3$, $|\cL_3 \cap \cR_1| = (q-1)/3$, $|\cL_1 \cap \cR_1| = 0$ and $|\cL_0 \cap \cR_1| = 2(q-1)/3$. For $q \equiv -1 \mod 3$, instead, it holds  $|\cL_3 \cap \cR_1| = |\cL_0 \cap \cR_1| = 0$ and $|\cL_1 \cap \cR_1| = q-1$. 

Let $\cS$ be one of the two orbits of size $q^3-q$, then 
\begin{align*}
    & \cQ^+ \cap \cY = \{(x,x^2+1,x,x,x^2+x^4,x^5) \mid x \in \F_q\} \cup \ell_\infty.
\end{align*}
Assume that $\ell_\infty \in \cR_1$ and $\langle U_2, U_6 \rangle \in \cR_2$. Then $|\cL_{q+1} \cap \cR_1| = 1$, $|\cL_2 \cap \cR_1| = 1$ and $|\cL_{q+1} \cap \cR_2| = 0$, $|\cL_2 \cap \cR_2| = 2$. Let $s$ be the line defined by the two points $(x,x^2+1,x,x,x^2+x^4,x^5)$, $(y,y^2+1,y,y,y^2+y^4,y^5)$, $x, y \in \F_q \setminus \{0, 1\}$, $x \ne y$, of $\cY \setminus (\cX \cup \cX')$. The line $s$ belongs to $\cL_3 \cap \cR_1$ if and only if 
\begin{align*}
    & \rk 
    \begin{pmatrix}
        x & x^2+1 & x & x & x^2+x^4 & x^5 \\
        y & y^2+1 & y & y & y^2+y^4 & y^5 \\
        0 & 1 & 0 & 0 & 0 & 0 \\
        0 & 0 & 0 & 0 & 0 & 1 
    \end{pmatrix} = 3,
\end{align*}
which gives $xy(x+y)(y^2+xy+x^2+1) = 0$. Since 
\begin{align*}
    & \left\vert\left\{x \in \F_q \setminus \{0, 1\} \mid \Tr\left(1 + \frac{1}{x^2}\right) = 0\right\}\right\vert = 
    \begin{cases}
        \frac{q}{2} - 2 & \mbox{ if } q \equiv 1 \pmod{3}, \\
        \frac{q}{2} - 1 & \mbox{ if } q \equiv -1 \pmod{3}, 
    \end{cases}
\end{align*}
it follows that $|\cL_3 \cap \cR_1| = (q-4)/6$, $|\cL_1 \cap \cR_1| = q/2$ and $|\cL_0 \cap \cR_1| = (q-1)/3$, if $q \equiv 1 \pmod{3}$, whereas $|\cL_3 \cap \cR_1| = (q-2)/6$, $|\cL_1 \cap \cR_1| = (q-2)/2$ and $|\cL_0 \cap \cR_1| = (q+1)/3$, if $q \equiv -1~\pmod{3}$. The line $s$ belongs to $\cL_3 \cap \cR_2$ if and only if 
\begin{align*}
    & \rk 
    \begin{pmatrix}
        x & x^2+1 & x & x & x^2+x^4 & x^5 \\
        y & y^2+1 & y & y & y^2+y^4 & y^5 \\
        0 & 0 & 0 & 0 & 1 & 0 \\
        0 & 0 & 0 & 0 & 0 & 1 
    \end{pmatrix} = 3,
\end{align*}
which gives $(xy+1)(x+y) = 0$. Hence $|\cL_3 \cap \cR_2| = (q-2)/2$, $|\cL_1 \cap \cR_2| = q/2$ and $|\cL_0 \cap \cR_2| = 0$.  

Assume that $\cS$ is one of the $q-2$ orbits with $|\cS| = (q^3-q)/2$ and $|\Sigma \cap \cX| = 2$, then 
\begin{align*}
    \cQ^+ \cap \cY = & \left\{T_x \mid x \in \F_q \setminus \{0,z+1,(z+1)^{-1}\}\right\} \\
    & \cup \left\{U_2, U_5, (1,0,z^2+1,z^2+1,0,z^4+1), (1,0,(z^2+1)^{-1}, (z^2+1)^{-1},0,(z^4+1)^{-1})\right\}, 
\end{align*}    
where
\begin{align*}    
    T_x = \frac{z^2+1}{z^4} & \left(\frac{z^4}{z^2+1}, x^{-1}\left(x^4+\frac{z^4}{z^2+1}x^2+1\right), x^4+1, x^4+1, x \left(x^4+\frac{z^4}{z^2+1}x^2+1\right), \frac{z^4}{z^2+1} x^4\right),
\end{align*} 
for some $z \in \F_q \setminus \{0,1\}$. Suppose that $\langle (1,0,z^2+1,z^2+1,0,z^4+1), U_2 \rangle \in \cR_1$ and $\langle (1,0,(z^2+1)^{-1},(z^2+1)^{-1},0,(z^4+1)^{-1}), U_2 \rangle \in \cR_2$. In this case $|\cL_{q+1} \cap \cR_1| = |\cL_{q+1} \cap \cR_2| = 0$ and $|\cL_2 \cap \cR_1| = |\cL_2 \cap \cR_2| = 2$. The line $\langle T_x, T_y \rangle$, $x, y \in \F_q \setminus \{0,z+1,(z+1)^{-1}\}$, $x \ne y$, belongs to $\cL_3 \cap \cR_2$ or to $\cL_3 \cap \cR_1$ if and only if 
\begin{align}
    & (x+z+1)^2 y^2 + \frac{x z^4}{z^2+1} y + (x(z+1)+1)^2 = 0 \mbox{ or } \label{eq9} \\
    & (x(z+1)+1)^2 y^2 + x z^4 y + (x+z+1)^2 = 0. \label{eq10}
\end{align}
For a fixed $x$, equation \eqref{eq9} or \eqref{eq10} has a solution in $y$ if and only if 
\begin{align*}
    & \Tr_{q|2}\left( \frac{z^2+1}{z^2} \left(\frac{x(z+1)}{z^2}+\frac{z+1}{xz^2}+1\right) \right) = 0 \mbox{ or } \Tr_{q|2}\left( \frac{1}{z^2} \left(\frac{x(z+1)}{z^2}+\frac{z+1}{xz^2}+1\right) \right) = 0.
\end{align*}

Let $\cS$ be one of the two orbits of size $q(q-1)/2$. Then
\begin{align*}
    & \cQ^+ \cap \cY = \{T_x \mid x \in \F_q\} \cup \{T_\infty = (0,0,0,0,1,\z^{q+1})\}, \\
    & \mbox{where } T_x = (\z^{q+1}, x^2+\z^{q+1}x+1, x^3+x, x^3+x, x^4+\z^{q+1}x^3+x^2, \z^{q+1}x^4).
\end{align*}
Let $\bar{\cQ}^+$ denote the quadratic extension of $\cQ^+$, and assume that $\langle (0,1,\z+1,\z+1,\z^2+1,0), (1,0,\z^2+1,\z^2+1,0,\z^4+1) \rangle \in \bar{\cR}_1$ and $\langle (0,1,\z+1,\z+1,\z^2+1,0), (1,0,\z^{2q}+1,\z^{2q}+1,0,\z^{4q}+1) \rangle \in \bar{\cR}_2$, where $\bar{\cR}_1$ and $\bar{\cR}_2$ are the reguli of $\bar{\cQ}^+$ obtained by extending $\cR_1$ and $\cR_2$. In this case $|\cL_{q+1} \cap \cR_1| = |\cL_2 \cap \cR_1| = |\cL_{q+1} \cap \cR_2| = |\cL_2 \cap \cR_2| = 0$. Arguing as before we find that the line joining $T_x$ and $T_\infty$ belongs to $\cL_3 \cap \cR_1$ if and only if there exists $x \in \F_q$ such that
\begin{align}
    & (\z+1)^2x^2 + \z^2(\z+1)x + \z^4+\z^2+1 = (\z+1)^2\left((x^2+\z^{q+1}x+(\z^{q+1}+1)^2\right) = 0. \label{eq7}   
\end{align}
Since $u^2+\z^{q+1}u+1 = u^2+(\z+\z^q)u+1 = (u+\z+1)(u+\z^q+1)$, then $\Tr_{q|2}\left(1+\frac{1}{\z^{2(q+1)}}\right) = \Tr_{q|2}(1) + \Tr_{q|2}(1/\z^{2(q+1)}) = \Tr_{q|2}(1) + 1$. Hence equation \eqref{eq7} has two or none solutions according as $q \equiv -1$ or $1 \pmod{3}$. Similarly, for a fixed $x \in \F_q$, the line joining $T_x$ and $T_y$ belongs to $\cL_3 \cap \cR_1$ if and only if there exists $y \in \F_q \setminus \{x\}$ such that
\begin{align}
    &\left((\z+1)^2x^2 + \z^2(\z+1)x + \z^4+\z^2+1\right)y^2 + \z^2 \left((\z+1)x^2+\z^2 x+ \z+1\right) y \nonumber \\
    & + \left((\z^4+\z^2+1) x^2 + \z^2(\z+1) x + (\z+1)^2 \right) = & \nonumber \\
    & (\z+1)^2 \left( \left(x^2+\z^{q+1}x+(\z^{q+1}+1)^2\right) y^2 + \z^{q+1}\left(x^2+\z^{q+1}x+1\right) y + (\z^{q+1}+1)^2 x^2 + \z^{q+1}x + 1 \right) \nonumber \\
    & = 0. \label{eq8}
\end{align}
Since $u^2+(x^2+\z^{q+1}x+1)u+\z^{q+1}(x^3+x) = u^2+(x^2+(\z+\z^q)x+1)u+(\z+\z^q)(x^3+x) = (u+(\z+\z^q)x)(u+x^2+1)$, then $\Tr_{q|2}\left(1+\frac{1}{\z^{2(q+1)}}+\frac{\z^{q+1}(x^3+x)}{(x^2+\z^{q+1}x+1)^2}\right) = \Tr_{q|2}(1) + \Tr_{q|2}(1/\z^{2(q+1)}) + \Tr_{q|2}(\z^{q+1}(x^3+x)/(x^2+\z^{q+1}x+1)^2) = \Tr_{q|2}(1) + 1 + 0$. Therefore equation \eqref{eq8} has two or none solutions according as $q \equiv -1$ or $1 \pmod{3}$. It follows that $|\cL_3 \cap \cR_1| = (q+1)/3$, $|\cL_1 \cap \cR_1| = 0$ and $|\cL_0 \cap \cR_1| = 2(q+1)/3$, if $q \equiv -1 \pmod{3}$, whereas $|\cL_3 \cap \cR_1| = |\cL_0 \cap \cR_1| = 0$ and $|\cL_1 \cap \cR_1| = q+1$, if $q \equiv 1 \pmod{3}$. Similar calculations show that no line joining $T_x$ and $T_\infty$ or $T_x$ and $T_y$ belongs to $\cL_3 \cap \cR_2$. Hence $|\cL_3 \cap \cR_2| = 0$, $|\cL_1 \cap \cR_2| = q+1$ and $|\cL_0 \cap \cR_2| = 0$.

If $\cS$ is one of the $q-2$ orbits with $|\cS| = (q^3-q)/2$ and $|\Sigma \cap \cX| = 0$, then 
\begin{align*}
    \cQ^+ \cap \cY = & \left\{T_x \mid x \in \F_q\right\} \cup \left\{T_\infty = \left(0,0,0,0,1, t^{q+1}/\z^{q+1}\right)\right\}, 
\end{align*}
where 
\begin{align*}
T_x = & \left( ( t^{q+1}(x^2+\z^{q+1}x+1), \z^{q+1}(x^4+t^{q+1}x^2+1), (x^3+x)(\z^{q+1}x^2+t^{q+1}x+\z^{q+1}), \right. \\
    & \left. (x^3+x)(\z^{q+1}x^2+t^{q+1}x+\z^{q+1}), \z^{q+1}x^2(x^4+t^{q+1}x^2+1), t^{q+1}x^4(x^2+\z^{q+1}x+1) \right),
\end{align*}
for some $t \in \F_{q^2} \setminus \left\{0, \z^2, \z^{2q}\right\}$, $(t+1)^{q+1} = 1$. Let $\bar{\cQ}^+$ denote the quadratic extension of $\cQ^+$, and assume that $\langle (0,1,\z+1,\z+1,\z^2+1,0), (1,0,t+1,t+1,0,t^2+1) \rangle \in \bar{\cR}_1$ and $\langle (0,1,\z+1,\z+1,\z^2+1,0), (1,0,t^{q}+1,t^{q}+1,0,t^{2q}+1) \rangle \in \bar{\cR}_2$, where $\bar{\cR}_1$ and $\bar{\cR}_2$ are the reguli of $\bar{\cQ}^+$ obtained by extending $\cR_1$ and $\cR_2$. In this case $|\cL_{q+1} \cap \cR_1| = |\cL_2 \cap \cR_1| = |\cL_{q+1} \cap \cR_2| = |\cL_2 \cap \cR_2| = 0$. The line joining $T_x$ and $T_\infty$ belongs to $\cL_3 \cap \cR_1$ if and only if there exists $x \in \F_q$ such that
\begin{align}
    \z^2(t+1)x^2+t^2(\z+1)x+\z^2t^2+\z^2+t^2 & = \z^2(t+1) \left(x^2 + \frac{t^{q+1}}{\z^{q+1}}x + \frac{t^2}{t^{q+1}} + \frac{t^{q+1}}{\z^2}\right) \nonumber \\
    & = \z^2(t+1) \left(x^2 + \frac{t^{q+1}}{\z^{q+1}}x + \frac{t^{2q}}{t^{q+1}} + \frac{t^{q+1}}{\z^{2q}}\right) = 0 \label{eq11}   
\end{align}
Equation \eqref{eq11} has two or none solutions according as $\Tr_{q|2}\left(\frac{\z^2}{t^{q+1}} \left(1 + \frac{\z^{2q}}{t^2}\right)\right)$ equals $0$ or $1$, respectively. Analogously, for a fixed $x \in \F_q$, the line joining $T_x$ and $T_y$ belongs to $\cL_3 \cap \cR_1$ if and only if there exists $y \in \F_q \setminus \{x\}$ such that
\begin{align}
    & \left(x^2 + \frac{t^{q+1}}{\z^{q+1}}x + \frac{t^2}{t^{q+1}} + \frac{t^{q+1}}{\z^2}\right) y^2 + \frac{t^{q+1}}{\z^{q+1}} \left(x^2+\z^{q+1}x+1\right) y + \left( \left( \frac{t^2}{t^{q+1}} + \frac{t^{q+1}}{\z^2} \right)x^2 + \frac{t^{q+1}}{\z^{q+1}}x + 1\right) \nonumber \\
    & = 0. \label{eq12}
\end{align}
Equation \eqref{eq12} admits a solution if and only if
\begin{align*}
    & \Tr_{q|2} \left( \frac{\frac{\z^{2(q+1)}}{t^{2(q+1)}} \left( x^2 + \frac{t^{q+1}}{\z^{q+1}}x + \frac{t^2}{t^{q+1}} + \frac{t^{q+1}}{\z^2} \right) \left( \left( \frac{t^2}{t^{q+1}} + \frac{t^{q+1}}{\z^2} \right)x^2 + \frac{t^{q+1}}{\z^{q+1}}x + 1\right)}{(x^2+\z^{q+1}+1)^2} \right) = 0.
\end{align*}
The line joining $T_x$ and $T_\infty$ belongs to $\cL_3 \cap \cR_2$ if and only if there exists $x \in \F_q$ such that
\begin{align*}
    & x^2 + \frac{t^{q+1}}{\z^{q+1}}x + \frac{t^{q+1}}{\z^2}+\frac{t^{q+1}}{t^2} = 0    
\end{align*}
which has solutions if and only if $\Tr_{q|2}\left(\frac{\z^{2(q+1)}}{t^{q+1}} \left(\frac{1}{\z^2} + \frac{1}{t^2}\right)\right)$ equals $0$, whereas, for a fixed $x \in \F_q$, the line joining $T_x$ and $T_y$ belongs to $\cL_3 \cap \cR_2$ if and only if there exists $y \in \F_q \setminus \{x\}$ such that
\begin{align*}
    & \left(x^2 + \frac{t^{q+1}}{\z^{q+1}}x + \frac{t^{q+1}}{t^2} + \frac{t^{q+1}}{\z^2}\right) y^2 + \frac{t^{q+1}}{\z^{q+1}} \left(x^2+\z^{q+1}x+1\right) y + \left( \left( \frac{t^{q+1}}{t^{2}} + \frac{t^{q+1}}{\z^2} \right)x^2 + \frac{t^{q+1}}{\z^{q+1}}x + 1 \right) \nonumber \\
    & = 0. 
\end{align*}
which has solutions if and only if
\begin{align*}
    & \Tr_{q|2} \left( \frac{\frac{\z^{2(q+1)}}{t^{2(q+1)}} \left( x^2 + \frac{t^{q+1}}{\z^{q+1}}x + \frac{t^{q+1}}{t^{2}} + \frac{t^{q+1}}{\z^2} \right) \left( \left( \frac{t^{q+1}}{t^{2}} + \frac{t^{q+1}}{\z^2} \right)x^2 + \frac{t^{q+1}}{\z^{q+1}}x + 1 \right)}{(x^2+\z^{q+1}+1)^2} \right) = 0.
\end{align*}
Finally observe that 
\begin{align*}
\Tr_{q|2}\left(\frac{\z^2}{t^{q+1}} \left(1 + \frac{\z^{2q}}{t^2}\right)\right) = 0 \iff \Tr_{q|2}\left(\frac{\z^{2(q+1)}}{t^{q+1}} \left(\frac{1}{\z^2} + \frac{1}{t^2}\right)\right) = 0.
\end{align*}
\end{proof}

The content of Theorem~\ref{AllReguli} is summarized in Tables \ref{q(q+1)/2-}, \ref{q(q+1)/2+}, \ref{q3-q+1}, \ref{q3-q-1}, \ref{q2-q2-1}, \ref{q2-q2+1}, \ref{q3q22}  and \ref{q3q20R1}.   

\bigskip 

\begin{minipage}{8cm}
\centering 
\begin{tabular}{|c|c|c|}
\hline 
$\quad$ & $\cR_1$ & $\cR_2$\\
\hline 
$\cL_{q+1}$ & $2$ & $0$ \\
$\cL_0$ & $0$ & $0$  \\
$\cL_1$ & $q-1$ & $0$  \\
$\cL_2$ & $0$ & $2$  \\
$\cL_3$ & $0$ & $q-1$ \\
\hline
\end{tabular} 
\captionof{table}{ $\vert \cS \vert =q(q+1)/2$, $q \equiv -1 \mod 3$}\label{q(q+1)/2-} \end{minipage}
\hspace{1cm}
\begin{minipage}{8cm}
\centering 
\begin{tabular}{|c|c|c|}
\hline 
$\quad$ & $\cR_1$ & $\cR_2$\\
\hline 
$\cL_{q+1}$ & $2$ & $0$ \\
$\cL_0$ & $2(q-1)/3$ & $0$  \\
$\cL_1$ & $0$ & $0$  \\
$\cL_2$ & $0$ & $2$  \\
$\cL_3$ & $(q-1)/3$ & $q-1$ \\
\hline
\end{tabular} 
\captionof{table}{ $\vert \cS \vert =q(q+1)/2$, $q \equiv 1 \mod 3$}\label{q(q+1)/2+} \end{minipage}

 \begin{minipage}{8cm}
\centering 
\begin{tabular}{|c|c|c|}
\hline 
$\quad$ & $\cR_1$ & $\cR_2$\\
\hline 
$\cL_{q+1}$ & $1$ & $0$ \\
$\cL_0$ & $(q+1)/3$ & $0$  \\
$\cL_1$ & $(q-2)/2$ & $q/2$  \\
$\cL_2$ & $1$ & $2$  \\
$\cL_3$ & $(q-2)/6$ & $(q-2)/2$ \\
\hline
\end{tabular} \captionof{table}{$\vert \cS \vert =q^3-q$, in the case $q\equiv -1 \mod 3$}\label{q3-q-1} 
\end{minipage}
  \hspace{1cm}
 \begin{minipage}{8cm}
\centering 
\begin{tabular}{|c|c|c|}
\hline 
$\quad$ & $\cR_1$ & $\cR_2$\\
\hline 
$\cL_{q+1}$ & $1$ & $0$ \\
$\cL_0$ & $(q-1)/3$ & $0$  \\
$\cL_1$ & $q/2$ & $q/2$  \\
$\cL_2$ & $1$ & $2$  \\
$\cL_3$ & $(q-4)/6$ & $(q-2)/2$ \\
\hline
\end{tabular} 
\captionof{table}{ $\vert \cS \vert =q^3-q$, in the case $q\equiv 1 \mod 3$}\label{q3-q+1}
\end{minipage}

\begin{center}
\begin{tabular}{|c|c|}
\hline 
$\quad$ & $\cR_i$, $i \in \{1, 2\}$ \\
\hline 
$\cL_{q+1}$ & $0$ \\
$\cL_0$ & $(2\u_i+6)/3$   \\
$\cL_1$ & $q-\u_i-3 $  \\
$\cL_2$ & $2 $   \\
$\cL_3$ & $\u_i/3$  \\
\hline
\end{tabular} \captionof{table}{ $\vert \cS \vert =(q^3-q)/2$ in the case $\vert \Sigma \cap \cX\vert=2$}\label{q3q22}
\end{center}
 
 \begin{minipage}{8cm}
\centering 
\begin{tabular}{|c|c|c|}
\hline 
$\quad$ & $\cR_1$ & $\cR_2$\\
\hline 
$\cL_{q+1}$ & $0$ & $0$ \\
$\cL_0$ & $2(q+1)/3$ & $0$  \\
$\cL_1$ & $0$ & $q+1$  \\
$\cL_2$ & $0$ & $0$  \\
$\cL_3$ & $(q+1)/3$ & $0$ \\
\hline
\end{tabular} \captionof{table}{  $\vert \cS \vert =q(q-1)/2$ in the case $q \equiv -1 \mod 3$}\label{q2-q2-1}
\end{minipage}
 \hspace{1cm}
 \begin{minipage}{8cm}
\centering 
\begin{tabular}{|c|c|c|}
\hline 
$\quad$ & $\cR_1$ & $\cR_2$\\
\hline 
$\cL_{q+1}$ & $0$ & $0$ \\
$\cL_0$ & $0$ & $0$  \\
$\cL_1$ & $q+1$ & $q+1$  \\
$\cL_2$ & $0$ & $0$  \\
$\cL_3$ & $0$ & $0$ \\
\hline
\end{tabular} \captionof{table}{ $\vert \cS \vert = q(q-1)/2$ in the case $q \equiv 1 \mod 3$}\label{q2-q2+1} 
\end{minipage}

\begin{center} 
\centering 
\begin{tabular}{|c|c|c|}
\hline 
$\quad$ & $\Tr_{q|2} \left( \frac{z^2}{t^{q+1}} \left(1 + \frac{z^{2q}}{t^2}\right) \right)=0 $ & $\Tr_{q|2} \left( \frac{z^2}{t^{q+1}} \left(1 + \frac{z^{2q}}{t^2}\right) \right)=1$\\
$\quad$ & $\cR_i$, $i \in \{1, 2\}$ & $\cR_i$, $i \in \{1, 2\}$ \\
\hline 
$\cL_{q+1}$ & $0$ & $0$ \\
$\cL_0$ & $2(\v_i+3)/3$ & $2\v_i/3$  \\
$\cL_1$ & $ q-\v_i-2$ & $ q-\v_i+1$  \\
$\cL_2$ & $0$ & $0$  \\
$\cL_3$ & $(\v_i+3)/3$ & $ \v_i/3$ \\
\hline
\end{tabular} \captionof{table}{ $\vert \cS \vert =(q^3-q)/2$ in the case $\vert \Sigma \cap \cX\vert=0$}\label{q3q20R1} 
\end{center}

\bigskip

{\footnotesize
\noindent\textit{Acknowledgments.}
This work was supported by the Italian National Group for Algebraic and Geometric Structures and their Applications (GNSAGA-- INdAM).
}

\end{document}